\newcommand\blfootnote[1]{
\begingroup
\renewcommand\thefootnote{}\footnote{#1}
\addtocounter{footnote}{-1}
\endgroup
}
\renewcommand{\star}{*}
\newcommand{\R}{\mathbb{R}}
\newcommand{\Z}{\mathbb{Z}} 
\newcommand{\NN}{\mathbb{N}}
\newcommand{\LL}{L}
\newcommand{\CC}{C}
\newcommand{\HH}{H}
\newcommand{\lt}{\left}
\newcommand{\rt}{\right}
\def\[{[\![}
\def\]{]\!]}
\newcommand{\dd}{{\rm{d}}}
\newcommand{\Supp}{{\rm{Supp}}}
\newcommand{\Id}{{\rm{I}}}
\renewcommand{\div}{{\rm div}}
\newcommand{\FF}{\mathcal{F}}
\newcommand{\loc}{ {loc} }
\newcommand{\comp}{c}
\newcommand{\per}{ {per }}
\newcommand{\Boule}{{ B}}
\newcommand{\Q}{{ Q}}
\def\Xint#1{\mathchoice
  {\XXint\displaystyle\textstyle{#1}}%
  {\XXint\textstyle\scriptstyle{#1}}%
  {\XXint\scriptstyle\scriptscriptstyle{#1}}%
  {\XXint\scriptscriptstyle\scriptscriptstyle{#1}}%
  \!\int}
\def\XXint#1#2#3{{\setbox0=\hbox{$#1{#2#3}{\int}$}
    \vcenter{\hbox{$#2#3$}}\kern-.5\wd0}}
\def\fint{\Xint-}
\renewcommand{\epsilon}{\varepsilon}
\renewcommand{\tilde}{\widetilde}
\newcommand{\langl}{\lt\langle}
\newcommand{\rangl}{\rt\rangle}
\newtheorem{theorem}{Theorem}
\newtheorem{corollary}{Corollary}
\newtheorem{lemma}{Lemma}
\newtheorem{lemma_appendix}{Lemma}[section]
\newtheorem{proposition}{Proposition}
\newtheorem*{proposition*}{Proposition}
\newtheorem{definition}{Definition}
\newtheorem*{theorem*}{Theorem}
\newtheorem*{lemma*}{Lemma}
\newtheorem*{corollary*}{Corollary}
\newtheoremstyle{TheoremNum}
{\topsep}{\topsep}
{\itshape}
{}
{\bfseries}
{.}
{ }
{\thmname{#1}\thmnote{ \bfseries #3}}
\theoremstyle{TheoremNum}
\theoremstyle{remark}
\newtheorem{remark}{Remark}
\newtheorem{Example}{Example}
\title{Quantitative homogenization for the case of an interface between two heterogeneous media}
\author{Marc Josien\footnote{Max-Planck-Institut f\"ur Mathematik in den Naturwissenschaften, Inselstrasse 22, 04103 Leipzig, Germany} \quad \& \quad Claudia Raithel\footnote{Technische Universit\"at Wien, Karlsplatz 13, 1040 Wien, Austria} 
\blfootnote{The authors gratefully acknowledge financial support from the Max Planck Institute for Mathematics in the Sciences. 
}
\blfootnote{The second author gratefully acknowledges partial support from the Austrian Science Fund (FWF), grants P30000, W1245, and F65.
}
}
\begin{document}

\maketitle
\begin{abstract}
In this article we are interested in quantitative homogenization results for linear elliptic equations in the non-stationary situation of a straight interface between two heterogenous media. This extends the previous work \cite{Josien_InterfPer_2018} to a substantially more general setting, in which the surrounding heterogeneous media may be periodic or random stationary and ergodic.
Our main result is a quantification of the sublinearity of a homogenization corrector adapted to the interface, which we construct using an improved version of the method developed in \cite{Fischer_Raithel_2017}. 
This quantification is optimal up to a logarithmic loss and allows to derive almost-optimal convergence rates.
\end{abstract}

\paragraph{Keywords:} homogenization, interfaces, correctors, Lipschitz estimates, convergence rate\\

\newpage
\tableofcontents

\newpage

\section{Introduction}

In this article we construct and estimate the growth rate of homogenization correctors associated to linear elliptic operators in divergence form in the context of a flat interface between two heterogenous media (see, \textit{e.g.}, Figure \ref{Fig_Examples}). It is a continuation of the previous work of the first author \cite{Josien_InterfPer_2018}, inspired by \cite{BLLcpde}, which studies the case of an interface between two periodic media. We refer the reader to \cite{Josien_InterfPer_2018}, which is more elementary than the present study. There, definitions for the homogenization correctors and $2$-scale expansion adapted to the interface are designed, motivated and proved to produce an accurate approximation of the solution of the multiscale problem.
Equipped with these algebraic definitions, we explore here a substantially broader framework, in which we do not assume any structure on the two surrounding heterogeneous media, but only that each of them admits a constant homogenized matrix and correctors with a controlled growth rate; such a framework could be applied for periodic, almost-periodic, or stochastic homogenization. Under these assumptions, we build adapted correctors satisfying suboptimal sublinearity estimates by taking advantage of the techniques developed in \cite{Fischer_Raithel_2017} by Fischer and the second author. In our main theorem, we use Green's function estimates to obtain an almost-optimal control of the growth rate of the correctors.  

\begin{figure}[h]
\begin{center}
\begin{minipage}{0.48\linewidth}
\includegraphics[width=\textwidth]{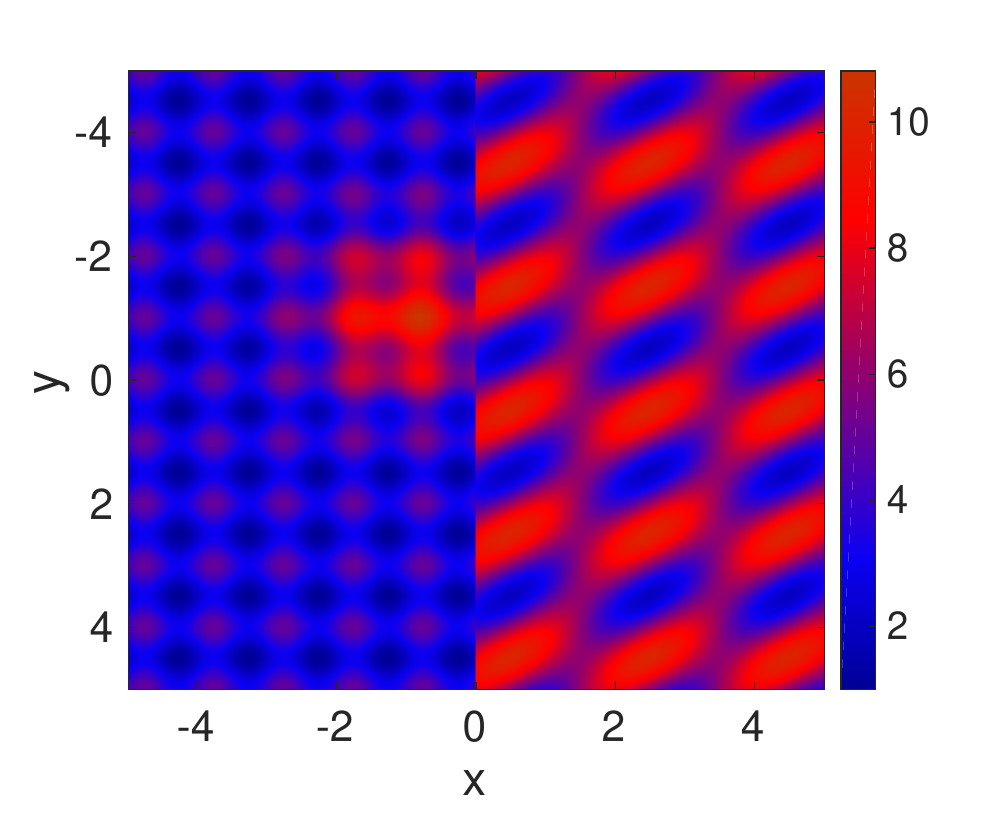}
\end{minipage}
\begin{minipage}{0.48\linewidth}
\includegraphics[width=\textwidth]{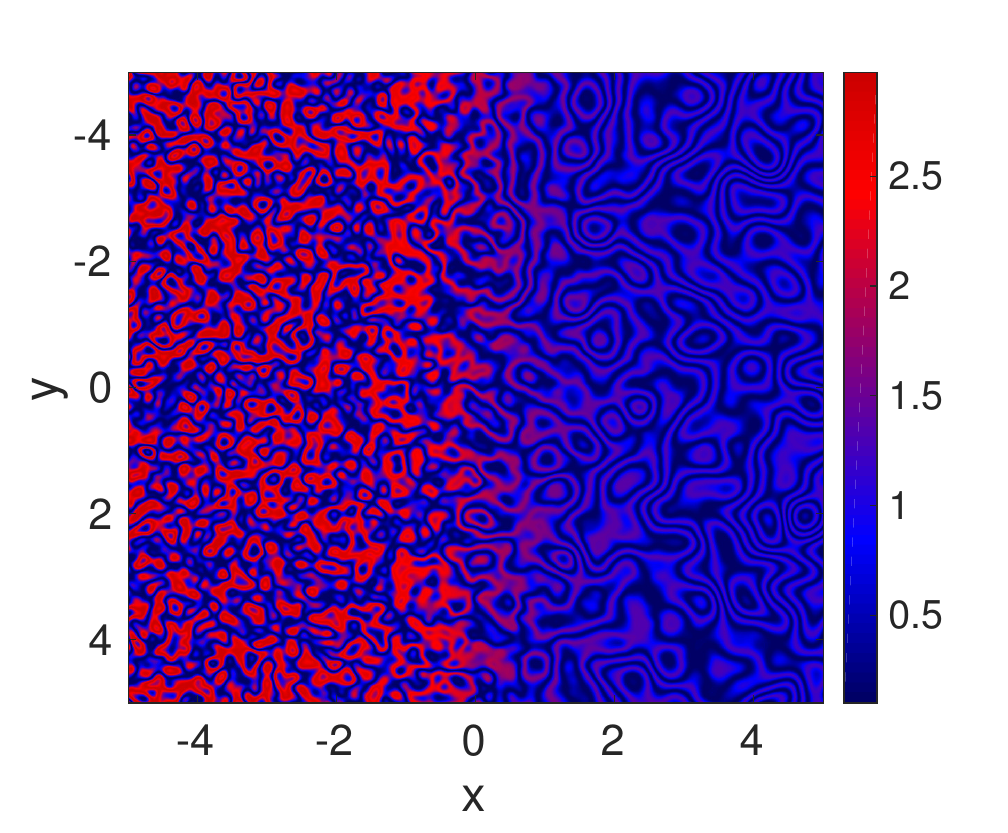}
\end{minipage}
\end{center}  
\caption{On the left, a sharp interface between two periodic media with a defect; on the right, a smooth interface between two random media generated from independent Gaussian fields. The colors indicate the value of $a$ (which is here assumed to be scalar).}
\label{Fig_Examples}
\end{figure}

\subsection{Motivation and related works}

We now motivate our study and discuss related results from the literature.

\paragraph{General theory of homogenization}
Consider a linear elliptic equation in divergence form:
\begin{equation}\label{DivAgrad0}
- \div \lt(a(x) \nabla u(x)\rt) = f(x).
\end{equation} 
Such equations play a central role in many branches of material physics; \textit{\textit{e.g.}} in elasticity, electrostatics, and thermostatics.
We refer to \cite{Allaire} for a didactic introduction to homogenization and its applications.
The coefficient field $a$ typically represents local characteristics of a sample: elasticity, electrical conductance, or thermal conductance (depending on the context). Here, as in the classical theory of homogenization, the coefficient $a$ is assumed to be varying at a characteristic small scale, which here is of order $1$ (by a change of variables). When this small scale vanishes (or equivalently, on infinitely large scales), equation \eqref{DivAgrad0} may be approximated by the following homogenized equation:
\begin{equation}\label{DivAgradbar0}
- \div \lt(\bar{a}(x) \nabla \bar{u}(x)\rt) = f(x),
\end{equation} 
where the so-called \textit{homogenized matrix} $\bar{a}$ is usually simpler that the original matrix $a$. 

In most works, the coefficient $a$ is assumed to have stationarity properties. Roughly speaking, the behavior of the medium is shift-invariant; \textit{e.g.} $a$ might be periodic \cite{Allaire}, almost-periodic \cite{Armstrong_Shen_2016}, or random stationary and ergodic \cite{JKO, Gloria_Neukamm_Otto_2015,Armstrong_book_2018}.
In those cases, the homogenized matrix $\bar{a}$ is constant. This is, in particular, shows that homogenization is an efficient tool for approximating \eqref{DivAgrad0}, which would be very costly  to solve numerically. Nevertheless, even though stationarity --in all its aforementioned expressions-- is a convenient mathematical tool, it may not always be a realistic hypothesis.

\paragraph{Beyond stationary coefficient fields}
Quite recently, in \cite{BLLcpde}, there was a deliberate attempt to study theoretically more general structures.
More precisely, two cases were proposed: The case of a defect in a periodic structure and the case of an interface between two periodic media.

The first case breaks stationarity, but only on the microscopic level, for the defect has no macroscopic impact (at least at the main order).
Thus, once the corrector is built and estimated \cite{BLLcpde,NewBLL1}, classical approaches in periodic homogenization (namely Avellaneda and Lin's \cite{AvellanedaLin}, later improved in \cite{KLSGreenNeumann}) are sufficient to obtain accurate convergence rates \cite{CRAS_Homog,Article_Homog}.

The second case not only breaks stationarity at the microscopic level, but also at the macroscopic level.
Indeed, the interface plays a role at any scale: The homogenized matrix $\bar{a}$ is generically piecewise constant with a discontinuity through the interface. Note that the book \cite[Chap.\ 9 p.\ 312]{Bakhvalov_Panasenko}, which predates \cite{BLLcpde,Josien_InterfPer_2018} and inspired \cite{Vinoles}, proposes another point of view on interfaces, with slightly different --however consistent-- definitions for the correctors and asymptotic expansion than we give below.

\paragraph{The case of an interface between periodic media}
The case of an interface requires adapted definitions of correctors  \cite{Josien_InterfPer_2018}.
These correctors $\phi_j$, for $j \in \[1,d\]$, are strictly sublinear (see \eqref{StricSub}) solutions to the following equation:
\begin{align}
\label{DefCorr*}
-\div \lt( a(x)  \nabla \lt( P_j(x) + \phi_j(x) \rt)\rt)=0 \quad\text{in}\quad \R^d,
\end{align}
where the piecewise affine functions $P_j$ span the space of non-constant and strictly subquadratic $\overline{a}$-harmonic functions\footnote{By the classical Liouville principle for piecewise constant coefficient fields, this space has dimension $d$.}. Namely, the functions $P_j$ solve      
\begin{equation}\label{Equat_P}
-\div\lt( \overline{a}(x) \nabla P_j(x)\rt)=0 \quad\text{in}\quad \R^d.
\end{equation}
In \cite{Josien_InterfPer_2018}, in a specific case of periodic media,  these correctors were actually built and an almost-optimal convergence rate for the gradient of the adapted $2$-scale expansion\footnote{Interestingly, such an expansion in only required when considering the gradient in the vicinity of the interface, which may be relevant in elasticity in the context of fractures. See Figure \ref{FigA}.},
\begin{align*}
\widetilde{u}:=\overline{u} + \phi \cdot \lt( \nabla P\rt)^{-1} \nabla \overline{u},
\end{align*}
was obtained. The techniques of \cite{Josien_InterfPer_2018}, however, were crucially based on some periodic structures of the underlying heterogeneous media.

\paragraph{The case of general interface}
In the current contribution we consider a more general case of flat interface between two media. 
We do not assume any joint structure on them, but only that each of them admits a constant homogenized matrix and correctors with a controlled growth rate.
Our main result is that the global medium-- which consists of the two heterogeneous  parts glued along the interface-- enjoys the \textit{same} quantitative homogenization properties as the two components, up to a logarithmic loss.
In particular, the growth rate of the global corrector is essentially bounded by the maximum of the growth rates of the correctors associated with each of the heterogeneous media (see Theorem \ref{PropRefinedDeter} below).

To obtain this result, we first rely on the approach of \cite{Fischer_Raithel_2017,Raithel_2017}, which construct correctors for the half-space with homogeneous Dirichlet or Neumann boundary conditions. These articles provide a robust way to build correctors for simple geometries, but with a suboptimal growth rate. Other than the existence of correctors on the whole space satisfying a weak quantified sublinearity condition,  there are no other structural assumptions made on the coefficient fields. Here, capitalizing on estimates for the heterogeneous Green's function provided by large-scale Lipschitz regularity, we prove an almost optimal growth rate. We remark that the strategy for proving the large-scale Lipschitz regularity is to transfer large-scale regularity properties from the homogenized to the heterogeneous problem --here we adapt the strategy of \cite{Gloria_Neukamm_Otto_2015}. However, since now the homogenized problem involves a piecewise continuous coefficient, we make use of the results of \cite{Vogelius_2000,LiNirenberg_2003}.

Last, as is classical in homogenization (see, \textit{e.g.} \cite{Gloria_Neukamm_Otto_2015} or the introductory course \cite{JosienOtto_2019}), our estimate for the growth rate of the correctors produces, in turn, a convergence rate on the level of the adapted $2$-scale expansion.

\subsection{Precise mathematical setting}\label{ensemble}

In this section we fix the model for a flat interface between two heterogeneous media that we will consider throughout this paper.

\paragraph{General notations}
Let $d$ be the dimension and $\lt(e_i\rt)_{i \in \[1,d\]}$ be the canonical basis of $\R^d$. In this paper we always assume that $d \geq 2$.
If $x\in \R^d$, we define
\begin{equation*}
x^\perp:=x\cdot e_1 \in \R \quad\text{and}\quad x^\parallel:=\lt(x \cdot e_2,\cdots, x \cdot e_d\rt) \in \R^{d-1},
\end{equation*}
so that $x=(x^\perp,x^\parallel)$.
If $R>0$, we denote by $\Q(x,R) \subset \R^d$ the cube of side length $R$ centered at $x$; also $\Boule(x,R) \subset \R^d$ is the ball of radius $R$ centered at $x$. If $x=0$, it might be omitted.

We highlight that throughout this paper we make use of the Einstein summation convention.

We say that a function $f$ is \textit{sublinear} if it satisfies the following condition:
\begin{align}\label{StricSub}
&\limsup_{r \uparrow \infty} \frac{1}{r} \lt( \fint_{\Boule(0,r)} \lt| f  -  \fint_{\Boule(0,r)}f  \rt|^2 \rt)^{\frac{1}{2}}< \infty.
\end{align}
It is said to be \textit{strictly sublinear} if the above limit is equal to $0$.

\paragraph{Definition of the interface}
We define a coefficient field $a$ by
\begin{align}
\label{Defa}
a(x)=
\lt\{
\begin{aligned}
&a_-(x) \quad\text{if}\quad x^\perp < -1,
\\
&a_\circ(x) \quad\text{if}\quad -1<x^\perp<1,
\\
&a_+(x) \quad\text{if}\quad x^\perp > 1.
\end{aligned}
\rt.
\end{align}
The interface is defined by $\mathcal{I}:=\{0\} \times \R^{d-1}$.
In our model, the thin layer $[-1,1] \times \R^{d-1}$ allows for a transition between the surrounding media represented by $a_\pm$.
Our running assumption on every coefficient field $a$ is that  they are uniformly elliptic and bounded; namely, there exists a fixed constant $\lambda>0$ such that, for every $x, \xi \in \R^d$, there holds:
\begin{align}
\label{Ellipticite}
\lambda\lt|\xi\rt|^2 \leq \xi \cdot a(x) \xi \quad\text{and}\quad  \lambda \lt|\xi\rt|^2 \leq \xi \cdot a(x)^{-1} \xi.
\end{align}
In order to describe random media, we assume that we have an ensemble $\langl \cdot \rangl$ on the space $\Omega$ (with the topology of H-convergence), which we define as follows:
\begin{align}
\label{Omega}
\Omega:= \lt\{ (a_+, a_-, a_\circ) \,  | \, a_{\pm}, a_\circ : \R^d \rightarrow \R^{d\times d}  \textrm{ satisfy } \eqref{Ellipticite} \rt\}.
\end{align}
In a deterministic case, the measure of the ensemble $\langl\cdot\rangl$ concentrates on one specific coefficient field.

Our first hypothesis is that the coefficient field $a$, $\langle \cdot \rangle$-almost surely, admits the following piecewise constant (deterministic) homogenized matrix $\bar{a}$
\begin{align}
\label{Defabar}
\overline{a}(x)=
\lt\{
\begin{aligned}
\overline{a}_+ \quad\text{if}\quad x^\perp >0,\\
\overline{a}_- \quad\text{if}\quad x^\perp <0.
\end{aligned}
\rt.
\end{align}
(By local properties of H-convergence, $\overline{a}_-$ and $\overline{a}_+$ depend only on $a_-$ and $a_+$ respectively.)
We also assume that, $\langl \cdot \rangl$-almost surely, there exist generalized homogenization correctors
\begin{align*}
\Phi_\pm:=\lt(\Phi_-,\Phi_+\rt)  \quad\text{for}\quad \Phi_-:=\lt(\phi_-,\phi^\star_-,\sigma_-,\sigma^\star_-\rt) \;\text{ and }\; \Phi_+:=\lt(\phi_+,\phi^\star_+,\sigma_+,\sigma^\star_+\rt).
\end{align*}
Here, $(\phi_+, \sigma_+)$ are strictly sublinear functions satisfying\footnote{Notice that our indexing convention for the flux corrector $\sigma$ defined below is different from \cite{Gloria_Neukamm_Otto_2015}.}
\begin{align}\label{Def_Corrpm}
-\div\lt(a_+ \lt( \nabla \lt(\phi_+\rt)_i + e_i\rt) \rt)=0 \quad\text{and}\quad \lt(\sigma_+\rt)_{ijk}:=\partial_i \lt(N_+\rt)_{jk}-\partial_j \lt(N_+\rt)_{ik},
\end{align}
in $\R^d$, where $\lt(N_+\rt)_{jk}$ is a strictly subquadratic solution of the following equation
\begin{align}
\label{Def_Npm}
\Delta \lt(N_+\rt)_{jk}=\lt(\overline{a}_+\rt)_{jk} -\lt(a_+\rt)_{jl} \lt(\delta_{lk} + \partial_l \lt(\phi_+\rt)_k \rt) \quad\text{in}\quad \R^d.
\end{align}
(The other correctors $\lt(\phi_-,\sigma_-\rt)$, $\lt(\phi_-^\star,\sigma_-^\star\rt)$ and  $\lt(\phi_+^\star,\sigma_+^\star\rt)$ correspondingly satisfy similar equations, where the coefficients fields $\lt(a_+,\bar{a}_+\rt)$ should be respectively replaced by $\lt(a_-,\bar{a}_-\rt)$, and the transposed coefficient fields $\lt(a_-^\star,\overline{a}_-^\star\rt)$ and $\lt(a_+^\star,\overline{a}_+^\star\rt)$.)

Our second and main hypothesis about the two heterogeneous media is that the correctors related to $a_{\pm}$ and $a^\star_{\pm}$ are strongly sublinear in the following annealed way:
\begin{equation}
\label{CorrSubDef} 
\begin{aligned}
&\displaystyle \sup_{x,y \in \R^d, |x-y| \leq r} \langl \left( \int_{\Q(0,1)}| \Phi_{\pm}(x+z)- \Phi_{\pm}(y+z)|^2 \dd z \right)^{\frac{p}{2}} \rangl^{\frac{1}{p}}
\leq c_p r^{1-\nu},
\end{aligned}
\end{equation}
for every $r>1$ and $p<\infty$, and for a given exponent $\nu \in (0,1]$ and a constant $c_p \geq 1$ (without loss of generality --by the H\"older inequality-- we may assume that the constants $c_p$ are increasing in $p$).

\begin{remark} 
While we assume that $a_{\pm}$ are coefficient fields on $\R^d$ with corresponding generalized homogenization correctors, it would suffice to have these coefficient fields and generalized correctors defined on $\R_{\pm} \times \R^{d-1}$ with an accordingly modified  assumption \eqref{CorrSubDef}. Also, the coefficient field $a_\circ$ might only be defined on the layer $[-1, 1] \times \R^{d-1}$. We define the space $\Omega$ by \eqref{Omega} for simplicity.
\end{remark}

\subsection{Definition of adapted correctors and $2$-scale expansion}

Following \cite{Josien_InterfPer_2018}, we introduce a basis for the space of strictly subquadratic $\overline{a}$-harmonic functions (see \eqref{Equat_P}): For $j \in [\![  1, d ]\!]$ we define
\begin{align}
\label{DefPj}
P_j(x)=P(x)\cdot e_j:=
\lt\{
\begin{aligned}
&x \cdot e_j && \quad\text{if}\quad x^{\perp}<0, \\
&x \cdot e_j + \frac{\lt(\overline{a}_-\rt)_{1j}-\lt(\overline{a}_+\rt)_{1j}}{\lt(\overline{a}_+\rt)_{11}} x\cdot e_1&& \quad\text{if}\quad x^{\perp}>0,
\end{aligned}
\rt.
\end{align}
where the bottom line corresponds to the transmission condition through the interface. This prompts us to seek homogenization correctors and flux correctors satisfying the following definition:
\begin{definition}[Generalized Correctors]\label{DefGenCorr}
We define the \textit{generalized correctors} $(\phi,\sigma)$ associated to a coefficient field $a$ of the form \eqref{Defa} as follows.
The correctors $\phi_j$, for $j \in \[1,d\]$, are strictly sublinear solutions to the following equation:
\begin{align}
\label{DefCorr}
-\div \lt( a  \nabla \lt( P_j + \phi_j \rt)\rt)=0 \quad\text{in}\quad \R^d.
\end{align}
Simultaneously, the  flux correctors $\sigma_{ijk}$, for $i, j, k \in \[1,d\]$ are defined as
\begin{align}
\label{Def2_Pot}
\sigma_{ijk}:=\partial_i N_{jk}-\partial_j N_{ik},
\end{align}    
where $N_{jk}$ is a strictly subquadratic solution of the following equation:
\begin{align}
\label{Def_N1}
\Delta N_{jk}=\overline{a}_{jl}  \partial_l P_k -a_{jl} \lt(\partial_l P_k + \partial_l \phi_k \rt) \quad\text{in}\quad \R^d.
\end{align}
\end{definition}

At this point we make an important distinction: Notice that \eqref{Def2_Pot} and \eqref{Def_N1} imply that the flux corrector $\sigma$ satisfies the familiar identity \cite[(7)]{Gloria_Neukamm_Otto_2015}
\begin{align}\label{Def_Pot}
\partial_i \sigma_{ijk} =\overline{a}_{jl}  \partial_l P_k -a_{jl} \lt(\partial_l P_k + \partial_l \phi_k \rt) \quad\text{in}\quad \R^d
\end{align}
along with the skew-symmetry constraint
\begin{align}
\label{PotSym}
\sigma_{ijk}=-\sigma_{jik}.
\end{align}
It turns out that the two latter identities are sufficient for many purposes (\textit{e.g.} to obtain large-scale Lipschitz estimates, that is Theorem \ref{ThAL} below).
Functions $\sigma^{\rm u} $ that are strictly sublinear and satisfy \eqref{Def_Pot} and \eqref{PotSym} we call \textit{ungauged flux correctors}; we use the superscript ``${\rm u}$'' to indicate it. 
The main difference between the gauged and ungauged flux correctors is that, in contrast to the gauged flux correctors of Definition \ref{DefGenCorr}, the ungauged flux correctors are not unique, which becomes an issue in the proof of Theorem \ref{PropRefinedDeter}.

In our setting with the interface we need a modification of the standard $2$-scale expansion, that is:
\begin{align}
\label{2scale}
\widetilde{u}:=\overline{u} + \phi \cdot \lt( \nabla P\rt)^{-1} \nabla \overline{u}.
\end{align}
With this definition of $\widetilde{u}$ we find that
\begin{align}
\label{Algebre}
-\div\lt(a \cdot \nabla \lt(u-\widetilde{u}\rt) \rt) =\partial_i \lt( \lt(a_{ij} \phi_k-\sigma_{ijk}\rt)\partial_j \overline{\partial}_k \overline{u} \rt),
\end{align}
where
\begin{align}\label{DefUUU}
\overline{\nabla}\overline{u}:=\lt( \nabla P \rt)^{-1}  \nabla \overline{u} \quad \quad \textrm{and} \quad \quad \overline{\partial}_k \overline{u}:= e_k \cdot  \overline{\nabla}\overline{u}.
\end{align}
The motivation for \eqref{2scale} and the detailed calculation leading to \eqref{Algebre} lie in \cite[Section 3.3]{Josien_InterfPer_2018}. We underline that the function $\overline{\nabla} \overline{u}$ defined in \eqref{DefUUU} is continuous through the interface: Thus, its gradient $\nabla \overline{\nabla} \overline{u}$ lies in $\LL^\infty_{\loc}\lt(\R^d\rt)$, so that the terms on the right-hand side of \eqref{Algebre} are well-defined  (see Lemma \ref{LemRegUbar} below).

\subsection{Theorem \ref{PropRefinedDeter}: Main result}
\label{main_result}

The main contribution of this article is the following:
\begin{theorem}\label{PropRefinedDeter}
Let $d \geq 2$ and $\langl \cdot \rangl$ be an ensemble on $\Omega$ defined in \eqref{Omega} that satisfies the conditions given in Section \ref{ensemble}. Then, $\langle \cdot \rangle$-almost surely there exists a unique (up to addition of a random constant\footnote{We use here the quite paradoxical words ``random constant'' to design a random field that is constant in space.}) generalized corrector $(\phi, \sigma)$ associated to $a$ such that for every $\nu_0 < \nu$ and $2 \leq p<\infty$ the following relations hold for any $r>0$:
\begin{align}\label{CorrSubNu}
\begin{aligned}
&  \sup_{x,y \in \R^d, |x-y| =r} \langl \left(  \int_{\Boule(0,1)}  \lt|\phi(x+z)-\phi(y+z)\rt|^2 \dd z \right)^{\frac{p}{2}} \rangl^{\frac{1}{p}} 
\\
& \hspace{1cm} \lesssim_{d, \lambda, \nu, \nu_0, p}    c^{d/\nu_0 +1}_{2pd/\nu}      \lt\{
\begin{aligned}
&\lt(1+ r \rt)^{1-\nu}  &&\quad\text{if}\quad \nu<1,
\\
& \ln(2+r)  &&\quad\text{if}\quad \nu=1,
\end{aligned}
\rt.
\end{aligned}
\end{align}
and
\begin{align}
\nonumber
&  \sup_{x,y \in \R^d, |x-y| =r} \langl  \lt(\int_{\Boule(0,1)} \lt|\sigma(x + z)-\sigma(y + z) \rt|^2 \dd z \rt)^{\frac{p}{2}}  \rangl^{\frac{1}{p}}      \\
& \hspace{1cm} \lesssim_{d, \lambda, \nu, \nu_0,  p}c^{d/\nu_0 +1}_{2pd/\nu}
\lt\{
\begin{aligned}
&\lt(1+r\rt)^{1-\nu} \ln (2+r )  &&\quad\text{if}\quad \nu<1,
\\
& \ln^3(2+r)  &&\quad\text{if}\quad \nu=1.
\end{aligned}
\rt.
\label{CorrSubNuSig}
\end{align}
\end{theorem}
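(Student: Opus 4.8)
The plan is to construct the adapted corrector $(\phi,\sigma)$ in two stages, following the strategy outlined in the introduction: first produce an \emph{ungauged} corrector with merely the suboptimal sublinearity coming from the Fischer--Raithel-type construction, and then use large-scale regularity (Green's function estimates) to bootstrap the growth rate down to the almost-optimal one quantified in \eqref{CorrSubNu}--\eqref{CorrSubNuSig}. For the first stage, I would write $\phi_j = \phi^{\rm bulk}_j + \psi_j$, where $\phi^{\rm bulk}_j$ is glued together from the whole-space correctors $\phi_\pm$ of $a_\pm$ composed with the affine maps $P_j$ (so that it handles the medium correctly far from the interface, using the transmission condition baked into \eqref{DefPj}), and $\psi_j$ is a correction supported near the interface solving an equation of the form $-\div(a\nabla\psi_j) = -\div(g_j)$ with a right-hand side $g_j$ that is localized around $\mathcal I$ and inherits the sublinearity \eqref{CorrSubDef} of $\Phi_\pm$. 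Solving this with the half-space/interface technique of \cite{Fischer_Raithel_2017,Raithel_2017} (which only needs the weak quantified sublinearity \eqref{CorrSubDef} of the whole-space correctors and no further structure) yields a first corrector satisfying a sublinearity bound of the type $r^{1-\nu'}$ for some $\nu' < \nu$, together with the companion ungauged flux corrector $\sigma^{\rm u}$ satisfying \eqref{Def_Pot}--\eqref{PotSym}. The annealed $L^p$ framework of \eqref{CorrSubDef} is propagated through these linear solvability statements by standard Caccioppoli/energy estimates and the Calder\'on--Zygmund-type bounds available in this setting.

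The second stage is where the almost-optimal rate is earned, and it is the main obstacle. The point is that the large-scale Lipschitz estimate (Theorem \ref{ThAL} in the excerpt) gives control on the heterogeneous Green's function $G(x,y)$ of $-\div(a\nabla\cdot)$ and its gradients $\nabla_x G$, $\nabla_x\nabla_y G$ on scales $\gtrsim 1$ — but with the twist that the homogenized operator here has a \emph{piecewise constant} coefficient $\bar a$, so one cannot quote off-the-shelf periodic-homogenization Green's function bounds. Instead one transfers regularity from the homogenized problem using \cite{Vogelius_2000,LiNirenberg_2003} (piecewise-smooth coefficients across an interface) as input to the Gloria--Neukamm--Otto-type argument. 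Granting these Green's function bounds, I would represent the \emph{increment} $\phi(x) - \phi(y)$ (more precisely its local $L^2$ average, as in the statement) via the representation formula $\nabla\phi_k = -\int \nabla_x\nabla_y G(x,y)\,(\text{flux})(y)\,dy$ plus boundary/far-field contributions, and then estimate the resulting convolution against the sublinearity \eqref{CorrSubDef} of the \emph{data}. The gain from $1-\nu'$ to the sharp $1-\nu$ (respectively to $\ln r$ when $\nu=1$) comes exactly from the decay rate of $\nabla_x\nabla_y G$ encoded by the Lipschitz estimate — this is the standard "the corrector is as good as the coefficient field" mechanism, and the borderline case $\nu = 1$ produces the logarithm because the relevant integral $\int_1^r t^{-1}\,dt$ is logarithmically divergent. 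The dyadic bookkeeping of the annealed norms, keeping track of how the constant $c_p$ enters (one loses a factor in $p$ each time one applies the minimal-radius bound from large-scale regularity, whence the exponent $c^{d/\nu_0+1}_{2pd/\nu}$ and the small loss $\nu_0<\nu$), is the technical heart of the estimate.

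Finally, for $\sigma$ I would argue analogously but note two extra complications reflected in the weaker bound \eqref{CorrSubNuSig}. First, $\sigma$ is obtained from $N$ via \eqref{Def2_Pot}, i.e. by differentiating the solution of a Poisson equation \eqref{Def_N1} whose right-hand side already contains $\nabla\phi$, so the flux corrector sees the corrector's increment convolved once more against the Newtonian kernel — this is what produces the extra $\ln(2+r)$ factor (and $\ln^3$ when $\nu=1$, the cube coming from iterating the borderline logarithmic loss through the $\phi$-estimate, the Poisson solve, and the gauge fixing). Second, one must pass from the non-unique ungauged $\sigma^{\rm u}$ of the first stage to the \emph{gauged} $\sigma$ of Definition \ref{DefGenCorr}: the difference $\sigma - \sigma^{\rm u}$ is itself $\div$-free and skew-symmetric, hence representable through a potential, and one shows this gauge correction is harmless for the increment estimate. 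Uniqueness of $(\phi,\sigma)$ up to an additive random constant then follows from a Liouville-type argument: any two such correctors differ by a strictly sublinear $a$-harmonic function whose gradient, by the large-scale Lipschitz estimate applied on all scales, must vanish — so the difference is constant, and the corrector is determined once we fix, say, its $\langle\cdot\rangle$-mean to be zero.
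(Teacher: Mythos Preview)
Your proposal is correct and follows essentially the same route as the paper: a bulk-plus-layer decomposition of $\phi$ with the layer correction built by the Fischer--Raithel iteration (Proposition~\ref{PropExistCorr}), then a bootstrap of the growth rate via the annealed mixed-derivative Green's function bounds coming from the large-scale Lipschitz estimate (Theorem~\ref{ThAL} combined with Lemmas~\ref{moments} and~\ref{LemGreenDet}), with the passage from the ungauged $\sigma^{\rm u}$ to the gauged $\sigma$ handled by Lemma~\ref{enforce_gauge_1} and uniqueness by Liouville. The only point to sharpen is that the Green's function representation is applied to the layer correction $\tilde\phi$ (your $\psi$), whose source $g$ is supported in the strip $[-3,3]\times\R^{d-1}$, yielding a pointwise decay $|\nabla\tilde\phi(x)|\lesssim (1+|x^\perp|)^{-\nu}$; the increment $\phi(x)-\phi(y)$ is then obtained not directly from a representation formula but by integrating this gradient bound along a path that detours far from the interface (Step~3 of the paper's proof), which is precisely what produces the logarithm when $\nu=1$.
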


\noindent Above and in the sequel, the symbol ``$\lesssim_\delta$'' reads ``$\leq C$, for a constant $C$ depending only on the tuple $\delta$ of previously defined parameters'' (for simplicity, throuhgout the course of the proofs, the subscript might be omitted).

In words, as previously advertised, we learn from Theorem \ref{PropRefinedDeter} that the correctors adapted to the interface enjoy the same quantified sublinearity properties as the correctors on the left and on the right of the interface (possibly up to a logarithmic correction). We emphasize that we do not assume any structure of the coefficients on the left and on the right. However, the sublinearity property \eqref{CorrSubDef} usually comes from an underlying structure such as, in the deterministic case, periodicity, quasi-periodicity, or periodicity perturbed by a defect and, in the stochastic case, stationarity and quantitative ergodicity assumptions; \textit{e.g.} a log-Sobolev inequality or a spectral gap \cite{Gloria_Neukamm_Otto_2015}. As a consequence, Theorem \ref{PropRefinedDeter} may be applied in various frameworks, as illustrated in Section \ref{SecExample} below.

Theorem \ref{PropRefinedDeter} is a bit surprising in light of \cite{BLLcpde,Josien_InterfPer_2018} since we do not assume any structural relationship between the coefficients $a_\pm$.
Indeed, in \cite[Prop.\ 5.4]{Josien_InterfPer_2018} the coefficients $a_\pm$ have a common periodic cell in the directions of the interface $\mathcal{I}$ and in \cite[Th.\ 5.7]{BLLcpde} a diophantine condition relating the periods of the coefficients $a_\pm$ is assumed.
The more general statement in Theorem \ref{PropRefinedDeter}, however, does come at a cost. In particular, defining the glued composite correctors 
\begin{align}\label{DefVecphi}
\check{\phi}(x):=
\lt\{
\begin{aligned}
\phi_+(x) \quad\text{if}\quad x^{\perp} >0,\\
\phi_-(x) \quad\text{if}\quad x^{\perp} <0
\end{aligned}
\rt.
\quad\text{and}\quad 
\check{\sigma}(x):=
\lt\{
\begin{aligned}
\sigma_+(x) \quad\text{if}\quad x^{\perp} >0,\\
\sigma_-(x) \quad\text{if}\quad x^{\perp} <0,
\end{aligned}
\rt.
\end{align}
we remark that the estimates in \cite[Prop.\ 5.4]{Josien_InterfPer_2018} provide an exponential decay of $\nabla \phi(x) - \nabla \check{\phi}(x)$ (and accordingly of $\nabla \sigma(x) - \nabla \check{\sigma}(x)$) in the distance to the interface $|x^\perp|$. In contrast, our methods used to prove Theorem \ref{PropRefinedDeter} only yield a decay as the inverse of this distance. 

Following our proof of Theorem \ref{PropRefinedDeter}, we show that enforcing a structural assumption between the two surrounding media may lead to stronger estimates than \eqref{CorrSubNu} or \eqref{CorrSubNuSig}, and not only in the periodic case \cite{Josien_InterfPer_2018}. In particular, we prove in Theorem \ref{ThIndepGauss} that, in a special stochastic setting where both heterogeneous media are \textit{independently} generated from two Gaussian fields with integrable correlation functions, we obtain the optimal growth rates of the correctors. 
Namely, all the stochastic moments of the generalized corrector are uniformly bounded in $\R^d$. 

As a counterpart to Theorem \ref{ThIndepGauss}, we then justify that, under the assumptions outlined in Section \ref{ensemble}, the rate \eqref{CorrSubNu} is optimal. (The only non-obvious case is $\nu=1$.) In particular, in Proposition \ref{PropContrex} we give an example of coefficients $a_\pm$ that admit bounded correctors and a uniformly elliptic and bounded $a_\circ$ such that the global corrector for the medium with interface displays a logarithmic growth.\footnote{Nevertheless, we do not claim that the exponent in the logarithm of \eqref{CorrSubNuSig} is optimal.} 

We lastly underline that, apart from boundedness and uniform ellipticity, no further assumptions are imposed on $a_\circ$ inside the layer of width 2 along the interface. This is indeed a zone that we need to ``sacrifice'' in the proof of Theorem \ref{PropRefinedDeter} because of our use of cut-off functions --we cannot take advantage of any good behavior of $a$ in this zone, but we also do not suffer from any bad behavior. As can be seen in \eqref{CorrSubNu}, the presence of this zone does not worsen the growth rate when $\nu<1$, but its influence is felt when $\nu =1$. (This is also apparent in Proposition \ref{PropContrex}.) In the terminology of \cite{BLLcpde}, the layer $a_\circ$ could be seen as a \textit{defect}  which is only in $\LL^\infty(\R^d)$, but not in any $\LL^r(\R^d)$ for $r<\infty$. (It is restricted to a layer of infinite Lebesgue measure, although it appears as ``microscopic'' when zooming out.)

\subsection{Applicability and examples}\label{SecExample}

In this section, we discuss the assumption \eqref{CorrSubDef} and propose three different simple, but representative, examples of interfaces between heterogeneous media that satisfy it.

In the case that $\nu =1$ the assumption \eqref{CorrSubDef} is quite well-motivated. In particular, if $a_{\pm}$ are  both periodic, then the corresponding generalized correctors are bounded and \eqref{CorrSubDef} holds with $\nu=1$.  Of course, the assumption \eqref{CorrSubDef} is less common for $\nu < 1$. It has, however, been shown that such a growth rate naturally arises when studying periodic media perturbed by a defect that is quite spread (see Example \ref{Example_PerDef} below). In particular, this situation was studied in \cite{Article_Homog,CRAS_Homog}. Also, in stochastic homogenization, taking a general random field satisfying a log-Sobolev inequality (see \cite[Th.\ 3]{Gloria_Neukamm_Otto_2015}) produces \eqref{CorrSubDef} with an exponent $\nu$ depending on the parameter of the log-Sobolev inequality and the dimension $d$.

\medskip

The first example is totally deterministic:
\begin{Example}\label{Example_PerDef}
The matrices $a_\pm$ represent periodic media perturbed by defects:
\begin{align*}
a_- = a_{\per,-} + \tilde{a}_- \quad\text{and}\quad a_+ = a_{\per,+} + \tilde{a}_+,
\end{align*}
where the coefficient fields $a_{\per,\pm}$ are both periodic (with possibly different periods) and H\"older continuous.
Moreover, the defects are localized in the sense of $\tilde{a}_\pm \in \LL^{\infty}\lt(\R^d\rt) \cap \LL^r\lt(\R^d\rt)$, for $r \in [1,\infty)$, and uniformly H\"older continuous.
The coefficient fields $a_\pm$, $a_{\per,\pm}$ satisfy \eqref{Ellipticite}.
There is no layer, in the sense of:
\begin{equation}\label{NoLayer}
a_\circ(x):=
\lt\{
\begin{aligned}
&a_-(x) \quad\text{if}\quad x^{\perp} <0,
\\
&a_+(x) \quad\text{if}\quad x^{\perp} >0.
\end{aligned}
\rt.
\end{equation} 
\end{Example}
In such a case, by \cite[Th.\ 4.1]{BLLcpde}, \eqref{CorrSubDef} is satisfied for $\nu:=\min\lt(1,\frac{d}{r}\rt)$ if $r \neq d$ (the special case $r=d$ can be treated in a suboptimal way by artificially increasing $r$), and for a trivial ensemble $\langl\cdot\rangl$.
Such an example is illustrated on the left-hand side of Figure \ref{Fig_Examples}, and might be a realistic model for an interface between two crystals.

The second and third examples are stochastic:

\begin{Example}\label{Ex_2GaussIndep}
Let $d \geq 2$, $d_{\rm g} \geq 1$, $0<\lambda<1$ and $\kappa>0$.
Let $c_-$, $c_+$ and $c_\circ : \R^d \rightarrow \R^{d_{\rm g} \times d_{\rm g}}$ be covariance functions such that their Fourier transforms satisfy
\begin{align}
\label{Hypoc_bis}
\lt|\FF c_-(k) \rt| + \lt|\FF c_+(k) \rt| + \lt| \FF c_\circ(k)\rt| \leq \kappa \lt(1 + |k|\rt)^{-d-2\alpha},
\end{align}
for any $k \in \R^d$, and for a given exponent $\alpha>0$, and let the deterministic matrix-valued functions $A_-$, $A_+$, $A_\circ : \R^{d_{\rm g}} \rightarrow \R^{d\times d}$ be such that each element in the range of $A_-$, $A_+$, and $A_\circ$ satisfies \eqref{Ellipticite}.
Assume that these functions are uniformly Lipschitz continuous, \textit{\textit{i.e.}},
\begin{align}\label{HypoA_bis}
\lt\|\nabla A_-\rt\|_{\LL^{\infty}}
+\lt\|\nabla A_+\rt\|_{\LL^{\infty}}
+\lt\|\nabla A_\circ\rt\|_{\LL^{\infty}}
\leq \kappa.
\end{align}

The coefficient fields $a_+$, $a_-$ and $a_\circ$ are generated from independent vectorial stationary Gaussian fields $g_-, g_+$ and $g_\circ : \R^d \rightarrow \R^{d_{\rm g}}$ with correlation functions $c_-$, $c_+$ and $c_\circ$ in the following sense:
\begin{align}\label{Def_a_Ex}
a_-(x):=A_-(g_-(x)),  \quad a_+(x):=A_+(g_+(x)) \quad\text{and}\quad a_\circ(x):=A_\circ(g_\circ(x)).
\end{align}
We denote by $\langl\cdot\rangl$ the ensemble induced by the joint laws of $g_-$, $g_+$ and $g_\circ$.
\end{Example}
By \cite[Prop.\ 3.2]{JosienOtto_2019} (see also \cite{Gloria_Neukamm_Otto_2015}), estimate \eqref{CorrSubDef} is satisfied for $\nu=1$ in Example \ref{Ex_2GaussIndep}. Of course, it might be more realistic to make use of the layer coefficient $a_\circ$ to have a smooth transition between the two surrounding media as in Figure \ref{Fig_Examples}.

\medskip 

There is no need to assume independence between all the media. Instead, there might be a total correlation (here by reflection) between the medium on the left and the medium on the right:
\begin{Example}
This example is similar to Example \ref{Ex_2GaussIndep}. The only difference is that we set
\begin{equation*}
a_-(x):=A_-(g_-(x)), \quad a_+(x):=a_-(-x) \quad\text{and}\;\; a_\circ(x) \text{ is defined by \eqref{NoLayer}}.
\end{equation*}
instead of \eqref{Def_a_Ex}.
\end{Example}

\begin{remark}
From a practical point of view, it may happen that correctors related to some heterogeneous materials can be computed numerically.
Thus, condition \eqref{CorrSubDef} would be easier to check than a structure assumption.
\end{remark}

\subsection{Outline}

The article is organized as follows:
In Section \ref{SecResult}, we sketch the main steps leading to Theorem \ref{PropRefinedDeter}, discussing some technical aspects.
Then, we state a few additional results: We deduce from Theorem \ref{PropRefinedDeter} an almost-optimal convergence rate for the two-scale expansion; we also provide an example of interface, where the rate \eqref{CorrSubNu} is attained; and, in a special stochastic case, we get a slightly better growth rate for the generalized correctors.
The Sections  \ref{section_4} - \ref{Sec_Proof_ThIndepGauss} are devoted to the proofs.
Namely, Sections \ref{section_4}, \ref{sec:Prop_1} and \ref{section_6} contain the proof of Theorem \ref{PropRefinedDeter}, each of them corresponding to an intermediate result, whereas Sections \ref{Sec_proof_ThCvgDet}, \ref{Sec_Proof_PropContrex} and \ref{Sec_Proof_ThIndepGauss} contain the proofs of the additional results.
Last, we state and prove in Appendix \ref{A} a useful result on harmonic functions.

\section{Strategy of proof and additional results}\label{SecResult}

\subsection{Strategy for the proof of Theorem \ref{PropRefinedDeter}}

We go through the following sequence of steps: First, in Theorem \ref{ThAL}, we assume access to a strictly sublinear generalized ungauged corrector (see Definition \ref{DefGenCorr}) and obtain an averaged Lipschitz estimate for $a$-harmonic functions above some minimal radius $r^\star>0$. Then, in Proposition \ref{PropExistCorr}, we show that, assuming the existence of generalized correctors $\Phi_{\pm}$ corresponding to $a_{\pm}$ satisfying \eqref{CorrSubDef}, we can construct the generalized ungauged corrector needed as input in Theorem \ref{ThAL}. Therefore, we obtain a large-scale Lipschitz estimate for $a$-harmonic functions. In turn, the latter yields annealed estimates for the first and second mixed derivatives of the Green's function associated to $- \div \left( a \nabla\right)$ (as shown in \cite{BellaGiunti_2018}). These Green's function estimates are a main ingredient to get the almost-optimal growth rates in Theorem \ref{PropRefinedDeter}. 
Their use is complemented by Lemma \ref{enforce_gauge_1}, in which we go from the ungauged flux corrector that comes out of Proposition \ref{PropExistCorr} to a unique (up to addition of a random constant) flux corrector satisfying the same sublinearity properties, and Lemma \ref{moments}, in which we control the moments of the minimal radius $r^*$. 

\subsection{Theorem \ref{ThAL}: Large-scale Lipschitz estimate}

Our Theorem \ref{ThAL} generalizes the previous result \cite[Th.\ 4.1]{Josien_InterfPer_2018} by adapting the proof of \cite[Lem.\ 2]{Gloria_Neukamm_Otto_2015}.
It takes as input strictly sublinear ungauged generalized correctors and yields a large-scale Lipschitz estimate for $a$-harmonic functions. 
The method in \cite{Gloria_Neukamm_Otto_2015} is inspired by the earlier work of Avellaneda and Lin in the setting of periodic coefficients \cite{AvellanedaLin}.
The main idea is to transfer regularity properties from the constant-coefficient homogenized operator to the heterogenous operator at large scales.
In our case, to overcome the discontinuity of the homogenized matrix at the interface, we need to use the modified $2$-scale expansion \eqref{2scale}.

We use the convention that the \textit{excess of an $a$-harmonic function on the ball of radius $r > 0$  centered around $x_0 \in \R$} is given by:
\begin{align}
\label{DefExcess}
\mathcal{E}(x_0,r)[u]=\inf_{\xi \in \R^d} \fint_{\Boule(x_0,r)} \lt|\nabla u - \lt( \nabla P + \nabla \phi \rt)\cdot \xi \rt|^2 .
\end{align}
For this definition of the excess we obtain the following large-scale regularity result:

\begin{theorem}\label{ThAL}
Assume that the coefficient field $a$ has the form \eqref{Defa} and satisfies \eqref{Ellipticite}, the homogenized matrix $\overline{a}$ has the form \eqref{Defabar}, and the $\overline{a}$-harmonic coordinates are defined by \eqref{DefPj}. We let $\lt(\phi,\sigma^{\rm u} \rt)$ denote an associated generalized ungauged corrector. Then, for any Hölder exponent $\alpha \in (0,1)$, there exists a constant $\delta=\delta(d,\lambda,\alpha)$ such that the following properties hold:

Let $x_0 \in \R^d$ and $r_{\max}>r^\star>0$.
Assume that $(\phi,\sigma^{\rm u} )$ satisfy the sublinearity condition
\begin{align}
\label{CorrSub}
\sup_{r \in [r^\star,r_{\max}]} \frac{1}{r} \lt( \fint_{\Boule(x_0,r)} \lt| (\phi,\sigma^{\rm u} ) - \fint_{\Boule(x_0,r)} (\phi,\sigma^{\rm u} ) \rt|^2 \rt)^{\frac{1}{2}} \leq \delta.
\end{align}
Then, for $R\in[r^\star,r_{\max}]$ and a function $u$ that is $a$-harmonic in $\Boule(x_0,R)$, we have that
\begin{align}\label{Borne_Excess}
\mathcal{E}(x_0,r)[u] \leq \delta^{-1} \lt(\frac{r}{R}\rt)^{2\alpha} \mathcal{E}(x_0,R)[u]
\end{align}
for any $r \in [r^\star,R]$, where the excess $\mathcal{E}$ is defined by \eqref{DefExcess}.
Moreover, the correctors have the following non-degeneracy property:
\begin{align}\label{NonDegen}
\delta \lt|\xi\rt|^2 \leq \fint_{\Boule(x_0,r)} \lt| \nabla P\cdot \xi + \nabla \phi \cdot \xi \rt|^2  \leq \delta^{-1} \lt|\xi\rt|^2
\end{align}
for any $\xi \in \R^d$ and $r \in [r^\star,r_{\max}/2]$.
Finally, the following large-scale Lipschitz estimate holds for any $r^\star \leq r \leq R \leq r_{\max}$:
\begin{align}\label{LargeLip}
\fint_{\Boule(x_0,r)} \lt|\nabla u\rt|^2  \leq \delta^{-1} \fint_{\Boule(x_0,R)} \lt|\nabla u\rt|^2.
\end{align}
\end{theorem}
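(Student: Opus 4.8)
The plan is to adapt the Avellaneda–Lin/Gloria–Neukamm–Otto excess-decay scheme to the interface geometry, using the modified two-scale expansion $\widetilde{u} = \overline{u} + \phi\cdot(\nabla P)^{-1}\nabla\overline{u}$ in place of the usual one. The core of the argument is a one-step improvement lemma: there is $\theta\in(0,1)$ (depending on $d,\lambda,\alpha$) and a smallness threshold $\delta$ such that, whenever \eqref{CorrSub} holds and $u$ is $a$-harmonic on $\Boule(x_0,R)$ with $R\ge r^\star$, one has $\mathcal{E}(x_0,\theta R)[u] \le \tfrac12\theta^{2\alpha}\mathcal{E}(x_0,R)[u]$. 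Iterating this on the dyadic scales $\theta^k R \ge r^\star$ and interpolating between consecutive scales yields \eqref{Borne_Excess}. The Lipschitz bound \eqref{LargeLip} then follows from \eqref{Borne_Excess} together with the non-degeneracy \eqref{NonDegen} by the standard "excess controls the energy" argument: write $\fint_{\Boule(x_0,r)}|\nabla u|^2 \lesssim \mathcal{E}(x_0,r)[u] + |\xi_r|^2$ where $\xi_r$ is the minimizer in \eqref{DefExcess}, control the increments $|\xi_r - \xi_R|$ by a telescoping sum of $\mathcal{E}(x_0,\theta^k R)^{1/2}$ (which is summable because of the geometric decay), and use \eqref{NonDegen} to convert $|\xi|^2$ back into an energy on the largest scale.

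**For the one-step lemma itself** I would proceed as follows. Given $u$ $a$-harmonic on $\Boule(x_0,R)$, let $\overline{u}$ solve $-\div(\overline{a}\nabla\overline{u})=0$ on $\Boule(x_0,R/2)$ with the same boundary data in an $H^1$ sense (or the Dirichlet problem with $u$'s trace), and form $\widetilde{u}$ via \eqref{2scale}. By the algebraic identity \eqref{Algebre}, $-\div(a\nabla(u-\widetilde{u}))$ equals a divergence of terms built from $(a\phi - \sigma^{\rm u})$ times $\nabla\overline{\nabla}\,\overline{u}$; a Caccioppoli/energy estimate then bounds $\fint_{\Boule(x_0,R/4)}|\nabla(u-\widetilde{u})|^2$ by $(\delta^2 + \text{harmonic-approximation error})\cdot\fint_{\Boule(x_0,R/2)}|\nabla u|^2$, where the $\delta^2$ comes from \eqref{CorrSub} controlling the oscillation of $(\phi,\sigma^{\rm u})$ on scale $R$. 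On the homogenized side, the key regularity input is $C^{1,\alpha}$ estimates for $\overline{a}$-harmonic functions \emph{up to and across the interface}: this is exactly where the piecewise-constant structure of $\overline{a}$ forces us to invoke the transmission-problem regularity results of Vogelius et al.\ / Li–Nirenberg (the functions $P_j$ of \eqref{DefPj} are precisely the affine solutions of the transmission problem, so "the tangent plane at the interface" must be taken in the $\nabla P\cdot\xi$ sense). This gives $\mathcal{E}_{\overline{a}}(x_0,\theta R)[\overline{u}] \le C\theta^{2\alpha}\,\fint_{\Boule(x_0,R/2)}|\nabla\overline{u}|^2$ for the homogenized excess. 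Combining: $\mathcal{E}(x_0,\theta R)[u] \lesssim \mathcal{E}(x_0,\theta R)_{\text{via }\widetilde{u}} \lesssim (\theta^{2\alpha} + \delta^2\theta^{-d-2\alpha} + \text{H-approx})\,\fint_{\Boule(x_0,R/2)}|\nabla u|^2$, and closing the loop requires re-expressing $\fint|\nabla u|^2$ in terms of $\mathcal{E}(x_0,R)[u]$ plus a lower-order piece — this last re-expression is the reason one proves \eqref{NonDegen} and \eqref{Borne_Excess} simultaneously by an induction on scales rather than separately.

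**The non-degeneracy \eqref{NonDegen}** I would obtain by a soft argument: the upper bound $\fint_{\Boule(x_0,r)}|\nabla P\cdot\xi + \nabla\phi\cdot\xi|^2 \le \delta^{-1}|\xi|^2$ follows by testing the corrector equation \eqref{DefCorr} against $P_j\xi_j + \phi_j\xi_j$ minus its average and using Caccioppoli together with the sublinearity \eqref{CorrSub} to absorb the $\phi$-contribution; the lower bound follows because $\fint_{\Boule(x_0,r)}\nabla(P\cdot\xi + \phi\cdot\xi) \to \nabla P\cdot\xi$ "on average" as the $\phi$-oscillation is $\le\delta r$, while $\nabla P\cdot\xi$ itself is bounded below uniformly (by \eqref{DefPj} and ellipticity of $\overline{a}$, since $|\nabla P\cdot\xi|\gtrsim|\xi|$), so for $\delta$ small a Poincaré/Jensen argument gives $\fint|\nabla(P+\phi)\cdot\xi|^2 \ge \tfrac12|\nabla P\cdot\xi|^2 \gtrsim |\xi|^2$.

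**The main obstacle** is the interior-vs-interface dichotomy in the harmonic approximation step: when $\Boule(x_0,R)$ is far from $\mathcal{I}$ the classical constant-coefficient $C^{1,\alpha}$ theory applies and the proof is the standard GNO one, but when $\Boule(x_0,R)$ straddles $\mathcal{I}$ one genuinely needs the transmission regularity of \cite{Vogelius_2000,LiNirenberg_2003}, and one must check that the excess functional \eqref{DefExcess} — which is built with $\nabla P + \nabla\phi$ rather than with affine functions — is the correct object that makes the decay estimate scale-invariant across the interface. A secondary technical point is bookkeeping the width-$2$ transition layer $[-1,1]\times\R^{d-1}$: since no good structure is assumed on $a_\circ$ there, the cutoffs in the Caccioppoli estimates and in the construction of $\overline{u}$ must be arranged so that the bad layer contributes only to constants and to the minimal radius $r^\star$, not to the decay exponent — which is consistent with $r^\star$ appearing as a free lower cutoff in the statement.
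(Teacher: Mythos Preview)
Your overall strategy is correct and matches the paper's: a one-step excess improvement via the modified two-scale expansion \eqref{2scale} and the algebraic identity \eqref{Algebre}, followed by iteration on dyadic scales; non-degeneracy via Caccioppoli (upper) and Poincar\'e (lower); and the Lipschitz estimate from excess decay plus non-degeneracy by telescoping the minimizers $\xi_r$.

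There are, however, a few places where the paper's execution differs from yours and is cleaner. First, the paper's one-step lemma (Lemma~\ref{LemCore}) bounds $\mathcal{E}(x_0,r)[u]$ by $\big((r/R)^2 + \delta^{2\epsilon}(R/r)^{d+2}\big)\fint_{\Boule(x_0,R)}|\nabla u|^2$, and then in the proof of the theorem one simply applies this to $u-(P+\phi)\cdot\xi$ (which is still $a$-harmonic) to replace the right-hand side by $\mathcal{E}(x_0,R)[u]$; so no simultaneous induction of \eqref{Borne_Excess} and \eqref{NonDegen} is needed. Second, to control the homogenization error $w=u-\overline{u}-\eta\phi\cdot\overline{\nabla}\overline{u}$ the paper does \emph{not} use a plain Caccioppoli estimate but a \emph{weighted} energy estimate (Lemma~\ref{LemWeight}, with weight $(1-|x|)^\beta$ and a Hardy inequality): this absorbs the boundary layer where the cutoff $\eta$ is active and yields the extra exponent $\epsilon$ on $\delta$. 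Your Caccioppoli route would need careful handling of $\nabla\phi$ at the boundary of the Dirichlet problem. Third, the transmission regularity you invoke from \cite{Vogelius_2000,LiNirenberg_2003} is in fact proved directly in the paper (Lemma~\ref{LemRegUbar}) for the piecewise-constant $\overline{a}$, by differentiating tangentially and using the equation to recover normal derivatives; so no black-box $C^{1,\alpha}$ result is needed. Finally, your concern about bookkeeping the width-$2$ layer $a_\circ$ is misplaced here: Theorem~\ref{ThAL} takes the generalized corrector and the sublinearity condition \eqref{CorrSub} as \emph{inputs}, and nothing in its proof distinguishes the layer from the rest of the coefficient field; the layer only enters in the construction of the corrector (Proposition~\ref{PropExistCorr}).
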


Notice that, due to the presence of the interface, $a(\cdot + x_0)$ does not have the same structure as $a$ if $x_0 \neq 0$ and we may, therefore, not assume that $x_0 = 0$ as in \cite{Gloria_Neukamm_Otto_2015}. 

\begin{remark}[Liouville theorem]
Theorem \ref{ThAL} implies that the space of $a$-harmonic functions $u \in \HH^1_{\loc}\lt(\R^d\rt)$ that are strictly subquadratic in the sense that there exists $\alpha \in (0,1)$ such that 
\begin{align*}
\lim_{R \uparrow \infty} R^{-(1+ \alpha)} \lt( \fint_{\Boule(0,R)} \lt|u -\fint_{\Boule(0,R)} u \rt|^2 \rt)^{\frac{1}{2}} =0,
\end{align*}
is of dimension $d+1$. More precisely, such functions $u$ can be written as
\begin{align*}
u(x)= c + b \cdot \lt( P(x) + \phi(x)\rt) \quad \text{ for constants } c \in \R, b \in \R^d.
\end{align*}
\end{remark}

\subsection{Proposition \ref{PropExistCorr}: Construction of generalized \textit{ungauged} correctors}
\label{state_correctors}

For the construction of the generalized ungauged corrector $(\phi,\sigma^{\rm u} )$ that we take as input for Theorem \ref{ThAL}, we adapt the method used in \cite{Fischer_Raithel_2017, Raithel_2017} to construct Dirichlet and Neumann correctors. The general iterative scheme of \cite{Fischer_Raithel_2017, Raithel_2017} was first introduced to build higher order correctors in \cite{Fischer_Otto_2015}.  

In \cite{Fischer_Raithel_2017, Raithel_2017, Fischer_Otto_2015} it is sufficient to assume existence of a whole-space (first order) corrector satisfying a quantified sublinearity condition. 
For simplicity, here, we restrict ourselves to a slightly less general sublinearity condition\footnote{An inspection of the proof of Proposition \ref{PropExistCorr} should convince the reader that the result, in fact, holds under the direct analogue of the quantified sublinearity condition \cite[(11)]{Fischer_Raithel_2017}. }.
Indeed, we assume that there exist whole-space correctors $\lt(\phi_-,\sigma_-\right)$ and $\left(\phi_+,\sigma_+\rt)$ associated with $a_{\pm}$ such that 
\begin{align}
\label{Condition_delta}
\begin{split}
&\frac{1}{r} \lt(\fint_{\Boule(x_0,r)} \lt| \lt(\phi_-,\sigma_-,\phi_+,\sigma_+\rt) \rt|^2  \rt)^{\frac{1}{2}} \leq r^{-\nu} \qquad \text{for any} \quad  r  \geq 1,
\end{split}
\end{align}
for a given exponent $\nu \in (0,1]$ and $x_0 \in \R^d$. Note that if \eqref{CorrSubDef} is satisfied, then, $\langle \cdot \rangle$-almost surely, for any $x_0 \in \mathbb{R}^d$ there exist $\lt(\phi_-,\sigma_-\right)$ and $\left(\phi_+,\sigma_+\rt)$ such that \eqref{Condition_delta} holds (up to a uniform multiplicative constant).

The basic strategy of the construction we use here is to iteratively, on increasingly large scales, correct the glued composite correctors $(\check{\phi},\check{\sigma})$ defined in \eqref{DefVecphi}. The intuition is that far from the interface $\lt(\nabla \phi,\nabla \sigma^{\rm u} \rt)$ should behave like $\lt(\nabla \phi_\pm \cdot \nabla P,\nabla\sigma_\pm \cdot\nabla P\rt)$ on the right/left. This naturally leads to the ansatz:
\begin{align}\label{Defsw}
\phi_k  		=(1-\chi) \check{\phi}_j \partial_j P_k 		+ \tilde{\phi}_k, &&
\sigma^{\rm u} _{ijk} 	=(1-\chi) \check{\sigma}_{ijl} \partial_l P_k 	+ \tilde{\sigma}_{ijk},
\end{align}
where the function $\chi$ is smooth, equals $1$ on a narrow layer along the interface (containing the interface layer $[-1,1] \times \R^{d-1}$), and vanishes far from the interface (it will specified precisely in Section \ref{SecEquatConst}). The functions $\tilde{\phi}$ and $\tilde{\sigma}$ correspond to layer corrections along the interface.  

More formally, we decompose $\R^d$ into dyadic annuli and solve the corrector equations \eqref{DefCorr} and \eqref{Def_N1} in the associated increasing balls by using the ansatz \eqref{Defsw}. We thus obtain a sequence $\left\{ \lt(\phi^M, \sigma^{{\rm u}, M}\rt) \right\}_{M \in \mathbb{N}}$ of ``local generalized ungauged correctors''. An induction argument yields the convergence of this sequence. Indeed, by appealing to the large-scale Lipschitz estimate of Theorem \ref{ThAL} in the $M$\textsuperscript{th} step, we ascertain a sublinearity estimate on the local generalized ungauged correctors $\lt(\phi^M,\sigma^{{\rm u },M}\rt)$. This latter property is then used in the $M+1$\textsuperscript{th} step in order to invoke Theorem \ref{ThAL} again. 
Last, by these sublinearity estimates, we find that $\lt(\phi^M,\sigma^{{\rm u},M}\rt)$ converges to solutions $(\phi,\sigma^{\rm u} )$ of \eqref{DefCorr} and \eqref{Def_Pot} on the whole-space $\R^d$.

Using the above strategy, we obtain the following result:

\begin{proposition}\label{PropExistCorr}
Let $a$ be defined in \eqref{Defa} and satisfy \eqref{Ellipticite}, and $\overline{a}_\pm$ be the constant homogenized matrices associated with $a_\pm$.
Assume that there exist generalized correctors $\lt(\phi_-,\sigma_-\right)$ and $\left(\phi_+,\sigma_+\rt)$ associated with $a_{\pm}$ such that \eqref{Condition_delta} holds for $\nu \in (0,1]$ and $x_0 \in \R^d$. 

Then, there exists a generalized ungauged corrector $\lt(\phi,\sigma^{\rm u} \rt)$ associated with the coefficient field $a$ that satisfies
\begin{align}
\label{SublinSSop_2}
\frac{1}{r} \lt(\fint_{\Boule(x_0,r)} \lt| \lt(\phi,\sigma^{\rm u} \rt) - \fint_{\Boule(x_0,1)} \lt(\phi,\sigma^{\rm u} \rt) \rt|^2  \rt)^{\frac{1}{2}} \leq \kappa r^{-\tilde{\nu}} \quad \text{for any} \quad r  \geq 1,
\end{align}
for the exponent $\tilde{\nu}:=\nu/3$, and a constant $\kappa$ depending on $d, \lambda$ and $ \nu$.
\end{proposition}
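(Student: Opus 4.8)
The plan is to implement the iterative construction sketched before the statement, making the induction on dyadic scales rigorous. First I would fix the cutoff: for each $M \in \NN$ let $\chi_M$ be a smooth function equal to $1$ on $\{|x^\perp| \le 2^M\}$ (in particular on the interface layer $[-1,1]\times\R^{d-1}$) and vanishing on $\{|x^\perp| \ge 2^{M+1}\}$, with $|\nabla \chi_M| \lesssim 2^{-M}$ and $|\nabla^2 \chi_M| \lesssim 2^{-2M}$. Plugging the ansatz \eqref{Defsw} into the corrector equations \eqref{DefCorr} and \eqref{Def_N1} on the ball $\Boule(x_0, 2^{M})$ (say with vanishing average and tangential-type boundary conditions so that the problems are well-posed via Lax--Milgram), one gets elliptic equations for $(\tilde{\phi}^M, \tilde{\sigma}^M)$ whose right-hand sides are supported where $\nabla\chi_M \ne 0$ (i.e., in the annulus $2^M \le |x^\perp| \le 2^{M+1}$) and are built from $\check\phi$, $\check\sigma$, $\nabla P$ and their products; energy estimates then bound the $\HH^1$ norm of $(\tilde{\phi}^M,\tilde{\sigma}^M)$ on the ball by the size of $(\check\phi,\check\sigma)$ on that annulus, which by hypothesis \eqref{Condition_delta} is $\lesssim 2^{M(1-\nu)}$. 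This produces a ``local generalized ungauged corrector'' $(\phi^M,\sigma^{{\rm u},M})$ solving the equations exactly in $\Boule(x_0,2^M)$, with the averaged sublinearity bound $\frac1r(\fint_{\Boule(x_0,r)}|(\phi^M,\sigma^{{\rm u},M}) - (\cdot)|^2)^{1/2} \lesssim r^{-\tilde\nu}$ at scale $r = 2^M$, for $\tilde\nu$ to be chosen.

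Second, I would run the induction that upgrades scale-$2^M$ control to control at all scales $r \le 2^M$ simultaneously, which is where Theorem \ref{ThAL} enters. The point is that $u := P_k + \phi^M_k$ is $a$-harmonic in $\Boule(x_0, 2^M)$; once the sublinearity smallness condition \eqref{CorrSub} is met at scale $\delta$ (which requires absorbing the constant $\kappa$ and the exponent, forcing the loss from $\nu$ to $\tilde\nu = \nu/3$ — see below), the excess-decay estimate \eqref{Borne_Excess} and the large-scale Lipschitz estimate \eqref{LargeLip} give Campanato-type control of $\nabla\phi^M$ on all intermediate balls. Combined with the non-degeneracy \eqref{NonDegen} and Poincaré, this converts the single-scale bound into a bound of the form \eqref{SublinSSop_2} for $\phi^M$ uniformly in $r\in[1,2^M]$. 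The difference $(\phi^{M+1} - \phi^M, \sigma^{{\rm u},M+1}-\sigma^{{\rm u},M})$ is $a$-harmonic (resp. harmonic) on $\Boule(x_0, 2^M)$ and is itself controlled, using the large-scale Lipschitz estimate applied at the two consecutive scales, by a geometric factor times $2^{M(1-\tilde\nu)}$; summing the telescoping series gives Cauchy-ness of $(\phi^M,\sigma^{{\rm u},M})$ in $\HH^1_{\rm loc}$, hence a limit $(\phi,\sigma^{\rm u})$ solving \eqref{DefCorr} and \eqref{Def_Pot} on all of $\R^d$, and the uniform-in-$M$ bound \eqref{SublinSSop_2} passes to the limit. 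One must also check the skew-symmetry \eqref{PotSym} is preserved in the limit, which is immediate since it holds for each $\sigma^{{\rm u},M}$ by construction.

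The main obstacle — and the reason for the threefold loss $\tilde\nu = \nu/3$ rather than $\tilde\nu = \nu$ — is the bookkeeping in the inductive step: at scale $2^M$ the error term coming from the cutoff commutator lives only on the annular shell $2^M\le|x^\perp|\le 2^{M+1}$, but after solving the layer-correction equation the correction $\tilde\phi^M$ spreads over the whole ball $\Boule(x_0,2^M)$, and one pays: (i) a factor from elliptic regularity / trace estimates in passing from the shell to the ball; (ii) a factor from invoking \eqref{CorrSub}, which must hold with the fixed small $\delta$, so one cannot afford the borderline exponent and must leave room in the geometric summation; and (iii) a factor in controlling $\nabla^2 N$-type quantities entering $\sigma$. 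Each of these costs a fraction of $\nu$, and tracking them carefully through the dyadic sum while keeping the constant $\kappa$ dependent only on $d,\lambda,\nu$ is the delicate part. I would organize this as: (a) a clean single-scale lemma (energy estimate for the layer correction plus Caccioppoli), (b) the induction hypothesis stated as a bound of the form $\frac1r(\fint_{\Boule(x_0,r)}|(\phi^M,\sigma^{{\rm u},M}) - \text{avg}|^2)^{1/2} \le \kappa r^{-\tilde\nu}$ for $r\in[1,2^M]$, and (c) closing it using Theorem \ref{ThAL} at step $M$ to license Theorem \ref{ThAL} at step $M+1$, exactly as in the adaptation of \cite{Fischer_Raithel_2017, Fischer_Otto_2015}, with the interface handled throughout by the modified two-scale geometry $P+\phi$ rather than affine functions.
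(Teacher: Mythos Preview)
Your proposal has a genuine gap in the cutoff geometry, and this is precisely where the exponent $\tilde\nu = \nu/3$ comes from.

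First, the claim that the right-hand side for $\tilde\phi^M$ is supported only where $\nabla\chi_M \ne 0$ is false. Plugging the ansatz $\phi_k = (1-\chi_M)\check{\phi}_j\partial_j P_k + \tilde\phi_k$ into \eqref{DefCorr} and using $\div(\overline{a}\nabla P_k)=0$ leaves, on the region $\{\chi_M = 1\}$, the bulk term $\div\big((a-\overline{a})\nabla P_k\big)$, which is generically nonzero. In the paper's derivation this is the term $\chi_m(a_{ij}-\overline{a}_{ij})\partial_j P_k$ in \eqref{Defg}, supported on all of $\Supp(\chi_m)$, not merely on $\Supp(\nabla\chi_m)$. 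With your layer width $\sim 2^M$ equal to the ball radius, this bulk term alone contributes $\int|g^M|^2 \gtrsim (2^M)^d$, so the energy estimate yields only $\fint_{\Boule(x_0,2^M)}|\nabla\tilde\phi^M|^2 \lesssim 1$ --- no sublinearity at all.

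The actual origin of $\tilde\nu = \nu/3$ is a single optimization of the layer width $L(r)$ against the radius $r$. The two contributions to $\int|g^m|^2$ are (a) the bulk term $\chi_m(a-\overline{a})\nabla P$, of size $\lesssim r_m^{d-1}L(r_m)$ (the layer volume), and (b) the commutator terms involving $(\check{\phi},\check{\sigma})\,\nabla\chi_m$, of size $\lesssim r_m^{d+2(1-\nu)}L(r_m)^{-2}$ (using \eqref{Condition_delta} and $|\nabla\chi_m| \lesssim L(r_m)^{-1}$). Balancing these forces $L(r_m) = r_m^{1-2\nu/3}$ --- the ``trumpet'' of \eqref{DefLpm} --- and gives $\int|g^m|^2 \lesssim r_m^{d-2\nu/3}$, whence $\fint_{\Boule(x_0,r_m)}|\nabla\tilde\phi^m|^2 \lesssim r_m^{-2\nu/3}$. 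This is the sole source of the exponent loss; your itemized explanation (a trace loss, a loss to the $\delta$-smallness in \eqref{CorrSub}, a loss in controlling $\nabla^2 N$) is not what happens. The flux corrector inherits the same rate via Lemma \ref{LemDelta} with no further loss, and the smallness \eqref{CorrSub} is arranged by taking the initial radius $r_0$ large (see \eqref{defnr0}), not by sacrificing exponent.

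Secondarily, the paper does not solve boundary value problems on balls. Each $\tilde\phi^m$ is the Lax--Milgram solution on all of $\R^d$ to \eqref{Defwm} with compactly supported $g^m$ living in the dyadic shell $\Boule(x_0,r_m)\setminus\Boule(x_0,r_{m-2})$; consequently $\tilde\phi^m$ is $a$-harmonic in $\Boule(x_0,r_{m-2})$, and the induction uses the large-scale Lipschitz estimate \eqref{LargeLip} of Theorem \ref{ThAL} --- licensed by the already-built $(\phi^M,\sigma^{M,\rm u})$ --- to propagate the energy bound on $\nabla\tilde\phi^m$ from scale $r_{m-2}$ down to smaller scales (this is \eqref{Estimphim0}), after which summing in $m$ gives \eqref{Estimphim1bis}.
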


\begin{remark}
Proposition \eqref{PropExistCorr} also applies if the input correctors are ungauged.
\end{remark}

Notice that the sublinearity condition \eqref{Condition_delta} involves an anchoring point $x_0 \in \mathbb{R}^d$. As we will see in Section \ref{SecEquatConst}, this affects the definition of the cut-off functions defining the various local generalized ungauged correctors. In particular, in the method that we have described above, the successively large annuli were implicitly centered at $0$. Of course, the output $(\phi, \sigma^{\rm u} )$ of the proposition apriori depends on the anchoring point $x_0$. However, we see in Section \ref{SecUnique0} that the gradient of the corrector $\nabla \phi$ is unique and thus independent of $x_0$, whereas $\nabla \sigma^{\rm u} $ generally depends on $x_0$ (because $\sigma^{\rm u} $ is ungauged and, therefore, not unique up to the addition of a random constant).

As already observed in \cite{Fischer_Raithel_2017} for the boundary correctors, the estimate \eqref{SublinSSop_2} is suboptimal in terms of the exponent $\tilde{\nu}$. Actually, even if the generalized correctors $\lt(\phi_\pm,\sigma_\pm\rt)$ were uniformly bounded, optimizing this method would only upgrade \eqref{SublinSSop_2} to the exponent $\tilde{\nu}=1/2$ (whereas one may hope for $\tilde{\nu}=1$). The non-optimality of this estimate is an inherent feature of the method, which relies on energy estimates to capture the "smallness" of the layer around the interface. This strategy is predestined to be suboptimal: Indeed, the normalized $\LL^2$-energy corresponding to the layer around the interface and inside a ball of radius $r$ scales like $r^{-1/2}$, whereas the normalized $\LL^1$-norm of the same domain scales like $r^{-1}$.  In particular, we formally have that 
\begin{align*}
\lt(\fint_{\Boule(0,r)} \lt(\mathds{1}_{[-1,1] \times \R^{d-1}}\rt)^2  \rt)^{\frac{1}{2}} 
\simeq r^{-1/2} \gg r^{-1} \simeq  \fint_{\Boule(0,r)} \mathds{1}_{[-1,1] \times \R^{d-1}} \quad\text{for}\quad r \gg 1.
\end{align*}
Thus, the energy norm is not the best way to account for the smallness of the layer. This observation advocates for using more refined tools, namely estimates for the mixed gradient of the Green's function. The latter will transfer the $\LL^1$ optimal bound corresponding to the layer to an $\LL^\infty$ bound for the growth rate of the generalized correctors (up to logarithmic losses). This remark is at the core of the proof of Theorem \ref{PropRefinedDeter} and a key observation of this paper.

\subsection{Theorem \ref{ThIndepGauss}: Improved rates via independence}

To demonstrate the price that we pay in Theorem \ref{PropRefinedDeter} due to a lack of joint structure assumptions on the media, we take a closer look at the situation of Example 2 in Section \ref{SecExample}. Here, a relationship between the two media and the layer is enforced by assuming independence of their laws. As an analogue of \cite[Sec.\ 3.2]{JosienOtto_2019}, we obtain:

\begin{theorem}
\label{ThIndepGauss} Assuming the situation described in Example \ref{Ex_2GaussIndep} of Section \ref{SecExample}, there exists a unique (up to addition of a random constant) generalized corrector $\lt(\phi,\sigma\rt)$ associated with the coefficient field $a$ that satisfies the estimate:
\begin{align}
\label{SublinRes}
\langl \lt| \lt( \phi,\sigma \rt)(x) - \lt(\phi,\sigma\rt)(y) \rt|^p \rangl^{\frac{1}{p}}
\lesssim_{d, \lambda, \kappa, \alpha, p}
\lt\{
\begin{aligned}
&  \ln^{\frac{1}{2}}\lt(2+|x-y|\rt) && \quad\text{if}\quad d=2,
\\
& 1 && \quad\text{if}\quad d \geq 3.
\end{aligned}
\rt.
\end{align}
\end{theorem}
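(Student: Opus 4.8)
The strategy is to exploit the independence of the three Gaussian fields $g_-$, $g_+$, $g_\circ$ in order to upgrade the almost-sure sublinearity estimate of Theorem \ref{PropRefinedDeter} to a \emph{stationary} estimate --- i.e.\ to a bound on $\langle |(\phi,\sigma)(x) - (\phi,\sigma)(y)|^p\rangle^{1/p}$ that is uniform over $x,y$ when $d\geq 3$ and grows only logarithmically when $d=2$. First, I would record that each of $g_\pm, g_\circ$ satisfies a (multiscale) logarithmic Sobolev inequality with an integrable coupling, by the spectral-representation argument of \cite{Gloria_Neukamm_Otto_2015,JosienOtto_2019} applied to the Gaussian fields with correlation decaying like \eqref{Hypoc_bis}; since the three fields are independent, the triple $(g_-,g_+,g_\circ)$ also satisfies a log-Sobolev inequality on the product space, with the Malliavin-type derivative splitting into the three blocks $\partial_{g_-}, \partial_{g_+}, \partial_{g_\circ}$. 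The key gain over the general setting of Theorem \ref{PropRefinedDeter} is that this gives us \emph{concentration}, not just a deterministic growth rate: once we control the $\langle\cdot\rangle$-expectation and the fluctuations of $(\phi,\sigma)(x)-(\phi,\sigma)(y)$, we can close the estimate.

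\textbf{Key steps.} (i) \emph{Existence and uniqueness of a stationary-increment corrector.} From Theorem \ref{PropRefinedDeter} (applied with $\nu=1$, which holds in Example \ref{Ex_2GaussIndep} by \cite[Prop.\ 3.2]{JosienOtto_2019}), $\langle\cdot\rangle$-a.s.\ there is a unique generalized corrector $(\phi,\sigma)$ with the logarithmic growth \eqref{CorrSubNu}--\eqref{CorrSubNuSig}. One checks that the coefficient field $a$ built from \eqref{Def_a_Ex} is \emph{jointly stationary in the directions parallel to the interface} $\mathcal I$ (though not in the $e_1$-direction, because of the interface), and that the uniqueness forces $\nabla\phi$, and a suitable gauge of $\nabla\sigma$, to inherit this partial stationarity; hence $(\phi,\sigma)(x)-(\phi,\sigma)(y)$ has a law depending only on $x-y$ modulo shifts along $\mathcal I$. (ii) \emph{$p=2$ bound via the Green-function/energy estimate.} Following \cite[Sec.\ 3.2]{JosienOtto_2019}, I would write the increment of the corrector against the mixed gradient of the heterogeneous Green's function $G$ (whose annealed bounds are available from the large-scale Lipschitz estimate, Theorem \ref{ThAL}, via \cite{BellaGiunti_2018}), integrate the contribution of the ``defect layer'' $\mathds 1_{[-1,1]\times\R^{d-1}}$ plus the contribution of the stationary bulk, and use $\int |\nabla\nabla G|^2 \lesssim$ (scale)$^{-d}$-type bounds; the layer contributes an extra $\int_1^{|x-y|} t^{-1}\,dt = \ln|x-y|$ only in $d=2$, because the parallel-direction integration over $\R^{d-1}$ of $|x^\perp|^{-1}$ against $|\nabla\nabla G|^2$ is summable in scale exactly when $d\geq 3$. (iii) \emph{Upgrade $p=2$ to all $p<\infty$.} Apply the log-Sobolev/spectral-gap inequality to the random variable $F = (\phi,\sigma)(x)-(\phi,\sigma)(y)$: its Malliavin derivative is again expressible through $\nabla\nabla G$ and $\nabla\phi$, and the resulting $\langle |\partial F|^2\rangle$ (in the appropriate norm with the integrable coupling) is controlled by the same quantity as in step (ii). This yields $\langle|F - \langle F\rangle|^p\rangle^{1/p} \lesssim_p \langle|F-\langle F\rangle|^2\rangle^{1/2}$, and step (ii) already bounds $\langle F\rangle$ and $\langle|F|^2\rangle^{1/2}$, giving \eqref{SublinRes}.

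\textbf{Main obstacle.} The delicate point is step (i)--(ii): the coefficient field is \emph{not} stationary in the $e_1$-direction, so the usual homogenization machinery (which relies on full stationarity to make $\langle |\nabla\phi|^2\rangle$ finite and to justify stationary-increment representations) must be replaced by an argument that only uses stationarity \emph{parallel} to the interface, combined with the deterministic decay in $|x^\perp|$ coming from Theorem \ref{PropRefinedDeter}. Concretely, one must show that $\langle|\nabla\phi(x)|^2\rangle$ is finite and uniformly bounded in $x$ --- including near the interface --- which requires combining the a.s.\ large-scale Lipschitz bound \eqref{LargeLip} with Meyers' estimate and the moment bound on the minimal radius $r^\star$ (Lemma \ref{moments}); only then can the Green-function representation of step (ii) be made rigorous. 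Once the uniform second-moment control of $\nabla\phi$ near $\mathcal I$ is secured, the rest is a careful but essentially standard sensitivity-estimate computation in the spirit of \cite{Gloria_Neukamm_Otto_2015,JosienOtto_2019}, with the only dimension-dependent term being the $\ln^{1/2}$ factor in $d=2$ produced by the marginal (non-)summability of the layer's contribution.
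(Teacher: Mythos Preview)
Your proposal is on the right track and shares the paper's central idea --- the independence of $g_-,g_+,g_\circ$ yields a spectral gap / log-Sobolev inequality on the product space (your step (iii) is exactly the paper's Step~2, deriving \eqref{Prop21a}) --- but the way you propose to control the sensitivity is organized differently from the paper and is vaguer at the crucial point.

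The paper does \emph{not} separate a $p=2$ Green-function estimate (your step (ii)) from a concentration upgrade (your step (iii)). Instead it applies the spectral gap directly to the functional $F:=\fint_{\Boule(x,1)}\phi-\fint_{\Boule(y,1)}\phi$, computes the Malliavin derivative explicitly as $\partial F/\partial a(z)=(\nabla(\phi_j+P_j)\otimes\nabla v)(z)$ via a \emph{dual pair} $(w,v)$ solving $-\Delta w=\mathds{1}_{\Boule(x,1)}-\mathds{1}_{\Boule(y,1)}$ and $-\div(a^\star\nabla v+\nabla w)=0$, and then bounds $\langl(\int|\partial F/\partial a|^2)^p\rangl$ by duality and the \emph{annealed Meyers estimate} \cite[Prop.~4.1(i)]{JosienOtto_2019}, which reduces everything to $\int|\nabla w|^2$. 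The dimension dependence in \eqref{SublinRes} then comes from classical potential theory for $w$ (the deterministic solution of a Poisson equation), not from a layer computation as you suggest. Your Green-function route would probably also work, but the paper's dual-problem formulation is cleaner precisely because it sidesteps any explicit layer/interface bookkeeping at this stage --- that bookkeeping was already absorbed into Theorem~\ref{PropRefinedDeter}, which here is used only to furnish uniqueness and the uniform moment bound $\sup_x\langl|\nabla\phi(x)|^p\rangl\lesssim 1$ (via Caccioppoli and the Li--Nirenberg interior estimate \cite{LiNirenberg_2003}, using the H\"older regularity of $a$ from \eqref{wg03}). Your ``main obstacle'' is thus resolved by the paper not through partial stationarity but simply by this pointwise moment bound on the gradient, which is more elementary than the Meyers/minimal-radius argument you sketch.
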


Our argument for Theorem \ref{ThIndepGauss} is essentially a corollary of the techniques used in \cite[Sec.\ 3.2]{JosienOtto_2019} (where there is no interface), which rely on the availability of a powerful tool: a spectral gap estimate.
This ingredient is actually available in our current setting.
Indeed, thanks to the independence assumption\footnote{
Independence is a sufficient --but not necessary-- condition for deriving a spectral gap for the global medium.} and \eqref{Hypoc_bis}, the ensemble $\langl \cdot\rangl$ is such that, 
for any functional $F=F(a)$ and $p \in [1,\infty)$ the following spectral gap holds:
\begin{align}
\label{Prop21a}
\langl \lt|F-\langl F \rangl\rt|^{2p}\rangl^{\frac{1}{p}} 
\lesssim_{d, \lambda, \kappa,  \alpha ,p } \langl \lt( \int_{\R^d}\lt| \frac{\partial F}{\partial a(z)}\rt|^2 \dd z \rt)^{p} \rangl^{\frac{1}{p}},
\end{align}
where we make use of the functional derivative defined by
\begin{align*}
\lim_{\epsilon \rightarrow 0} \frac{F(a+\epsilon \delta a) - F(a)}{\epsilon} = \int_{\R^d} \frac{\partial F(a)}{\partial a(z)}  \lt(\delta a(z)\rt) \dd z.
\end{align*}
Moreover, the Gaussian fields $g=g_-$, $g=g_+$, and $g=g_\circ$ are smooth in the following sense: For any $0<\alpha'<\alpha$ and $p \in [1,\infty)$, there holds
\begin{align}\label{wg03}
\langl\lt(\sup_{x,x'\in \Boule(0,1)}\frac{|g(x)-g(x')|}{|x-x'|^{\alpha'}}\rt)^p\rangl^\frac{1}{p}
\lesssim_{d, \lambda,  \kappa ,  \alpha,\alpha',p } 1.
\end{align} 

\subsection{Proposition \ref{PropContrex}: Example for optimality of Theorem \ref{PropRefinedDeter}}

Let $\eta$ be a function on $\R^d$ defined by
\begin{align}\label{Defeta_strip}
\eta(x):=\eta_1\lt(x^\perp\rt) \eta_2\lt(x \cdot e_2\rt),
\end{align}
where $\eta_1$ and $\eta_2:\R \rightarrow [0,1]$ are two smooth functions such that
\begin{align*}
\lt\{
\begin{aligned}
&[1, +\infty) \subset \lt\{ t \, : \, \eta_1(t) =1 \rt\} \subset \Supp(\eta_1) \subset [0, + \infty),
\\
&[-1/2,1/2] \subset \lt\{ t \,:\, \eta_2(t) =1 \rt\} \subset \Supp(\eta_2) \subset [-1,1].
\end{aligned}
\rt.
\end{align*}
By definition \eqref{Defeta_strip}, the support of $\eta$ lies in the strip $D$ defined by
\begin{align}
\label{DefD}
D:= [0,+\infty) \times [-1,1] \times \R^{d-2}.
\end{align}
We then define the symmetric coefficient field $a$ as
\begin{align}\label{CoeffDefA}
a(x):= \Id + \eta(x) e_1 \otimes e_1.
\end{align}

\begin{figure}[h]
\begin{center}
\includegraphics[scale=0.9]{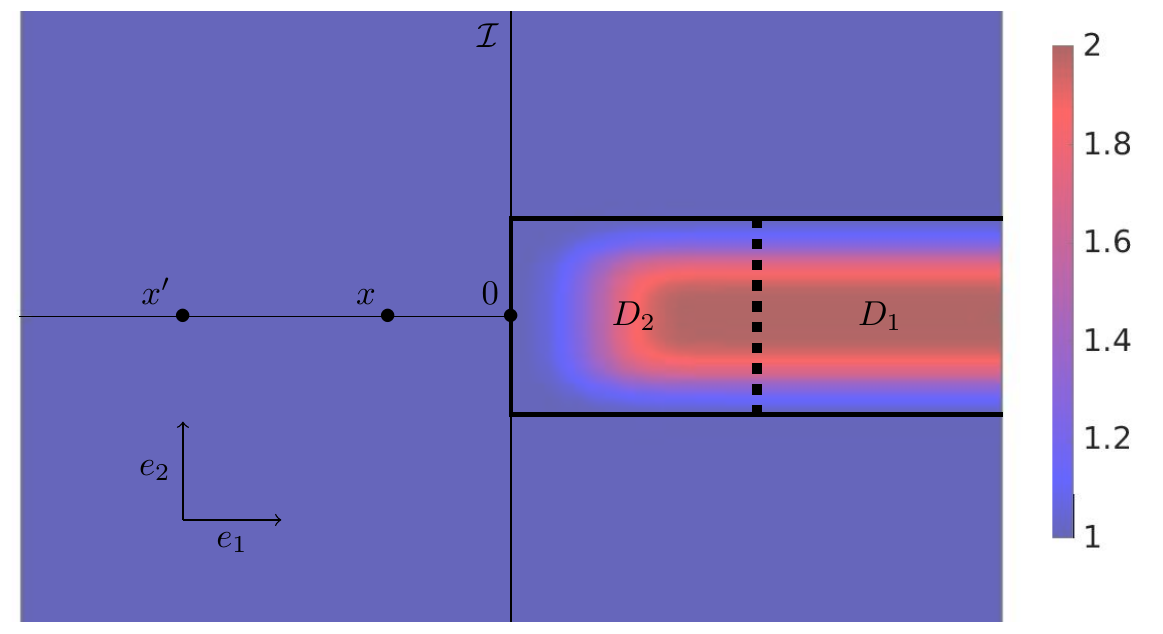}
\end{center}
\caption{Value of $a_{11}(x)$. The support of $\eta$ corresponds to the zone $D=D_1 \cup D_2$.}\label{FigureDefa_strip}
\end{figure}

Note that one may write $a$ in the form \eqref{Defa} for
$a_-(x):=\Id$, $a_+(x):=\Id+\eta(x) e_1 \otimes e_1$, and $a_\circ=\Id + \eta(x) e_1 \otimes e_1$.
Hence, $a$ admits the matrix $\overline{a}=\Id$ as its homogenized matrix.
Moreover, we easily derive that
\begin{align*}
&\phi_-=\phi_+=0, \quad \sigma_-=0,
\quad\text{and}\quad \lt(\sigma_+\rt)_{ijk}=\lt(\delta_{i1}\delta_{j2} - \delta_{i2}\delta_{j1} \rt)\delta_{k1} \int_0^{x \cdot e_2} \eta_2.
\end{align*}
Thus, the generalized correctors $\lt(\phi_{\pm}, \sigma_{\pm}\rt)$ are uniformly bounded in $\R^d$.  Now, the relevant observation is that the global corrector $\phi$ has an unbounded growth rate:

\begin{proposition}
\label{PropContrex}
Assume that $d\geq 3$. 
Let the coefficient field $a$ be defined by \eqref{CoeffDefA} and $\phi$ be the associated unique (up to addition of a constant) strictly sublinear corrector.
Then, there exists a constant $C(d) >0$ such that for any $r\geq 1$ there exist points $x, x' \in \R^d$ satisfying
\begin{align}
\lt|\phi(x)-\phi(x')\rt| \geq C \ln\lt(2+r\rt) \quad\text{and}\quad \lt|x-x'\rt| =r.
\label{BorneCorr_strip}
\end{align}
In particular, the corrector $\phi$ is not bounded.
\end{proposition}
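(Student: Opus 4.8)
The plan is to construct an explicit family of $a$-harmonic test functions adapted to the strip $D$ and then use the defining equation \eqref{DefCorr} for the corrector together with an energy/duality argument to detect the logarithmic growth. Since here $\overline a = \Id$ and $P_j(x) = x_j$, the corrector $\phi = (\phi_k)$ solves $-\div(a\nabla(x_k + \phi_k)) = 0$, i.e. $-\Delta \phi_k = \div(\eta\, (e_1\otimes e_1)(e_k + \nabla\phi_k)) = \partial_1\big(\eta(\delta_{k1} + \partial_1\phi_k)\big)$. The only interesting component is $k=1$: writing $\psi := \phi_1$, we get $-\Delta\psi = \partial_1\big(\eta(1 + \partial_1\psi)\big)$, so $\psi$ is (up to a constant) the strictly sublinear solution of $-\div\big((\Id + \eta e_1\otimes e_1)\nabla\psi\big) = -\partial_1\eta$, with right-hand side a compactly-in-$x^\perp$, strip-localized distribution. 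The idea is that this is morally a dipole layer / single-layer potential along the two faces of the strip $D$, and in $d\geq 3$ a such a layer of \emph{infinite} $(d-1)$-dimensional extent in the $e_3,\dots,e_d$ directions produces a potential that, along the axis of the strip (the $e_2$ direction, far from the origin), grows logarithmically — exactly as the Newtonian potential of an infinite $(d-2)$-dimensional ``wire'' does.

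Concretely, I would proceed as follows. \textbf{Step 1: reduce to a scalar equation.} Establish the identities above and note $\psi = \phi_1$ is the unique strictly sublinear $\HH^1_{\loc}$ solution of $-\div(a\nabla\psi) = -\partial_1\eta$ in $\R^d$; existence/uniqueness follows from Theorem \ref{PropRefinedDeter} (or, more elementarily, from the large-scale Lipschitz theory of Theorem \ref{ThAL} applied to the whole-space corrector). \textbf{Step 2: lower bound via a good test function.} Pick a cutoff $\zeta_r$ supported in $\Boule(0, Cr)$, equal to $1$ on $\Boule(0,r)$, and a ``comparison'' function $h_r$ that is harmonic (or close to $a$-harmonic, since $a = \Id$ outside $D$) away from $D$ and captures the logarithmic profile — e.g. $h_r(x) \approx \ln(2 + \dist(x, \mathrm{axis}))$ suitably truncated, where the axis is $\{x^\perp = 0\text{-ish}, x\cdot e_2 \text{ large}\}$. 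Testing the equation for $\psi$ against $\zeta_r h_r$ (or against $\psi$ itself and comparing energies) yields $\int \nabla\psi\cdot a \nabla(\zeta_r h_r) = \int \eta\,\partial_1(\zeta_r h_r)$; the right-hand side is an $O(1)$ quantity concentrated in $D$, while the left-hand side, after integrating by parts and using that $h_r$ is essentially $a$-harmonic, reduces to boundary-type terms on $\partial\Boule(0,Cr)$ plus the dipole contribution along $D$. \textbf{Step 3: extract the logarithm.} The cleanest route is probably to compute the fundamental-solution representation: $\psi = G * (-\partial_1\eta) = \partial_1 G * (-\eta)$ up to the perturbation from $a\neq\Id$ inside $D$, where $G$ is the whole-space Green's function of $-\div(a\nabla\cdot)$, which by the large-scale Lipschitz estimate \eqref{LargeLip} satisfies Gaussian-type/$|x-y|^{2-d}$ bounds and $|\nabla_x G| \lesssim |x-y|^{1-d}$. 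Since $-\eta$ has a sign and is supported in the unbounded strip $D \cong [0,\infty)\times[-1,1]\times\R^{d-2}$, one evaluates $\psi(x') - \psi(x)$ for two points $x, x'$ far apart along $e_2$ and uses that $\int_{\R^{d-2}} |(s, y)|^{1-d}\, dy \sim |s|^{-1}$, so that $\int_0^{\sim r}\!\int_{\R^{d-2}} |x' - (t, 0, y)|^{1-d}\, dy\, dt \sim \int_0^{\sim r} \frac{dt}{|x'\cdot e_2 - t| + 1} \sim \ln(2+r)$; the one-dimensional logarithmic divergence in the $t$-integration along the (semi-infinite) length of the strip is precisely the source of \eqref{BorneCorr_strip}. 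One must check the contributions do not cancel: this is where the sign of $\eta$ and a careful choice of $x, x'$ (e.g. $x$ near the origin and $x' = r e_2$, or two points symmetric about the strip) are used.

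\textbf{The main obstacle} I anticipate is controlling the perturbation caused by $a \neq \Id$ inside the strip $D$: the representation $\psi = \partial_1 G_a * (-\eta)$ with $G_a$ the Green's function of the \emph{true} operator is exact, but then one needs two-point difference bounds and near-diagonal control of $\nabla_x G_a$ that are uniform enough to isolate the logarithm rather than drown it in error terms — this is exactly what the annealed Green's function estimates (a consequence of Theorem \ref{ThAL} via \cite{BellaGiunti_2018}) are for, but here in a purely deterministic setting one would instead invoke the deterministic large-scale Lipschitz / $C^{0,1}$ estimate \eqref{LargeLip} and the resulting pointwise decay $|\nabla G_a(x,y)| \lesssim |x-y|^{1-d}$ for $|x-y|\gtrsim 1$. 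A secondary technical point is handling the region $|x-y| \lesssim 1$ and the interface layer, but since $\eta$ is smooth and $a$ is smooth and uniformly elliptic there, local Schauder/$\HH^2$ estimates suffice. Finally, one should verify the matching \emph{upper} bound $|\phi(x)-\phi(x')| \lesssim \ln(2+|x-x'|)$ is consistent with \eqref{CorrSubNu} for $\nu=1$ — which it is — so that this example genuinely demonstrates optimality; but for the proposition as stated only the lower bound \eqref{BorneCorr_strip} is needed.
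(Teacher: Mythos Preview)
Your overall route---the Green's function representation for $\psi=\phi_1$ and the observation that the strip $D$ acts like a $(d-1)$-dimensional source whose potential diverges logarithmically---is exactly the paper's approach. You also correctly anticipate the main technical point: one must replace $\nabla_y G$ by $\nabla\bar G$ (the Laplacian Green's function) with a controlled error, which the paper does via the two-scale expansion estimate \eqref{GreenDef_strip} together with the bound \eqref{ApproxBarphi} on $\nabla\phi$ near the strip.

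However, your Step 3 contains a geometric slip that would make the argument fail as written. The strip $D=[0,\infty)\times[-1,1]\times\R^{d-2}$ is \emph{narrow} in the $e_2$ direction and extends to infinity in the $e_1$ (and $e_3,\dots,e_d$) directions; its ``axis'' is along $e_1$, not $e_2$. With your suggested choice $x'=re_2$, after integrating out $\tilde y\in\R^{d-2}$ one is left with
\[
\int_0^{r}\frac{\dd t}{\sqrt{(x'_1-t)^2+(x'_2)^2}}
=\int_0^{r}\frac{\dd t}{\sqrt{t^2+r^2}}=O(1),
\]
not a logarithm. Your displayed expression $|x'\cdot e_2 - t|$ mixes the $e_2$-coordinate of $x'$ with the $e_1$-integration variable $t$, which is the symptom of this confusion.

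The paper's choice is $x=-e_1$ and $x'=-(r+1)e_1$, i.e.\ both points on the negative $e_1$-axis. One splits $D=D_1\cup D_2$ with $D_2=[0,r]\times[-1,1]\times\R^{d-2}$. The far piece over $D_1$ is handled via the path-integral representation \eqref{GwthRate_strip} with the mixed-derivative decay $|\nabla_x\nabla_yG|\lesssim|x-y|^{-d}$, giving $O(1)$. On $D_2$ one undoes the path integral; the contribution at the far endpoint $x'$ is $O(1)$ because $\mathrm{dist}(x',D_2)\gtrsim r$, while at the near endpoint $x=-e_1$ the main term is
\[
-\int_{D_2}\eta(y)\,\partial_1\bar G(x-y)\,\dd y
\sim C\int_0^{r}\int_{\R^{d-2}}\frac{1+y_1}{((1+y_1)^2+|\tilde y|^2)^{d/2}}\,\dd\tilde y\,\dd y_1
=C'\int_0^{r}\frac{\dd y_1}{1+y_1},
\]
which is the logarithm. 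Once you correct the direction from $e_2$ to $e_1$ and adopt this near/far splitting (needed because $\int_D|\partial_{y_1}G(x,y)|\,\dd y$ does not converge on the full strip), your sketch becomes essentially the paper's proof.
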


\subsection{Corollary \ref{ThCvgDet}: A convergence rate}
As an application of Theorem \ref{PropRefinedDeter}, we obtain optimal convergence rates (up to powers of a logarithm). In particular, we prove the following corollary (which, for simplicity, is deterministic):
\begin{corollary}\label{ThCvgDet}
Let $d \geq 3$.
Assume that the deterministic coefficient field $a$ is defined by \eqref{Defa}, satisfies \eqref{Ellipticite}, is uniformly $\alpha$-Hölder continuous for a fixed exponent $\alpha \in (0,1)$, and satisfies:
\begin{align*}
\lt\|a \rt\|_{\CC^{0,\alpha}(\R^d)} \leq \kappa.
\end{align*}
Suppose that the underlying coefficient fields $a_\pm$ admit constant homogenized matrices $\overline{a}_\pm$, and that there exist generalized correctors associated with $a_\pm$ that satisfy:
\begin{equation*}
\sup_{x,y \in \R^d, |x-y| =r} \lt|\Phi_{\pm}(x)- \Phi_{\pm}(y)\rt| \leq \kappa r^{1-\nu},
\end{equation*}
for a fixed exponent $\nu \in (0,1]$.

Let~$f \in \LL^p(\R^d)$ with support inside~$\Boule(x_0,1)$, for~$p>d$.
Assume that the functions~$u^\epsilon$ and~$\overline{u}$ are the zero-mean solutions to
\begin{align}\label{DivAGrad}
\lt\{
\begin{aligned}
&-\div\left(a \left( x/\epsilon \right)\nabla u^\epsilon(x) \right)=-\div\left(\overline{a}(x)\nabla \overline{u}(x) \right)  =f(x) \quad\text{in}\quad \R^d,
\\
&\nabla u^\epsilon, \nabla \overline{u} \in \LL^2\lt(\R^d,\R^d\rt).
\end{aligned}
\rt.
\end{align}
Then, there there holds
\begin{align}
\label{Eq:ThLinfty3-1}
\lt\|u^\epsilon-\overline{u}\rt\|_{\LL^\infty\lt(\R^d\rt)} \lesssim_{d,\lambda,\kappa,\alpha,\nu,p}
\lt\{
\begin{aligned}
& \epsilon^\nu \ln\lt(2 + \epsilon^{-1}\rt) \lt\|f\rt\|_{\LL^p(\R^d)} && \quad\text{if}\quad \nu <1,
\\
& \epsilon \ln^3\lt(2 + \epsilon^{-1}\rt) \lt\|f\rt\|_{\LL^p(\R^d)} && \quad\text{if}\quad \nu=1.
\end{aligned}
\rt.
\end{align}
Moreover, if $f \in \LL^\infty\lt(\R^d\rt)$, then:
\begin{align}
\label{Eq:ThWLinfty}
\begin{split}
& \lt\|\nabla u^\epsilon - \nabla \overline{u} - \nabla\phi\lt(\frac{\cdot}{\epsilon} \rt) \cdot \overline{\nabla} \overline{u} \rt\|_{\LL^\infty(\R^d)} \\
& \quad \quad \quad \lesssim_{d,\lambda,\kappa,\alpha,\nu}
\lt\{
\begin{aligned}
&    \epsilon^\nu \ln^2(2+\epsilon^{-1}) \lt\|f \rt\|_{\LL^\infty(\R^d)} && \quad\text{if}\quad \nu <1,\\
& \epsilon \ln^5(2+\epsilon^{-1}) \lt\|f \rt\|_{\LL^\infty(\R^d)}  && \quad\text{if}\quad \nu=1.
\end{aligned}
\rt.
\end{split}
\end{align}
\end{corollary}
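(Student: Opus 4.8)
The plan is to deduce Corollary~\ref{ThCvgDet} from Theorem~\ref{PropRefinedDeter} (with the ensemble $\langl\cdot\rangl$ taken to be trivial, so that the annealed $L^p$ bounds collapse to pointwise deterministic estimates and yield, for $\nu<1$, $|\phi(x)-\phi(x')|\lesssim (1+|x-x'|)^{1-\nu}$ and $|\sigma(x)-\sigma(x')|\lesssim (1+|x-x'|)^{1-\nu}\ln(2+|x-x'|)$, and the corresponding logarithmic bounds when $\nu=1$) together with the algebraic identity~\eqref{Algebre}. First I would record the regularity of $\overline{u}$ and $\overline{\nabla}\,\overline{u}$: since $\overline{a}$ is piecewise constant with a flat interface and $f\in L^p(\Boule(x_0,1))$ with $p>d$, classical Schauder/Calder\'on--Zygmund theory across a flat interface (the results of \cite{Vogelius_2000,LiNirenberg_2003}, invoked via Lemma~\ref{LemRegUbar}) gives $\overline{u}\in C^{1,\alpha}$ on each side with $\nabla\overline{\nabla}\,\overline{u}\in L^\infty_{\loc}$, plus the far-field decay $|\nabla\overline{u}(x)|\lesssim (1+|x-x_0|)^{1-d}$ and $|\nabla^2\overline{u}(x)|\lesssim (1+|x-x_0|)^{-d}$ coming from the fundamental-solution representation; these are what make the right-hand side of~\eqref{Algebre} integrable against the heterogeneous Green's function.

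\smallskip
\noindent\textbf{The $L^\infty$ estimate~\eqref{Eq:ThLinfty3-1}.} After the rescaling $x\mapsto x/\epsilon$ the identity~\eqref{Algebre} reads $-\div(a(\cdot/\epsilon)\nabla(u^\epsilon-\widetilde u^\epsilon))=\partial_i(\epsilon\,(a_{ij}(\cdot/\epsilon)\phi_k(\cdot/\epsilon)-\sigma_{ijk}(\cdot/\epsilon))\,\partial_j\overline{\partial}_k\overline u)$, where $\widetilde u^\epsilon=\overline u+\epsilon\,\phi(\cdot/\epsilon)\cdot\overline{\nabla}\,\overline u$. Representing $u^\epsilon-\widetilde u^\epsilon$ by the heterogeneous Green's function $G$ and integrating by parts, one gets a pointwise bound of the form
\[
|u^\epsilon(x)-\widetilde u^\epsilon(x)|\ \lesssim\ \epsilon\int_{\R^d}|\nabla_y G(x,y)|\,\big|(a\phi-\sigma)(y/\epsilon)\big|\,\big|\nabla\overline{\nabla}\,\overline u(y)\big|\,\dd y .
\]
Using the large-scale Lipschitz estimate (Theorem~\ref{ThAL}, available here since the Hölder continuity of $a$ gives small-scale regularity and the correctors are strictly sublinear), one has the annealed/deterministic bound $\fint_{\Boule(x,R)}|\nabla_y G(x,y)|\lesssim |x-y|^{1-d}$; combining this with the corrector growth $|(a\phi-\sigma)(y/\epsilon)|\lesssim (\epsilon^{-1}|y-x_0|)^{1-\nu}$ (resp.\ $\ln(\epsilon^{-1}|y-x_0|)$ near the endpoints, where one uses the boundedness of the correctors on unit balls to avoid a spurious divergence) and the decay $|\nabla\overline{\nabla}\,\overline u(y)|\lesssim (1+|y-x_0|)^{-d}$, the $y$-integral converges and produces the factor $\epsilon^{1-(1-\nu)}=\epsilon^\nu$ together with one logarithm from the region $|y-x_0|\sim 1$ (two extra logarithms from $\sigma$ when $\nu=1$). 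Finally one estimates $\|u^\epsilon-\overline u\|_\infty\le\|u^\epsilon-\widetilde u^\epsilon\|_\infty+\epsilon\|\phi(\cdot/\epsilon)\|_{\infty,\,{\rm loc}}\|\overline{\nabla}\,\overline u\|_\infty$, and the corrector term is of the same or better order by the growth bounds of Theorem~\ref{PropRefinedDeter} restricted to the support-relevant region. Tracking the constants' dependence on $f$ reduces, via $\|\nabla^2\overline u\|$-type bounds, to $\|f\|_{L^p}$.

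\smallskip
\noindent\textbf{The gradient estimate~\eqref{Eq:ThWLinfty}.} Here I would differentiate the Green's representation: $\nabla u^\epsilon(x)-\nabla\widetilde u^\epsilon(x)=\epsilon\int\nabla_x\nabla_y G(x,y)\,(a\phi-\sigma)(y/\epsilon)\,\nabla\overline{\nabla}\,\overline u(y)\,\dd y$ (with the usual care about the singular diagonal, handled by subtracting a constant and using the mixed-gradient bound $\fint_{\Boule(x,R)}|\nabla_x\nabla_y G(x,y)|\lesssim |x-y|^{-d}$ from \cite{BellaGiunti_2018}, which follows from Theorem~\ref{ThAL}), and then note $\nabla\widetilde u^\epsilon=\nabla\overline u+\nabla\phi(\cdot/\epsilon)\cdot\overline{\nabla}\,\overline u+\epsilon\,\phi(\cdot/\epsilon)\cdot\nabla\overline{\nabla}\,\overline u$, the last term being lower order. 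The $y$-integral with the $|x-y|^{-d}$ kernel, the corrector growth, and the $|y-x_0|^{-d}$ decay of $\nabla\overline{\nabla}\,\overline u$ again converges and gives $\epsilon^\nu$ times (one more power of) a logarithm than the $L^\infty$ case — two logs for $\nu<1$ and five for $\nu=1$, the count being dictated by $\sigma$ contributing $(1+r)^{1-\nu}\ln(2+r)$ resp.\ $\ln^3(2+r)$ in Theorem~\ref{PropRefinedDeter} plus the extra log from the $|x-y|^{-d}$ (borderline) kernel applied near $|y-x_0|\sim 1$ and near $|y-x_0|\sim \epsilon^{-1}$. This uses $f\in L^\infty$ to upgrade the $\nabla\overline{\nabla}\,\overline u$ bounds.

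\smallskip
The main obstacle I expect is not the algebra but the \emph{borderline integrability} at the two ends of the $y$-integral: near $y=x_0$ the Green-kernel singularity ($|x-y|^{1-d}$, resp.\ $|x-y|^{-d}$) meets the fact that the corrector increments are only controlled relative to unit-scale averages (one must insert $|\,\cdot\,|$ on $\Boule(0,1)$, not pointwise values, and exploit that the corrector is genuinely $O(1)$ on unit balls), so the integral is only log-divergent and yields exactly the advertised powers of $\ln(2+\epsilon^{-1})$; and near spatial infinity the slow corrector growth $(\epsilon^{-1}|y|)^{1-\nu}$ competes with the $|y|^{-d}$ decay of $\nabla^2\overline u$, which is integrable with room to spare for $d\ge 3$ (this is why $d\ge3$ is assumed) but again only marginally when combined with the $|x-y|^{-d}$ mixed-gradient kernel, forcing the careful bookkeeping of logarithms. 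A secondary technical point is making the Green's-function representation and the integration by parts rigorous given that $\nabla\overline{\nabla}\,\overline u$ has a jump across the interface, which is handled by working separately on $\{x^\perp>0\}$ and $\{x^\perp<0\}$ and checking that the interface contributes no boundary term because $\overline{\nabla}\,\overline u$ is continuous through $\mathcal I$ (as emphasized after~\eqref{DefUUU}).
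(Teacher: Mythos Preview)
Your treatment of \eqref{Eq:ThLinfty3-1} matches the paper: representing $u^\epsilon - \widetilde u^\epsilon$ by the heterogeneous Green's function and combining $|\nabla_y G^\epsilon(x,y)|\lesssim|x-y|^{1-d}$, the corrector growth from Theorem~\ref{PropRefinedDeter}, and the decay/regularity of $\nabla\overline\nabla\,\overline u$ is precisely the argument in Section~\ref{Sec_proof_ThCvgDet}.

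For \eqref{Eq:ThWLinfty}, however, your route diverges from the paper's and has a genuine gap at the diagonal. You propose $\nabla(u^\epsilon-\widetilde u^\epsilon)(x)=\epsilon\int\nabla_x\nabla_y G(x,y)\,h(y)\,\dd y$ with $h(y)=(a\phi-\sigma)(y/\epsilon)\,\nabla\overline\nabla\,\overline u(y)$, and say the non-integrable kernel $|x-y|^{-d}$ is ``handled by subtracting a constant''. That device requires a H\"older modulus of $h$ that is uniform in $\epsilon$, which fails: $h$ oscillates at scale $\epsilon$, so the prefactor $\epsilon$ is eaten by the $\epsilon^{-1}$-Lipschitz constant of $(a\phi-\sigma)(\cdot/\epsilon)$ and one gets only $|h(y)-h(x)|\lesssim|y-x|\,\|\nabla\overline\nabla\,\overline u\|_\infty$; integrating $|y-x|^{1-d}$ near the diagonal then yields an $O(1)$ contribution, not $O(\epsilon^\nu)$. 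Splitting at scale $\epsilon$ and using Caccioppoli/Schauder on the inner piece instead brings in $\epsilon^{-1}\|u^\epsilon-\widetilde u^\epsilon\|_{L^\infty}\sim\epsilon^{\nu-1}$, which is again large when $\nu<1$.

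The paper avoids this by a genuinely different, duality-based route. From \eqref{Eq:ThLinfty3-1} (applied with $f$ approximating a Dirac mass, the adjoint problem having the same structure) one first deduces the Green's-function \emph{difference} bound \eqref{DefGreen1}. Then the large-scale Lipschitz estimate of Theorem~\ref{ThAL}, combined with the small-scale Schauder available from the H\"older continuity of $a$, upgrades this to the \emph{gradient} expansion \eqref{DefGreen2} of $G^\epsilon$. Finally one writes
\[
\nabla u^\epsilon(x)-\nabla\overline u(x)-\nabla\phi\!\Big(\frac{x}{\epsilon}\Big)\!\cdot\overline\nabla\,\overline u(x)
=\int_{\Boule(x_0,1)}\!\Big(\nabla_x G^\epsilon-\nabla_x\bar G-\nabla\phi\!\Big(\frac{x}{\epsilon}\Big)\!\cdot\overline\nabla_x\bar G\Big)(x,y)\,f(y)\,\dd y,
\]
so the integration is only over the \emph{compact support of} $f$, against the kernel \eqref{DefGreen2} of order $|x-y|^{-(d-1+\nu)}\ln^{\cdots}$, which is locally integrable for $\nu<1$ (and the residual diagonal issue for $\nu=1$ is localised to $x\in\Boule(x_0,1)$ and handled by a further decomposition). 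This is the Kenig--Lin--Shen mechanism from \cite{KLSGreenNeumann,Article_Homog,Josien_InterfPer_2018}; the point is that Lipschitz regularity applied to the \emph{difference} $G^\epsilon-\bar G$ converts the $L^\infty$ information into gradient information without ever differentiating under a singular integral against the oscillating two-scale right-hand side.
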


\begin{remark}
The assumptions of Corollary \ref{ThCvgDet} encompass Example \ref{Example_PerDef}.
\end{remark}

\begin{remark}
We assume in Corollary \ref{ThCvgDet} that the dimension $d \geq 3$ and that $f$ has compact support in order to ascertain that $f \in \HH^{-1}(\R^d)$. (Relaxations of these assumptions are possible, but we do not consider these subtleties for simplicity.)
\end{remark}

In our proof of Corollary \ref{ThCvgDet}, we make use of the generalized 2-scale expansion \eqref{2scale}. In Figure \ref{FigA} the accuracy of the generalized 2-scale expansion in the case of Example \ref{Example_PerDef} (for the coefficient drawn on the left of Figure \ref{Fig_Examples} and for a fixed smooth right-hand side $f$) is illustrated in comparison with a naive glued 2-scale expansion. For this, we define the local errors $E^{\epsilon,1}$ and $E^{\epsilon,2}$ respectively corresponding to the left-hand side of \eqref{Eq:ThWLinfty} and to the glued 2-scale expansion:
\begin{align*}
&E^{\epsilon,1}:=\nabla u^\epsilon - \nabla \overline{u} - \nabla\phi\lt(\frac{\cdot}{\epsilon} \rt) \cdot \overline{\nabla} \overline{u},
\\
&E^{\epsilon,2}:=
\lt\{
\begin{aligned}
&\lt|\nabla u^\epsilon - \lt(\Id + \nabla \phi_+ \lt(\frac{\cdot}{\epsilon}\rt) \rt) \cdot \nabla \overline{u}\rt|  && \quad\text{in}\quad \R_+ \times \R^{d-1},
\\
&\lt|\nabla u^\epsilon - \lt(\Id + \nabla \phi_- \lt(\frac{\cdot}{\epsilon}\rt) \rt) \cdot \nabla \overline{u}\rt|  && \quad\text{in}\quad \R_- \times \R^{d-1}.
\end{aligned}
\rt.
\end{align*}    
\begin{figure}
\begin{center}
\includegraphics[width=\linewidth]{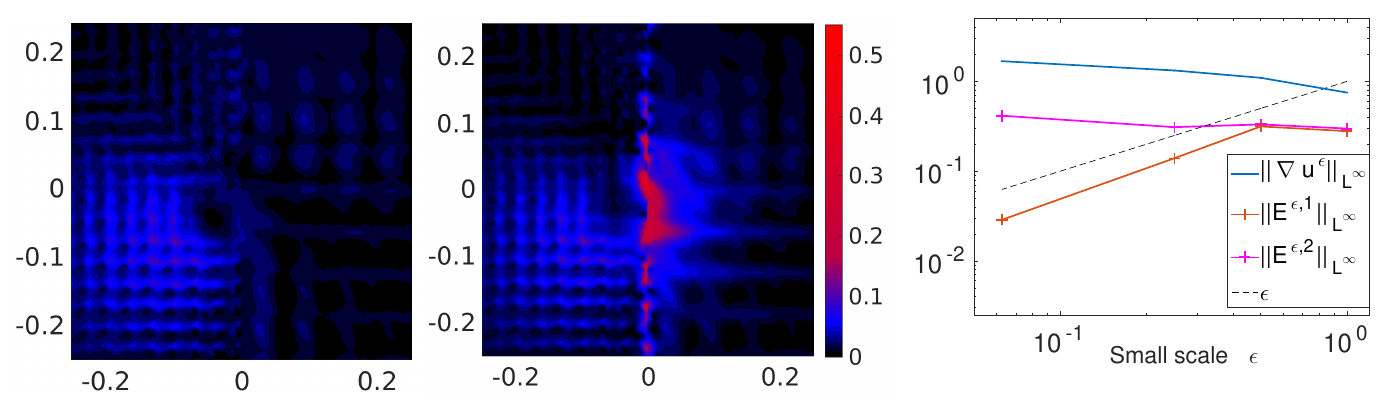}
\caption{\label{FigA} Consider a coefficient field $a$ as on the left of Figure \ref{Fig_Examples} and a smooth forcing term $f$ with compact support. On the left of this figure we have the local error $E^{\epsilon,1}$, in the middle is the local error $E^{\epsilon,2}$, and on the right we see the $\LL^\infty$-norms of the aforementioned quantities when $\epsilon \downarrow 0$.}
\end{center}  
\end{figure}
As expected, we observe that our approximation performs well uniformly on the space, whereas the glued two-scale expansion is accurate far from the interface, but useless in the vicinity of it.
While letting $\epsilon \downarrow 0$, we observe that the convergence rate is approximately linear in $\epsilon$, as predicted by Corollary \ref{ThCvgDet}.

\subsection{Further geometrical extensions}

\paragraph{Corners and boundary}
In this contribution, we only consider equations \eqref{DivAGrad} posed on the whole-space $\R^d$.
This has the beneficial consequence of avoiding the problem of boundaries.
However, this is \textit{not} only a convenient simplification.
Indeed, if we would replace $\R^d$ by a smooth bounded domain and complement the equation with homogeneous Dirichlet boundary conditions, we would face the geometrical problem of the crossing between the interface and the boundary.
There, the regularity results for discontinuous coefficients that we are using from \cite{Vogelius_2000,LiNirenberg_2003,Lorenzi_1972} do not apply.
This geometrical problem is not unrelated with the case of a non-flat interface with a corner --which might be relevant when modeling interfaces between crystals.
We intend to explore these issues in upcoming works.

From another perspective, let us emphasize that the techniques used here could improve the results of \cite{Fischer_Raithel_2017,Raithel_2017}. In particular, one could obtain almost-optimal growth rates for correctors on the half-space with homogeneous Dirichlet and Neumann boundary conditions.

\paragraph{The case of a wide interface layer}
In this article, we have assumed that the layer around the interface has a width $2L=2$ (see \eqref{Defa}).
In general, this width $L \geq 1$ might be much larger than the characteristic scale (here $1$) of the oscillations of the coefficients $a_\pm$. 
Thus, it might be useful to track the dependence in $L$ in the estimates \eqref{CorrSubNu} and \eqref{CorrSubNuSig}.

To handle this situation, we perform the rescaling $\tilde{a}(y):=a(Ly)$.
Notice that the width of the interface layer of $\tilde{a}$ is now $2$, so that we may apply our results to $\tilde{a}$.
Also, remark that the correctors $\tilde{\phi}$, $\tilde{\sigma}$, $\tilde{\phi}_\pm$, \textit{etc.} associated to $\tilde{a}$ are obtained from the correctors $\phi$, $\sigma$, $\phi_\pm$ associated to $a$ by the following rescaling: $\tilde{\phi}(y)=L^{-1} \phi(Ly)$.
Performing the arguments for Theorem \ref{PropRefinedDeter} (see Section \ref{section_6}) in this special context produces (after rescaling back) the following estimates:
\begin{align*}
&  \sup_{x,y \in \R^d, |x-y| =r} \langl \left(  \int_{Q_1}  \lt|\phi(x+z)-\phi(y+z)\rt|^2 \dd z \right)^{\frac{p}{2}} \rangl^{\frac{1}{p}} \\
& \hspace{1cm} \lesssim L \ln \lt( 2 +\frac{|r|}{L}\rt) + |r|^{1-\nu} \ln^{\lfloor \nu \rfloor}\lt( 2+ \frac{|r|}{L}\rt),
\end{align*}
and
\begin{align*}
\nonumber
& \sup_{x,y \in \R^d, |x-y| =r} \langl  \lt(\int_{\Boule(0,1)} \lt|\sigma(x + z)-\sigma(y + z) \rt|^2 \dd z \rt)^{\frac{p}{2}}  \rangl^{\frac{1}{p}}      \\
& \hspace{1cm} \lesssim L \ln^3 \lt( 2 + \frac{|r|}{L}\rt) + |r|^{1-\nu} \ln^{1+2\lfloor \nu \rfloor}\lt( 2+ \frac{|r|}{L}\rt).
\end{align*} 

\paragraph{Systems}
In all of this article, we consider a scalar equation. However, we do not use any elliptic tool specific to scalar equations (\textit{e.g.}, the maximum principle).
Therefore, our theorems may extend to the case of systems, that is replacing \eqref{DivAgrad0} by
\begin{equation*}
- \div \left(A : \nabla u \right)=f \quad \text{ in the sense of } \quad
- \sum_{i}^d \partial_i \left( \sum_{\beta=1}^m \sum_{j=1}^d A_{ij}^{\alpha\beta} \partial_j u^\beta \right)=f^{\alpha},
\end{equation*}
where the coefficient $A=\left( A_{ij}^{\alpha\beta}\right)$, for~$i, j \in [\![1,d]\!]$ and~$\alpha, \beta \in [\![1,m]\!]$, $m \in \mathbb{N}$, is elliptic in the following sense\footnote{Notice that \textit{elastic} systems do not satisfy fully \eqref{Homog_Green_EllipSys}, but a weaker version of it, where the vector-valued vectors $\xi$ in \eqref{Homog_Green_EllipSys} shall be symmetric in the sense of $\xi_i^\alpha=\xi_\alpha^i$ (in such a case, $m=d$) --see \cite[Sec.\ 2.4 p.\ 26]{Shen}.}:
\begin{align}
\label{Homog_Green_EllipSys}
\lambda |\xi|^2 \leq \sum_{i, j=1}^d \sum_{\alpha, \beta=1}^m A_{ij}^{\alpha\beta}(x) \xi^{\alpha}_i \xi^{\beta}_j \leq \left|\xi\right|^2 && \forall x \in \mathbb{R}^d, \xi=\left(\xi_i^\alpha\right) \in \mathbb{R}^{d\times m}.
\end{align}

\section{Argument for Theorem \ref{ThAL}}
\label{section_4}

In all this section, we place ourselves under the assumptions of Theorem \ref{ThAL}.

We do not directly follow the strategy of \cite{Gloria_Neukamm_Otto_2015}, but a more recent version that can be found in \cite[Th.\ 3 in Chap.\ 3]{Gloria_Notes}.
The main idea is to take advantage of the sublinearity of the (ungauged) generalized corrector \eqref{CorrSub} in oder to control the excess.
This is rephrased in the following lemma, which is an adaptation of \cite[Lem.\ 3]{Gloria_Neukamm_Otto_2015}:

\begin{lemma}\label{LemCore}
Assume that the function $u$ is $a$-harmonic in $\Boule(x_0,R)$ and that $\delta$ defined by
\begin{align}\label{Defdelta}
\delta:= \frac{1}{R} \lt( \fint_{\Boule(x_0,R)} \lt| (\phi,\sigma^{\rm u} ) - \fint_{\Boule(x_0,R)} (\phi,\sigma^{\rm u} )  \rt|^2  \rt)^{\frac{1}{2}}
\end{align}
satisfies $\delta \leq 1$. Then, there exists a constant $\epsilon = \epsilon(d,\lambda)>0$ such that, for any $0< r \leq R$, the following estimate holds:
\begin{align}\label{EstimLemCore}
\mathcal{E}(x_0,r)[u] \lesssim_{d,\lambda}  \lt( \lt(\frac{r}{R}\rt)^2 + \delta^{2\epsilon} \lt(\frac{R}{r}\rt)^{d+2} \rt) \fint_{\Boule(x_0,R)} \lt|\nabla u\rt|^2 .
\end{align}
\end{lemma}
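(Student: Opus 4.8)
\textbf{Proof plan for Lemma \ref{LemCore}.}

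The plan is to follow the standard homogenization-based excess-decay argument, transferring the $C^{1,\alpha}$-type regularity of the (piecewise-constant-coefficient) homogenized operator to the heterogeneous $a$-harmonic function $u$ via the modified $2$-scale expansion \eqref{2scale}. First I would introduce, on the ball $\Boule(x_0,R)$, the homogenized replacement $\overline{u}$ solving $-\div(\overline{a}\nabla\overline{u})=0$ in $\Boule(x_0,R)$ with the same boundary data as $u$ (or a slightly smaller concentric ball to have enough room for a cut-off). Since $\overline{a}$ is piecewise constant with a flat interface, the regularity results for elliptic equations with such discontinuous coefficients (the references \cite{Vogelius_2000,LiNirenberg_2003} invoked in the introduction) give that $\overline{u}$ is $C^{1,\alpha}$ up to the interface after the change of variables encoded by $P$; concretely, $\overline{\nabla}\overline{u}=(\nabla P)^{-1}\nabla\overline{u}$ is Hölder continuous across $\mathcal{I}$, which yields the Campanato-type estimate
\begin{align*}
\fint_{\Boule(x_0,r)}\lt|\nabla\overline{u}-\nabla P\cdot\xi_r\rt|^2 \lesssim \lt(\frac{r}{R}\rt)^{2}\fint_{\Boule(x_0,R)}\lt|\nabla\overline{u}-\nabla P\cdot\xi_R\rt|^2
\end{align*}
for suitable $\xi_r\in\R^d$ (here $\xi_r$ plays the role of the ``slope'' of $\overline{u}$ at scale $r$). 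This is the one place where the interface structure enters, and it replaces the classical interior $C^{1,1}$ estimate for constant coefficients used in \cite{Gloria_Neukamm_Otto_2015}.

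Next I would quantify the error between $u$ and its $2$-scale expansion $\widetilde{u}=\overline{u}+\phi\cdot\overline{\nabla}\overline{u}$ on $\Boule(x_0,R/2)$, say. Using the algebraic identity \eqref{Algebre}, the function $w:=u-\widetilde{u}$ satisfies $-\div(a\nabla w)=\partial_i((a_{ij}\phi_k-\sigma^{\rm u}_{ijk})\partial_j\overline{\partial}_k\overline{u})$; testing with $w$ (after localizing with a cut-off supported in $\Boule(x_0,R)$, which is why one works on a smaller ball and why the ungauged $\sigma^{\rm u}$ only needs \eqref{Def_Pot} and the skew-symmetry \eqref{PotSym}) and using Caccioppoli together with the energy bound $\fint_{\Boule(x_0,R)}|\nabla\overline u|^2\lesssim\fint_{\Boule(x_0,R)}|\nabla u|^2$, one obtains
\begin{align*}
\fint_{\Boule(x_0,R/2)}\lt|\nabla u-\nabla\widetilde{u}\rt|^2 \lesssim \lt(\frac{1}{R}\lt(\fint_{\Boule(x_0,R)}\lt|(\phi,\sigma^{\rm u})-\fint(\phi,\sigma^{\rm u})\rt|^2\rt)^{\frac12}\rt)^{2\epsilon}\lt(\fint_{\Boule(x_0,R)}\lt|\nabla u\rt|^2\rt),
\end{align*}
i.e. a gain of $\delta^{2\epsilon}$; the exponent $\epsilon<1$ (rather than $1$) arises because one cannot control $(\phi,\sigma^{\rm u})$ in $L^\infty$ but only in averaged $L^2$, so one interpolates the $H^1$-energy estimate for $w$ against the $L^2$-smallness of the correctors (an application of Meyers' higher-integrability estimate, or a straightforward interpolation, gives the small positive power). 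One also needs here the boundedness of $\overline{\nabla}\overline{u}$ and the $L^\infty_{\rm loc}$-bound on $\nabla\overline{\nabla}\overline{u}$ mentioned after \eqref{Algebre}.

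Finally I would combine these two ingredients. Writing $\nabla u-(\nabla P+\nabla\phi)\cdot\xi = (\nabla u-\nabla\widetilde u) + (\nabla\overline u-\nabla P\cdot\xi) + (\nabla\phi\cdot\overline{\nabla}\overline u-\nabla\phi\cdot\xi) + (\text{terms with }\nabla\overline{\nabla}\overline u)$, choosing $\xi=\xi_r$ from the Campanato estimate for $\overline{u}$, and bounding the corrector-difference term again by the smallness of $\phi$ (which costs another factor of the normalized corrector norm, absorbed into the $\delta^{2\epsilon}$ term), one arrives at
\begin{align*}
\mathcal{E}(x_0,r)[u]\lesssim \lt(\frac{r}{R}\rt)^2\fint_{\Boule(x_0,R)}\lt|\nabla u\rt|^2 + \lt(\frac{R}{r}\rt)^{d+2}\,\delta^{2\epsilon}\fint_{\Boule(x_0,R)}\lt|\nabla u\rt|^2,
\end{align*}
where the blow-up factor $(R/r)^{d+2}$ comes from passing the error estimate, which lives on the fixed ball $\Boule(x_0,R/2)$, down to the small ball $\Boule(x_0,r)$ by brutally enlarging the domain of integration (volumes scale like $(R/r)^d$, and the extra $R^2$ inside the excess — which is an un-normalized-by-$r^2$ quantity in the definition \eqref{DefExcess} relative to the scaling of $\nabla u$ — contributes the remaining $(R/r)^2$). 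This is exactly \eqref{EstimLemCore}.

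The main obstacle I anticipate is the Campanato/Hölder estimate for $\overline{u}$ across the interface: one must verify that, after the affine change of variables $P$ that makes $\overline{\nabla}\overline{u}$ continuous, the piecewise-constant-coefficient operator genuinely enjoys a $C^{1,\alpha}$ (or at least $C^1$, which suffices for the exponent $2$ here) interior-up-to-the-interface estimate uniform in the position of $x_0$ relative to $\mathcal I$ — including the delicate cases where $\Boule(x_0,R)$ straddles the interface asymmetrically or $x_0$ is far from it. This is precisely where \cite{Vogelius_2000,LiNirenberg_2003} are needed and where the argument departs genuinely from the stationary case of \cite{Gloria_Neukamm_Otto_2015}; the rest is a careful but essentially routine adaptation of \cite[Lem.\ 3]{Gloria_Neukamm_Otto_2015} and \cite[Th.\ 3, Chap.\ 3]{Gloria_Notes}.
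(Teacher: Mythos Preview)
Your plan is correct and follows the same overall architecture as the paper: introduce the $\overline{a}$-harmonic replacement $\overline{u}$ with boundary data $u$, compare $u$ to the modified $2$-scale expansion $\widetilde{u}=\overline{u}+\eta\,\phi\cdot\overline{\nabla}\overline{u}$ via the algebraic identity \eqref{Algebre}, use the interface-adapted regularity of $\overline{u}$ (Lemma~\ref{LemRegUbar}, which is the concrete form of the Li--Vogelius/Li--Nirenberg input you anticipate), and combine. The one genuine technical difference is how the small exponent $\epsilon$ is produced. You propose Meyers' higher integrability (or interpolation) to make the boundary-layer term $(1-\eta)(a-\overline{a})\nabla\overline{u}$ small. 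The paper instead uses a \emph{weighted} energy estimate (Lemma~\ref{LemWeight}): testing the equation for $w:=u-\overline{u}-\eta\phi\cdot\overline{\nabla}\overline{u}$ with weight $(1-|x-x_0|)^{\beta}$ and invoking a Hardy inequality yields $\int(1-|x-x_0|)^{\beta}|\nabla w|^{2}\lesssim\int(1-|x-x_0|)^{\beta}|g|^{2}$; the weight is $\le\rho^{\beta}$ on the transition shell of width $\rho$, so the boundary-layer contribution is $O(\rho^{\beta})$ without any higher integrability of $\nabla\overline{u}$, and optimizing $\rho^{\beta}+\rho^{-d-2}\delta^{2}$ gives $\epsilon=\beta/(d+2+\beta)$. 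Your Meyers route also works but needs care to control $\nabla\overline{u}$ in $L^{p}$ up to $\partial\Boule(x_0,R)$ (the fix you hint at --- defining $\overline{u}$ on a strictly smaller ball and using interior Meyers for $u$ on the larger one --- is exactly what is required); the weighted approach sidesteps this entirely. Finally, your explanation of the factor $(R/r)^{d+2}$ is slightly off: it does not come from a scaling hidden in the definition of the excess, but from a Caccioppoli step applied to the $a$-harmonic function $u-(P+\phi)\cdot\xi-c$ (contributing $(R/r)^{2}$) followed by enlarging the domain of integration from $\Boule(x_0,2r)$ to $\Boule(x_0,R)$ (contributing $(R/r)^{d}$, via Hardy in the weighted version).
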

The difference between our proof of Lemma \ref{LemCore} and \cite[Lem.\ 3]{Gloria_Neukamm_Otto_2015} is that we use the generalized $2$-scale expansion \eqref{2scale} instead of the classical one. This results in the standard algebraical identity involving the difference between the solution of the oscillating problem and the $2$-scale expansion  (see \cite[p.\ 26-27]{JKO} or \cite[(79)]{Gloria_Neukamm_Otto_2015}) being replaced by \eqref{Algebre}; and the regularity estimate for $\nabla^2 \overline{u}$  used in the original proof (that is unavailable in the case of an interface) is replaced by a corresponding estimate for $\nabla \overline{\nabla} \overline{u}$, which is given in Lemma \ref{LemRegUbar}. 

Before proceeding with the proof of Lemma \ref{LemCore}, we emphasize two technical lemmas: Lemma \ref{LemRegUbar}, which concerns the regularity of $\nabla\overline{\nabla} \overline{u}$, and Lemma \ref{LemWeight}, which provides a weighted energy estimate.
We start with Lemma \ref{LemRegUbar}, which is classical and, therefore, proved in Appendix \ref{Sec:ProofLemmas} (see also \cite[Prop. 2.1]{Vogelius_2000}):

\begin{lemma}\label{LemRegUbar}
Let $x_0 \in \R^d$ and $\rho \in (0,1)$.
Assume that $\overline{u} \in \HH^1\lt(\Boule(x_0,1)\rt)$ is $\overline{a}$-harmonic in $\Boule(x_0,1)$, where $\overline{a}$ is defined in \eqref{Defabar}.
Then, we have that
\begin{align}\label{BorneNabla2}
\displaystyle\sup_{x \in \Boule(x_0, 1 - \rho) \setminus \mathcal{I}} \rho^2 |\nabla \overline{\nabla} \overline{u}(x)|^2 + \displaystyle\sup_{x \in \Boule(x_0, 1 - \rho)} |\overline{\nabla} \overline{u}(x)|^2  \lesssim \rho^{-d} \fint_{\Boule(x_0, 1) }|\overline{\nabla} \overline{u}|^2 .
\end{align}
\end{lemma}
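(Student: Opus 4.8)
The plan is to reduce the statement to interior regularity for the piecewise-constant-coefficient equation, exploiting the fact that $\overline{\nabla}\overline{u} = (\nabla P)^{-1}\nabla\overline{u}$ is \emph{continuous across the interface} (this is precisely the transmission condition built into the definition \eqref{DefPj} of $P$). First I would observe that $\overline{u}$ is $\overline{a}$-harmonic in $\Boule(x_0,1)$ with $\overline{a}$ piecewise constant (two constant values separated by the hyperplane $\mathcal{I}$), so by the classical regularity theory for elliptic equations with piecewise-constant (or piecewise-H\"older) coefficients across a flat interface --- the results of \cite{Vogelius_2000,LiNirenberg_2003} --- the solution $\overline{u}$ is piecewise-$\CC^{1,\gamma}$ (indeed piecewise smooth, since the coefficients are constant on each side) up to the interface, with the natural transmission conditions $\overline{u}$ and the conormal derivative $e_1\cdot\overline{a}\nabla\overline{u}$ continuous across $\mathcal{I}$. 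The key algebraic point is that these transmission conditions are exactly what make the modified gradient $\overline{\nabla}\overline{u}$ continuous through $\mathcal{I}$: writing out $(\nabla P)^{-1}$ using \eqref{DefPj}, the components of $\overline{\nabla}\overline{u}$ are, on each side, affine combinations of $\nabla\overline{u}$ whose one-sided traces match because tangential derivatives of $\overline{u}$ and the normal flux agree. Hence $\overline{\nabla}\overline{u}$ extends to a function that is globally continuous on $\Boule(x_0,1)$ and smooth on each side of $\mathcal{I}$.

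Next I would set up the two bounds separately. For the $\LL^\infty$ bound on $\overline{\nabla}\overline{u}$ itself: since $\overline{\nabla}\overline{u}$ is continuous on $\Boule(x_0,1)$ and, on each half-ball, it solves (componentwise) a constant-coefficient elliptic equation, one applies interior De Giorgi--Nash--Moser/Schauder estimates on each side together with the continuity across $\mathcal{I}$ to get $\sup_{\Boule(x_0,1-\rho)}|\overline{\nabla}\overline{u}|^2 \lesssim \rho^{-d}\fint_{\Boule(x_0,1)}|\overline{\nabla}\overline{u}|^2$ --- the $\rho^{-d}$ being the standard loss in passing from the $\LL^2$-average to the sup on a slightly smaller ball (Caccioppoli plus iteration, or simply the mean-value/sup bound for the constant-coefficient problem rescaled). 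For the weighted bound on $\nabla\overline{\nabla}\overline{u}$: away from $\mathcal{I}$ the coefficient is genuinely constant, so $\overline{u}$ (and each component of $\overline{\nabla}\overline{u}$) is smooth there, and interior derivative estimates give $|\nabla\overline{\nabla}\overline{u}(x)|^2 \lesssim \dist(x,\mathcal{I}\cup\partial\Boule(x_0,1-\rho/2))^{-2}\sup|\overline{\nabla}\overline{u}|^2$; combined with the sup bound from the previous step this yields $\rho^2|\nabla\overline{\nabla}\overline{u}(x)|^2 \lesssim \rho^{-d}\fint_{\Boule(x_0,1)}|\overline{\nabla}\overline{u}|^2$ for $x\in\Boule(x_0,1-\rho)\setminus\mathcal{I}$, which is exactly \eqref{BorneNabla2}. (Note the statement only asks for the weighted $\nabla\overline{\nabla}\overline{u}$ estimate \emph{off} the interface, so the possible jump of $\nabla\overline{\nabla}\overline{u}$ across $\mathcal{I}$ is harmless.)

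The main obstacle --- and the one place where I would be most careful --- is justifying the regularity of $\overline{u}$ \emph{up to the flat interface} and extracting the precise transmission conditions, rather than merely interior-away-from-$\mathcal{I}$ regularity; this is what genuinely requires \cite{Vogelius_2000,LiNirenberg_2003} (or, since the interface is flat and the coefficients constant on each side, an elementary reflection/even-odd extension argument across $\mathcal{I}$ after a linear change of variables diagonalizing each $\overline{a}_\pm$). A secondary technical point is bookkeeping the dependence on $\rho$: one should localize on the ball $\Boule(x_0,1-\rho/2)$, apply the up-to-interface estimate there with constants independent of $\rho$, and only then pay the $\rho^{-d}$ and $\rho^{-2}$ factors when descending to $\Boule(x_0,1-\rho)$ and differentiating; this is routine but must be done in the right order. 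Since the lemma is explicitly flagged as classical and is proved in Appendix \ref{Sec:ProofLemmas}, I would present the argument at the level of detail above, citing \cite{Vogelius_2000} for the up-to-interface piece.
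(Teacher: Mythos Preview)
Your overall framing is sound --- transmission conditions make $\overline{\nabla}\overline{u}$ continuous across $\mathcal{I}$, and piecewise regularity up to the flat interface is the right mechanism. But there is a genuine gap in your argument for the $\nabla\overline{\nabla}\overline{u}$ bound. You write
\[
|\nabla\overline{\nabla}\overline{u}(x)|^2 \lesssim \dist\bigl(x,\mathcal{I}\cup\partial\Boule(x_0,1-\rho/2)\bigr)^{-2}\sup|\overline{\nabla}\overline{u}|^2,
\]
and then claim this gives $\rho^2|\nabla\overline{\nabla}\overline{u}(x)|^2 \lesssim \rho^{-d}\fint|\overline{\nabla}\overline{u}|^2$. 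That step fails: for $x$ close to $\mathcal{I}$ (say $\dist(x,\mathcal{I})\ll\rho$) your bound blows up, whereas the lemma asserts a bound depending only on $\rho$, uniformly over $\Boule(x_0,1-\rho)\setminus\mathcal{I}$. Pure interior estimates away from $\mathcal{I}$ cannot see the up-to-interface regularity; you need to actually use it, not just announce it in the first paragraph.

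The fix is either (i) to invoke the quantitative piecewise $\CC^{2}$ estimate up to the interface from \cite{Vogelius_2000,LiNirenberg_2003} directly --- it bounds $\sup_{\Boule(x_0,1-\rho)\setminus\mathcal{I}}|\nabla^2\overline{u}|$ by $\rho^{-1-d/2}\|\nabla\overline{u}\|_{\LL^2(\Boule(x_0,1))}$ with no distance-to-interface factor --- or (ii) to argue elementarily, which is what the paper does: observe that tangential derivatives $\partial^\alpha\overline{u}$ (with $\alpha_1=0$) remain $\overline{a}$-harmonic through the interface, iterate Caccioppoli on those, then recover the missing $\partial_1$-derivatives from the equation $\partial_1^2\overline{u}=-\overline{a}_{11}^{-1}\sum_{(i,j)\neq(1,1)}\overline{a}_{ij}\partial_i\partial_j\overline{u}$ on each side, and finally upgrade $\LL^2$ to $\LL^\infty$ by Morrey. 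This route never introduces a $\dist(x,\mathcal{I})^{-1}$ factor and is entirely self-contained. A similar remark applies, more mildly, to your $\sup|\overline{\nabla}\overline{u}|$ step: ``interior estimates on each side plus continuity'' is not quite an argument; you again need up-to-interface control on $\nabla\overline{u}$, which comes from the same tangential-differentiation mechanism or from the cited references.
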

\noindent
Moving on to the weighted energy estimate, here is:
\begin{lemma}[Weighted energy estimate]\label{LemWeight}
Assume that $a$ satisfies \eqref{Ellipticite} and let $u \in H^1(\Boule(0,1))$ be a weak solution of 
\begin{align}
\label{DivagradG}
\lt\{
\begin{aligned}
-\div\lt(a \nabla u\rt)& = \div(g) && \quad\text{in}\quad \Boule(0,1),
\\
u&=0 && \quad\text{on}\quad \partial\Boule(0,1).
\end{aligned}
\rt.
\end{align}
Then, there exists an exponent $\beta=\beta(d,\lambda) \in (0,1)$ such that the following estimate holds:
\begin{align}
\label{WHardy}
\int_{\Boule(0,1)} \lt(1-|x|\rt)^\beta \lt|\nabla u(x)\rt|^2 \dd x \lesssim_{d,\lambda} \int_{\Boule(0,1)} \lt(1-|x|\rt)^{\beta} \lt|g(x)\rt|^2 \dd x.
\end{align}
\end{lemma}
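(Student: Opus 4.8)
The plan is to run a weighted $L^2$ energy estimate testing the equation \eqref{DivagradG} against $u$ multiplied by a suitable power of the distance to the boundary, and to absorb the troublesome boundary term using a one-dimensional Hardy inequality in the radial variable. First I would recall the key structural fact that makes the weight $(1-|x|)^\beta$ with small $\beta$ legitimate: the weight $w(x) := (1-|x|)^\beta$ is an $A_2$ Muckenhoupt weight on $\Boule(0,1)$ precisely when $-1 < \beta < 1$, and more importantly there is a weighted Hardy inequality on the ball, $\int_{\Boule} (1-|x|)^{\beta-2} |v|^2 \lesssim \int_{\Boule} (1-|x|)^\beta |\nabla v|^2$ for $v$ vanishing on $\partial \Boule$, valid for $\beta < 1$ (one reduces to the radial direction and uses the classical one-dimensional Hardy inequality on $(0,1)$). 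This Hardy inequality is the crucial ingredient, so I would state it cleanly as the first step.

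The second step is the actual energy computation. Testing $-\div(a\nabla u) = \div g$ against $w u$ gives
\begin{align*}
\int_{\Boule} w\, \nabla u \cdot a \nabla u = -\int_{\Boule} u\, \nabla w \cdot a\nabla u - \int_{\Boule} w\, \nabla u \cdot g - \int_{\Boule} u\, \nabla w \cdot g.
\end{align*}
By \eqref{Ellipticite} the left side is $\geq \lambda \int_{\Boule} w |\nabla u|^2$. Since $|\nabla w| = \beta (1-|x|)^{\beta-1} = \beta\, w^{1/2} (1-|x|)^{\beta/2-1} \cdot (1-|x|)^{\beta/2} w^{-1/2}$—more simply $|\nabla w| \le \beta (1-|x|)^{-1} w$—the first right-hand term is controlled by $\beta \int_{\Boule} (1-|x|)^{-1} w |u| |\nabla u| \le \beta\big(\int_{\Boule} (1-|x|)^{-2} w |u|^2\big)^{1/2}\big(\int_{\Boule} w |\nabla u|^2\big)^{1/2}$, and the Hardy inequality bounds $\int_{\Boule} (1-|x|)^{\beta-2}|u|^2$ by $C(\beta) \int_{\Boule} w |\nabla u|^2$ with $C(\beta) \to 0$ as $\beta \to 0$ (indeed the Hardy constant on $(0,1)$ scales like $1/(1-\beta)^2$ times a factor that stays bounded, but the prefactor $\beta$ from $\nabla w$ is what gives smallness). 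Hence, choosing $\beta = \beta(d,\lambda)$ small enough that $\beta \sqrt{C(\beta)} \le \lambda/2$, the $\nabla w$-term is absorbed into the left side. The remaining terms involving $g$ are handled by Young's inequality: $\int w |\nabla u||g| \le \frac{\lambda}{8}\int w|\nabla u|^2 + \frac{2}{\lambda}\int w |g|^2$, and the last term $\int_{\Boule} u\, \nabla w\cdot g$ is treated like the first, using Hardy once more to convert $(1-|x|)^{-1}|u|$ into $|\nabla u|$ and then Young, producing another $\frac{\lambda}{8}\int w |\nabla u|^2$ plus $C\int w|g|^2$. Collecting terms yields $\frac{\lambda}{4}\int_{\Boule} w|\nabla u|^2 \le C(d,\lambda)\int_{\Boule} w |g|^2$, which is \eqref{WHardy}.

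The step I expect to be the main obstacle is making the absorption argument genuinely quantitative, i.e.\ verifying that the combined constant in front of $\int_{\Boule} w|\nabla u|^2$ coming from the $\nabla w$ terms can be made smaller than $\lambda$ by choosing $\beta$ small, \emph{while keeping $\beta$ bounded away from $1$ so the Hardy inequality stays valid}. Concretely one must check that the product (prefactor $\beta$ from $\nabla w$) $\times$ (square root of the Hardy constant, which behaves like $1/(1-\beta)$) tends to $0$ as $\beta \downarrow 0$—which it does, since $\beta/(1-\beta) \to 0$. A secondary technical point is justifying the integration by parts and the use of $wu$ as a test function despite $w$ degenerating at the boundary; this is handled by a routine approximation, replacing $w$ by $w_\delta := (1-|x|+\delta)^\beta$ (which is Lipschitz on $\overline{\Boule}$ and nondegenerate), running the estimate with $w_\delta$ and constants uniform in $\delta$, and passing to the limit $\delta \to 0$ by monotone/dominated convergence. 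I would relegate this approximation to a one-line remark rather than belabor it.
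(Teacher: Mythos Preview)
Your proposal is correct and follows essentially the same route as the paper: test the equation against $wu$ with $w=(1-|x|)^\beta$ (the paper writes this as $\eta^2 u$ with $\eta=(1-|x|)^{\beta/2}$), invoke the weighted Hardy inequality $\int_{\Boule}(1-|x|)^{\beta-2}|u|^2 \lesssim \int_{\Boule}(1-|x|)^\beta|\nabla u|^2$ (the paper cites Ne\v{c}as, valid with a uniform constant for $\beta\in[0,1/2]$), and take $\beta$ small enough to absorb. Your write-up is somewhat more detailed on the intermediate Cauchy--Schwarz/Young steps, but the structure and the key absorption mechanism are identical.
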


The following proof can be found in the lecture notes in \cite[Chap.\ 3]{Gloria_Notes}. For the sake of self-containedness, we reproduce here the argument:
\begin{proof}[Proof of Lemma \ref{LemWeight}]
We will fix $\beta \in (0,1)$ at the end of the proof. First, notice that using the same argument as in Step 1 of Lemma \ref{LemRegUbar}, but with the test function $\eta^2 u$ with $\eta(x):=(1-|x|)^{\beta/2}$, we find that 
\begin{align}
& \int_{\Boule(0,1)} \lt(1-|x|\rt)^{\beta}\lt| \nabla u\rt|^2  \nonumber \\
& \quad \lesssim_{d,\lambda} \int_{\Boule(0,1)} \beta^2 \lt(1-|x|\rt)^{\beta-2}\lt| u \rt|^2 
+ \int_{\Boule(0,1)} \lt(1-|x|\rt)^{\beta} \lt| g\rt|^2.
\label{Absorb1}
\end{align}
We then invoke the following Hardy inequality \cite[Th.\ 1.6]{Necas_1965}: There exists a constant $C$ (depending only on $d$) such that, for any $\beta \in [0,1/2]$, there holds
\begin{align}\label{Ineq_Hardy}
\int_{\Boule(0,1)} \lt(1-|x|\rt)^{\beta-2}\lt| u \rt|^2 \leq C \int_{\Boule(0,1)} \lt(1-|x|\rt)^{\beta}\lt| \nabla u\rt|^2.
\end{align}
Therefore, we may set $\beta>0$ sufficiently small so that the first term on  right-hand side of \eqref{Absorb1} can be absorbed by the left-hand side.
This proves \eqref{WHardy}.
\end{proof}

We now proceed with the:
\begin{proof}[Proof of Lemma \ref{LemCore}]
For brevity, we drop the superscript ${\rm u}$ on the ungauged flux correctors $\sigma^{\rm u}$. Without loss of generality, we may assume that $4r \leq R$ and $R=1$. Furthermore, we may assume that $\phi$ and $\sigma$ are of zero mean on $\Boule(x_0,1)$.

We define the function $\overline{u} \in \HH^1(\Boule(x_0,1))$ as the Lax-Milgram solution of
\begin{align}\label{Defubar}
\lt\{
\begin{aligned}
-\div\lt(\overline{a} \nabla \overline{u}\rt)& =0  &&\quad\text{in}\quad \Boule(x_0,1),\\
\overline{u} & =u  &&\quad\text{on}\quad \partial\Boule(x_0,1),
\end{aligned}
\rt.
\end{align}
and denote the corresponding homogenization error as 
\begin{align}\label{DefW}
w:=u-\overline{u} - \eta \phi \cdot \overline{\nabla} \overline{u},
\end{align}
where $\eta \in \CC^{\infty}_\comp\lt( \Boule(x_0,1) \rt)$ will be fixed later. Notice that $w\equiv0$ on $\partial \Boule(x_0,1)$.

\paragraph{Strategy of proof}
We show \eqref{EstimLemCore} in five steps:
First, we use Lemma \ref{LemRegUbar} to  estimate the left-hand side of \eqref{EstimLemCore} by means of $\nabla \overline{u}$ and $\nabla w$. Then, the three following steps are dedicated to estimating $\nabla w$.
In Step 2, we show that $\nabla w$ satisfies an elliptic equation in divergence form.
In Step 3, we estimate a weighted energy of the right-hand side of this equation and, in Step 4, obtain an estimate for $\nabla w$ that involves only $\nabla \overline{u}$ via Lemma \ref{LemWeight}.
In Step 5, we conclude the proof by estimating $\nabla \overline{u}$ in terms of $\nabla u$.

\paragraph{Step 1: Estimate for the excess in terms of $\overline{u}$ and $w$}
Let $\beta$ be given by Lemma \ref{LemWeight}.
We claim that
\begin{equation}
\label{Step3}
\begin{aligned}
\mathcal{E}(x_0,r)[u] \lesssim& r^{-d-2} \fint_{\Boule(x_0,1)} \lt( 1 -|x-x_0|\rt)^{\beta} \lt|\nabla w\rt|^2
\\
&+\lt(r^2 +r^{-d}\delta^2 \rt) \fint_{\Boule(x_0,1)} \lt|\nabla \overline{u}\rt|^2.
\end{aligned}
\end{equation}

Indeed, we set the vector
\begin{align*}
\xi:=\fint_{\Boule(x_0,r)} \overline{\nabla} \overline{u}
\end{align*}
and the function
\begin{align}\label{Defwtilde}
\widetilde{w}:=u - \lt( P + \phi - P(x_0)\rt) \cdot \xi - \overline{u}(x_0).
\end{align}
By definition \eqref{DefExcess} of the excess, there holds:
\begin{align}\label{Excesswtilde}
\mathcal{E}(x_0,r)[u] \leq \fint_{\Boule(x_0,r)} \lt|\nabla \widetilde{w}\rt|^2.
\end{align}
Since the function $\widetilde{w}$ is $a$-harmonic in $\Boule(x_0,1)$,
the Caccioppoli estimate yields
\begin{align}
\fint_{\Boule(x_0,r)} \lt|\nabla \widetilde{w}\rt|^2 
\lesssim& r^{-2} \fint_{\Boule(x_0,2r)} \lt|\widetilde{w}\rt|^2.
\label{1}
\end{align}

Applying the triangle inequality to the definition \eqref{Defwtilde} of $\tilde{w}$, we may decompose:
\begin{align*}
\lt|  \widetilde{w} \rt| \leq \lt|u-\overline{u}-\phi\cdot\overline{\nabla}\overline{u}\rt|+\lt|\phi\rt|\lt|\xi - \overline{\nabla} \overline{u}\rt|+\lt|\overline{u}- \lt( P - P(x_0)\rt) \cdot \xi - \overline{u}(x_0)\rt|.
\end{align*}
The second and third terms on the right-hand side may then be handled by appealing to Lemma \ref{LemRegUbar} (see also \cite[Lem.\ 5.3]{Josien_InterfPer_2018} for more details):
\begin{align*}
\lt\|\xi - \overline{\nabla} \overline{u}\rt\|_{{\LL^\infty(\Boule(x_0,2r))}} 
\lesssim r \lt\| \nabla \overline{\nabla} \overline{u} \rt\|_{\LL^\infty(\Boule(x_0,1/2))}
\lesssim& r  \lt(\fint_{\Boule(x_0,1)} \lt| \nabla \overline{u} \rt|^2\rt)^{\frac{1}{2}}
\end{align*}
and
\begin{align*}
\lt\|\overline{u}- \lt( P - P(x_0)\rt) \cdot \xi - \overline{u}(x_0)\rt\|_{\LL^\infty(\Boule(x_0,2r))}
\lesssim& r^2 \lt\| \nabla \overline{\nabla} \overline{u} \rt\|_{\LL^\infty(\Boule(x_0,1/2))}
\\
\lesssim& r^2  \lt(\fint_{\Boule(x_0,1)} \lt| \nabla \overline{u} \rt|^2\rt)^{\frac{1}{2}}.
\end{align*}

Combining these observations with \eqref{1}, we find that 
\begin{align*}
&\fint_{\Boule(x_0,r)} \lt|\nabla \widetilde{w}\rt|^2\\
&\quad\lesssim r^{-2} \fint_{\Boule(x_0,2r)} \lt|u-\overline{u}-\phi \cdot \overline{\nabla} \overline{u}\rt|^2 +\lt(r^2 +\fint_{\Boule(x_0,2r)} \lt|\phi\rt|^2 \rt) \fint_{\Boule(x_0,1)} \lt|\nabla \overline{u}\rt|^2
\\
&\quad\lesssim
r^{-d-2} \fint_{\Boule(x_0,1)} \lt( 1 -|x-x_0|\rt)^{\beta-2} \lt|w\rt|^2
+\lt(r^2 +r^{-d} \delta^2 \rt) \fint_{\Boule(x_0,1)} \lt|\nabla \overline{u}\rt|^2.
\end{align*}
Notice that we have additionally used the definition \eqref{Defdelta} of $\delta$, the definition \eqref{DefW} of $w,$ and that $2r \leq 1/2$.
The Hardy inequality \eqref{Ineq_Hardy} produces
\begin{align*}
& \fint_{\Boule(x_0,r)} \lt|\nabla \widetilde{w}\rt|^2\\
& \hspace{.5cm} \lesssim
r^{-d-2} \fint_{\Boule(x_0,1)} \lt( 1 -|x-x_0|\rt)^{\beta} \lt|\nabla w\rt|^2
+\lt(r^2 +r^{-d}\delta^2 \rt) \fint_{\Boule(x_0,1)} \lt|\nabla \overline{u}\rt|^2,
\end{align*}
which, in light of \eqref{Excesswtilde}, yields \eqref{Step3}.

\paragraph{Step 2: Algebraical computation}
We show the following cut-off version of \eqref{Algebre}:
\begin{align}
-\div\lt(a\nabla w\rt)
=&\div(g) \quad\text{in}\quad \Boule(x_0, 1),
\label{Algebraical_Cutoff}
\end{align}
where
\begin{align*}
g_i:=\lt(a_{ij} \phi_k-\sigma_{ijk} \rt) \partial_j \lt( \eta \overline{\partial}_k \overline{u}\rt) + (1-\eta)\lt(a_{ij}-\overline{a}_{ij}\rt)\partial_j \overline{u}.
\end{align*}

Indeed, since $u$ and $\overline{u}$ are respectively $a$-harmonic and $\overline{a}$-harmonic in $\Boule(x_0,1)$, there holds:
\begin{align*}
-\div\lt(a\nabla w\rt)
=& - \div \lt( \overline{a} \nabla \overline{u} - a \nabla \overline{u} - a \nabla \lt( \eta \phi \cdot \overline{\nabla} \overline{u}\rt) \rt).
\end{align*}
But, on the one hand, we have:
\begin{align*}
\div \lt( \overline{a} \nabla \overline{u} - a \nabla \overline{u}\rt)
=\div\lt( \eta \lt(\overline{a}-a\rt) \cdot \nabla P  \overline{\nabla} \overline{u}\rt)+ \div\lt((1-\eta)\lt(\overline{a}-a\rt)\nabla \overline{u}\rt),
\end{align*}
and, on the other hand,
\begin{align*}
\div \lt( a \nabla \lt( \eta \phi \cdot \overline{\nabla} \overline{u}\rt) \rt)
=\partial_i \lt(a_{ij} \phi_k \partial_j\lt( \eta \overline{\partial}_k \overline{u} \rt)+ a_{ij}  \eta \overline{\partial}_k \overline{u} \partial_j \phi_k \rt).
\end{align*}
As a consequence, we find that
\begin{align*}
-\div\lt(a\nabla w\rt)
=&
-\partial_i\lt( \eta \lt(\overline{a}_{ij} \partial_j P_k-a_{ij} \partial_j (P_k + \phi_k)\rt) \overline{\partial}_k \overline{u}\rt)
\\
&- \partial_i\lt((1-\eta)\lt(\overline{a}_{ij}-a_{ij}\rt)\partial_j \overline{u}\rt)
+\partial_i \lt(a_{ij} \phi_k \partial_j\lt( \eta \overline{\partial}_k \overline{u} \rt)\rt).
\end{align*}
By the relation \eqref{Def_Pot} and the skew-symmetry condition \eqref{PotSym}, the first term on the right-hand side above can be expressed as:
\begin{align*}
\partial_i\lt( \eta \lt(\overline{a}_{ij} \partial_j P_k-a_{ij} \partial_j (P_k + \phi_k)\rt) \overline{\partial}_k \overline{u}\rt)
=&\partial_i\lt( \eta \partial_j \sigma_{jik} \overline{\partial}_k \overline{u}\rt)
\\
=&\partial_j \sigma_{jik} \partial_i \lt( \eta \overline{\partial}_k \overline{u}\rt)
\\
=&\partial_j \lt( \sigma_{jik} \partial_i \lt( \eta \overline{\partial}_k\overline{u}\rt)\rt).
\end{align*}
Whence, we obtain \eqref{Algebraical_Cutoff}.

\paragraph{Step 3: Estimate for $g$}
We claim that the following inequality holds:
\begin{align}\label{Claim_g}
\int_{\Boule(x_0,1)} \lt(1-|x-x_0|\rt)^\beta \lt|g\rt|^2 \lesssim \lt( \rho^\beta + \rho^{-d-2} \delta^2 \rt) \int_{\Boule(x_0,1)} \lt|\nabla \overline{u} \rt|^2,
\end{align}
where we assume now that $\eta=1$ in $\Boule(x_0,1-2\rho)$, $\Supp(\eta) \subset \Boule(x_0,1-\rho)$, and $\lt|\nabla \eta\rt| \lesssim \rho^{-1}$.

By definition, we have that
\begin{align*}
&\int_{\Boule(x_0,1)} (1-|x-x_0|)^\beta \lt|g_i\rt|^2\\
&\qquad\lesssim
\int_{\Boule(x_0,1-\rho)} (1-|x-x_0|)^\beta \lt|\lt(\sigma_{ijk}- a_{ij} \phi_k \rt) \partial_j \lt( \eta \overline{\partial}_k \overline{u}\rt) \rt|^2
\\
&\qquad\quad+\int_{\Boule(x_0,1-\rho)} (1-|x-x_0|)^\beta \lt(1-\eta\rt)^2 \lt|\lt(\overline{a}_{ij}-a_{ij}\rt)\partial_j \overline{u}\rt|^2.
\end{align*}
The first term on the right-hand side is handled by appealing to the Hölder inequality, whereas we recall that $(1-\eta)$ is supported in $\Boule(x_0,1-\rho) \backslash \Boule(x_0,1-2\rho)$ for the second term.
Thus:
\begin{align*}
& \int_{\Boule(x_0,1)} (1-|x-x_0|)^\beta \lt|g_i\rt|^2\\
& \lesssim \left(\lt\| \nabla \overline{\nabla} \overline{u} \rt\|^2_{\LL^\infty(\Boule(x_0,1-\rho))} + \rho^{-2}  \lt\| \overline{\nabla} \overline{u} \rt\|^2_{\LL^\infty(\Boule(x_0,1-\rho))} \right)
\int_{\Boule(x_0,1)} \lt|(\phi,\sigma)\rt|^2
\\
&\quad \quad +\rho^\beta \int_{\Boule(x_0,1)} \lt|\nabla \overline{u}\rt|^2.
\end{align*}
Thanks to Lemma \ref{LemRegUbar} and definition \eqref{Defdelta} of $\delta$, this yields \eqref{Claim_g}.

\paragraph{Step 4: Estimate for $\nabla w$}
The aim of this step is to estimate the first term on the right-hand side of \eqref{Step3}.
Applying Lemma \ref{LemWeight} to $w$, which satisfies \eqref{Algebraical_Cutoff}, we deduce that
\begin{align*}
\int_{\Boule(x_0,1)} \lt( 1 -|x-x_0|\rt)^\beta \lt|\nabla w\rt|^2 \lesssim \int_{\Boule(x_0,1)}\lt( 1 -|x-x_0|\rt)^\beta \lt|g\rt|^2.
\end{align*}
Then, invoking \eqref{Claim_g}, this yields
\begin{align*}
\int_{\Boule(x_0,1)} \lt( 1 -|x-x_0|\rt)^\beta \lt|\nabla w\rt|^2 
\lesssim \lt( \rho^\beta + \rho^{-d-2} \delta^2 \rt) \int_{\Boule(x_0,1)} \lt|\nabla \overline{u} \rt|^2.
\end{align*}
We optimize the above estimate by setting $\rho:=\delta^{\frac{2}{d+2+\beta}}$. Thus,
\begin{align}\label{EstimW}
\int_{\Boule(x_0,1)} \lt( 1 -|x-x_0|\rt)^\beta \lt|\nabla w\rt|^2 
\lesssim \delta^{2\epsilon}\int_{\Boule(x_0,1)} \lt|\nabla \overline{u} \rt|^2
\end{align}
for $\epsilon:=\beta/(d+2+\beta)$.

\paragraph{Step 5: Conclusion of the proof of \eqref{EstimLemCore}}
By \eqref{Step3} and \eqref{EstimW}, we have:
\begin{align}\label{Step5}
\mathcal{E}(x_0,r)[u] \lesssim& \lt(r^{-d-2} \delta^{2\epsilon} +r^2 +r^{-d}\delta^2 \rt) \fint_{\Boule(x_0,1)} \lt|\nabla \overline{u}\rt|^2.
\end{align}
Since $\delta$, $r$, and $\epsilon$ are smaller than $1$, we deduce that $\delta^2 \leq r^{-2}\delta^{2\epsilon}$.
Moreover, by \eqref{Defubar}, there holds:
\begin{align*}
\fint_{\Boule(x_0,1)} \lt|\nabla \overline{u}\rt|^2 \lesssim \fint_{\Boule(x_0,1)} \lt|\nabla u\rt|^2.
\end{align*}
As a consequence, \eqref{EstimLemCore} is obtained from \eqref{Step5} by restoring the scale $R$.
\end{proof}

With Lemma \ref{LemCore} in-hand, we are now in a position to give the argument for Theorem \ref{ThAL}. Since it closely follows  \cite[Proof of Lem.\ 3]{Gloria_Neukamm_Otto_2015}, we do not detail every step. 

\begin{proof}[Proof of Theorem \ref{ThAL}]
The proof proceeds in two steps: In Step 1, we turn the estimate \eqref{EstimLemCore} into an iterative excess decay (in the spirit of \cite{AvellanedaLin}).
Then, in Step 2, we show the non-degeneracy property \eqref{NonDegen}.
We do not show \eqref{LargeLip} since it is a corollary of \eqref{EstimLemCore} and \eqref{NonDegen}, the proof of which can be found in \cite[Proof of Lem.\ 3]{Gloria_Neukamm_Otto_2015}. The latter relies on a dyadic argument to control the minimizer $\xi_{r'}$ of $\mathcal{E}(x_0,r')$, for $r' \in [r,R]$, from $\xi_R$ down to $\xi_r$.

\paragraph{Step 1: Iterative excess decay}
Let $R \in [r^\star,r_{\max}]$ and $\theta  \in (0,1)$.
Since $u - \lt(P + \phi \rt) \cdot \xi$ is $a$-harmonic for any $\xi \in \R^d$, we deduce from Lemma \ref{LemCore} that
\begin{align}\label{Excess1}
\mathcal{E}(x_0,\theta R)[u] \leq C \lt( \theta^2 + \delta^{2\epsilon} \theta^{-d-2} \rt) \mathcal{E}(x_0,R)[u].
\end{align}
We set $\theta$ and then $\delta$ (in that order) sufficiently small so that
\begin{align*}
C\theta^2 \leq \frac{\theta^{2\alpha}}{2} \quad\text{and}\quad C \delta^{2\epsilon}\theta^{-d-2} \leq \frac{\theta^{2\alpha}}{2}.
\end{align*}
Then \eqref{Excess1} reads:
\begin{align}\label{Excess2}
\mathcal{E}(x_0,\theta R)[u] \leq \theta^{2\alpha} \mathcal{E}(x_0,R)[u].
\end{align}
Let $r \in [r^\star,R]$. Then, there exists $n$ such that $R\theta^{n+1} \leq r \leq R\theta^{n}$. By iterating \eqref{Excess2}, we deduce that
\begin{align*}
\mathcal{E}(x_0,r)[u]\leq \theta^{-d} \mathcal{E}(x_0,\theta^{n}R)[u] \leq  \theta^{2n\alpha-d} \mathcal{E}(x_0,R)[u] \leq C \lt(\frac{r}{R}\rt)^{2\alpha}\mathcal{E}(x_0,R)[u],
\end{align*}
where $C = \theta^{-d-2\alpha}$. This proves \eqref{Borne_Excess} (up to setting $\delta$ smaller).

\paragraph{Step 2: Non-degeneracy of the excess operator}
The upper bound is provided by the Caccioppoli inequality:
\begin{align*}
\fint_{\Boule(x_0,r)} \lt|\nabla (P+ \phi)  \cdot\xi \rt|^2
\lesssim
r^{-2}\fint_{\Boule(x_0,2r)} \lt|\lt(P-P(x_0)+\phi\rt) \cdot \xi  \rt|^2
\lesssim (1+\delta)^2 \lt|\xi\rt|^2
\end{align*}
for any $\xi \in \R^d$. The lower bound is a consequence of the Poincaré inequality:
\begin{align*}
\fint_{\Boule(x_0,r)} \lt|\nabla (P+ \phi)  \cdot\xi \rt|^2
\gtrsim&
r^{-2}\fint_{\Boule(x_0,r)} \lt|(P-P(x_0)) \cdot \xi + \phi  \cdot\xi \rt|^2
\\
\gtrsim&
r^{-2}\fint_{\Boule(x_0,r)} \lt|(P-P(x_0))\cdot \xi \rt|^2-r^{-2}\fint_{\Boule(x_0,r)} \lt|\phi  \cdot\xi \rt|^2
\\
\gtrsim&
\lt(1-C\delta^2\rt) \lt|\xi\rt|^2.
\end{align*}
This proves \eqref{NonDegen} (up to setting $\delta$ smaller).
\end{proof}

\section{Argument for Proposition \ref{PropExistCorr}}
\label{sec:Prop_1}

We begin by introducing the geometric objects needed in our argument and constructing local ungauged generalized correctors.  In the second subsection, we then show that we can inductively use the large-scale Lipschitz estimate \eqref{LargeLip} from Theorem \ref{ThAL} to, given a local ungauged generalized corrector at some scale, obtain another local ungauged generalized corrector on a larger scale that still satisfies the same sublinearity property. In the final subsection we show that we can pass to the limit in this procedure to obtain a global ungauged generalized corrector that is strictly sublinear. Again, for brevity, here we usually drop the superscripts $``\textrm{u}"$ on ungauged flux correctors.

\subsection{Construction of local ungauged generalized correctors}\label{SecEquatConst}

We use a geometrical construction involving dyadic balls and a narrow layer along the interface (see Figure \ref{Fig_Decompose}) according to which we decompose the interface layer corrections $\tilde{\phi}$ and $\tilde{\sigma}$ that we have already introduced in Section \ref{state_correctors}.

\paragraph{Geometrical setting:}
Fix $x_0 \in \mathbb{R}^d$. For $r_0 \geq 1$ to be fixed later, we set a dyadic sequence $r_m:=2^m r_0$ for $m \geq 0$ and $r_{m}=0$ if $m<0$. In order to define the interface layer, we introduce the function
\begin{align}\label{DefLpm}
L(r):= r^{-2\nu/3 +1}+ 2,
\end{align}
which, as will see in our proof, actually turns out to be optimal for our estimates. We now let $S_m$ be functions of $x$ depending only on $x^\perp$ and satisfying the following constraints:
\begin{align}
\label{DefSm}
\lt\{
\begin{aligned}
&\lt\{x \in \R^d: S_m(x)=1\rt\} \subseteq \lt\{x \in \R^d : \lt|x^\perp\rt| \leq \frac{L(r_m)}{2}\rt\},
\\
&\Supp(S_m) \subseteq \lt\{x \in \R^d  :  \lt|x^\perp\rt| \leq L(r_m)\rt\},
\\
&
\lt\|\nabla S_m \rt\|_{\LL^\infty(\R^d)} \lesssim \lt(L(r_m)\rt)^{-1}, \textrm{ and } \| S_m \|_{\LL^\infty(\R^d)} \leq 1  .
\end{aligned}
\rt.
\end{align}
We remark that the $``+2"$ in \eqref{DefLpm} is included to ensure that $S_m \equiv 1$ on $[-1,1] \times \mathbb{R}^{d-1}$. We also define the cut-off functions $\eta_m$ associated with the nested balls
\begin{align}
\label{DefetaM}
\begin{split}
\Boule\lt(x_0,r_{m}-L\lt(r_{m}\rt)\rt) & \subseteq \lt\{x \in \R^d : \eta_m(x)=1\rt\} \subseteq \Supp(\eta_m) \subseteq \Boule\lt(x_0,r_{m}\rt)
\end{split}
\end{align}
such that $|\nabla \eta_m| \lesssim |L( r_{m})|^{-1}$. To compliment this definition, we use the convention that a ball of negative radius is the empty set. Finally, we introduce the notation
\begin{align*}
\chi_m:=S_m (\eta_m-\eta_{m-1} ).
\end{align*}
and 
\begin{align*}
\chi:=\sum_{m=0}^{+\infty} \chi_m
\end{align*}
and notice that $\chi$ equals $1$ in a layer along the interface and vanishes far from it. This layer is thin along the interface, but is shaped like a trumpet and becomes wider far from the origin. 
\begin{figure}[h]
\begin{center}
\includegraphics{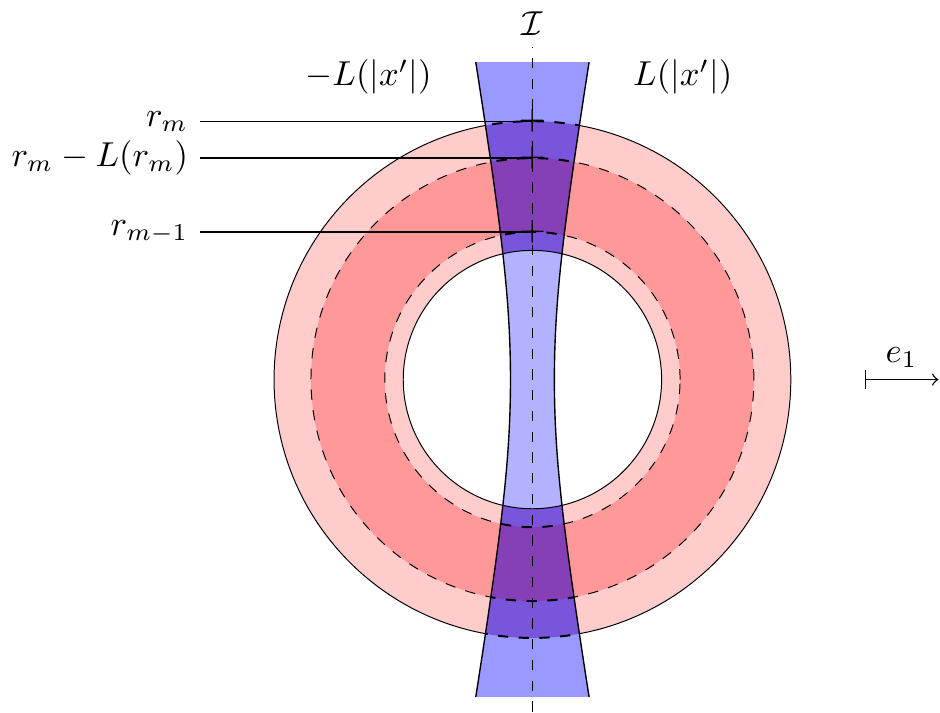}
\end{center}
\caption{In this figure, we set $x_0  =0$. Here the blue area represents the support of the cut-off function $\chi$ along the interface, \textit{i.e.} the ``trumpet'', and the pink areas denotes the support of $\eta_m- \eta_{m-1}$.  Notice that $\nabla(\eta_m- \eta_{m-1})$ is supported in the lighter pink area and $\eta_m- \eta_{m-1} =1$ in the darker pink area.}
\label{Fig_Decompose}
\end{figure}

\paragraph{Local ungauged generalized corrector $\lt(\phi^M, \sigma^{M,{\rm u}}\rt)$:}
In this subsection, for simplicity, we set $x_0 = 0$ (the cases $x_0 \neq 0$ can be dealt with similarly). For $m \geq 0$, we first derive an equation for the $m^{\textrm{th}}$ contribution to the interface layer correction $\tilde{\phi}^m$. For each $M \geq0$ we then sum these contributions up to the scale $r_02^M$ in order to obtain the local corrector $\phi^M$. In particular, we have the following result: 

\begin{lemma}\label{LemConstW}
For every $m\geq 0$ and $k \in \[ 1 , d \]$, there exists $\tilde{\phi}_k^m \in \HH^1_{\rm{loc}}(\R^d)$ that is a weak solution of 
\begin{align}\label{Defwm}
-\div\lt( a \nabla \tilde{\phi}^m_k \rt) 
=
\partial_i(g_{ik}^m) \quad\text{in}\quad \R^d,
\end{align}
for the vector $g^m_{k}$ defined by
\begin{align}
g^m_{ik}:=\lt(\chi_m \lt(a_{ij}-\overline{a}_{ij}\rt)
+\lt(a_{il} \check{\phi}_j-\check{\sigma}_{ilj}\rt)\partial_l \lt((\eta_m-\eta_{m-1})-\chi_m\rt)\rt)\partial_j P_k,
\label{Defg}
\end{align}
such that
\begin{align}
\int_{\R^d} \lt|\nabla \tilde{\phi}^m\rt|^2
\lesssim_{d,\lambda} \int_{\R^d} \lt|g^m\rt|^2.
\label{Estim1}
\end{align}

Furthermore, the function $\phi_k^M \in \HH^1_{\rm{loc}}(\R^d)$ defined by
\begin{align}\label{Ansatz2}
\phi_k^M:=\sum_{m=0}^M\tilde{\phi}^m_k+\lt(\eta_M-\sum_{m=0}^M\chi_m\rt)\check{\phi}\cdot\nabla P_k
\end{align}
is a ``local corrector'' in the sense that it satisfies
\begin{align}\label{Loc_CorrPhi}
-\div\lt( a\lt( \nabla \phi_k^M + \eta_M \nabla P_k \rt) -\eta_M \overline{a} \nabla P_k \rt)=0 \quad\text{in}\quad \R^d,
\end{align}
whence 
\begin{align}\label{PropPhiM}
-\div\lt( a \lt( \nabla \phi^M_k + \nabla P_k \rt) \rt) =0 \quad\text{in}\quad \Boule(0,r_{M-1}).
\end{align}
\end{lemma}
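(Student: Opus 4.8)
\textbf{Plan of proof for Lemma \ref{LemConstW}.}

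The plan is to first construct each interface-layer correction $\tilde\phi^m_k$ by a direct Lax--Milgram / Riesz argument, and then verify algebraically that the prescribed combination \eqref{Ansatz2} solves \eqref{Loc_CorrPhi}. For the first part, I would observe that the right-hand side $\partial_i(g^m_{ik})$ is in divergence form with $g^m_k \in \LL^2(\R^d)$: indeed, each summand in \eqref{Defg} is a bounded matrix (controlled via \eqref{Ellipticite}) multiplied either by $\chi_m$, which is supported in the bounded-in-$x^\perp$ slab $\{|x^\perp| \le L(r_m)\}$ intersected with the annulus $\Supp(\eta_m - \eta_{m-1}) \subseteq \Boule(0,r_m)$, or by $\nabla\big((\eta_m - \eta_{m-1}) - \chi_m\big)$, whose $\LL^\infty$-norm is $\lesssim L(r_m)^{-1}$ and which is likewise supported in a bounded region; in all cases $\check\phi_j, \check\sigma_{ilj}$ and $\partial_j P_k$ are at worst locally bounded polynomially, so $g^m_k$ has compact support and hence lies in $\LL^2$. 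Therefore the bilinear form $(v,w)\mapsto \int_{\R^d} \nabla v \cdot a \nabla w$ on $\dot H^1(\R^d)$ (using $d\ge 2$; for $d=2$ one works modulo constants or on the homogeneous space) together with the bounded functional $w \mapsto -\int g^m_k \cdot \nabla w$ yields, by Lax--Milgram, a unique $\tilde\phi^m_k \in \dot H^1(\R^d)$ solving \eqref{Defwm} weakly; testing the equation with $\tilde\phi^m_k$ itself and using ellipticity and Cauchy--Schwarz gives \eqref{Estim1} immediately.

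For the second part I would simply unfold the definition \eqref{Ansatz2} and compute $-\div\big(a(\nabla\phi^M_k + \eta_M \nabla P_k) - \eta_M \bar a \nabla P_k\big)$. Writing $\phi^M_k = \sum_{m=0}^M \tilde\phi^m_k + (\eta_M - \sum_{m=0}^M \chi_m)\check\phi\cdot\nabla P_k$ and using that each $\tilde\phi^m_k$ solves \eqref{Defwm}, the contribution of the $\tilde\phi^m_k$ terms is exactly $\sum_{m=0}^M \partial_i(g^m_{ik})$. It then remains to show that this sum cancels the divergence of the remaining pieces. The key point is that $\check\phi$ and $\check\sigma$ satisfy, on each side of the interface, the corrector relations inherited from $\phi_\pm, \sigma_\pm$ (equations \eqref{Def_Corrpm}--\eqref{Def_Npm} and the ungauged identities \eqref{Def_Pot}, \eqref{PotSym}, transported by $\nabla P$ via \eqref{DefPj}), and that the transmission condition built into $P_j$ in \eqref{DefPj} is precisely what makes $-\div(\bar a \nabla P_k) = 0$ across $\mathcal I$. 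A telescoping sum $\sum_{m=0}^M(\eta_m - \eta_{m-1}) = \eta_M$ and $\sum_{m=0}^M \chi_m = \sum_{m=0}^M S_m(\eta_m - \eta_{m-1})$ reorganizes the cut-off factors so that every term regroups into $-\div\big((1-(\text{cut-off}))\,[\,\cdots\,]\big)$ matching the algebraic identity \eqref{Algebre} localized by $\eta_M$; carrying this through yields \eqref{Loc_CorrPhi}. Finally, \eqref{PropPhiM} follows because on $\Boule(0,r_{M-1})$ one has $\eta_M \equiv 1$ (by \eqref{DefetaM}, since $r_M - L(r_M) \ge r_{M-1}$ for $r_0$ chosen large enough), so \eqref{Loc_CorrPhi} reduces to $-\div(a(\nabla\phi^M_k + \nabla P_k)) = 0$ there.

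The main obstacle is the bookkeeping in the algebraic cancellation: one must track exactly which cut-off function ($\eta_m$, $\eta_{m-1}$, $\chi_m$, $S_m$) multiplies which object ($a - \bar a$, $a_{il}\check\phi_j$, $\check\sigma_{ilj}$, $\nabla P_k$), and verify that after the telescoping the ``error'' commutators $a\,\check\phi\cdot\nabla(\text{cut-off})$ and $\check\sigma\cdot\nabla(\text{cut-off})$ produced by moving the cut-offs past the divergence are precisely the terms appearing in $g^m_{ik}$ via $\partial_l((\eta_m - \eta_{m-1}) - \chi_m)$. This is the interface analogue of the standard two-scale computation \eqref{Algebre}, but the presence of the ``trumpet'' cut-off $\chi = \sum \chi_m$ and the need to keep $S_m \equiv 1$ near the interface (so that $a_\circ$ inside the layer never needs to be understood) makes the identity genuinely more delicate than in the whole-space case; the skew-symmetry \eqref{PotSym} of $\check\sigma$ is what ultimately rescues the computation, exactly as in Step 2 of the proof of Lemma \ref{LemCore}. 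The $\LL^2$-bound \eqref{Estim1} and the existence statement are, by contrast, routine.
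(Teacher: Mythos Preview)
Your proposal is correct and follows essentially the same route as the paper: Lax--Milgram for the existence and the energy estimate \eqref{Estim1}, then the algebraic verification that plugging the ansatz \eqref{Ansatz2} into \eqref{Loc_CorrPhi} produces exactly $\sum_m \partial_i(g^m_{ik})$, via the telescoping $\sum_{m=0}^M(\eta_m-\eta_{m-1})=\eta_M$ and the skew-symmetry \eqref{PotSym} of $\check\sigma$. The only minor omission is that in deducing \eqref{PropPhiM} from \eqref{Loc_CorrPhi} with $\eta_M\equiv 1$ you also need $\div(\bar a\nabla P_k)=0$ from \eqref{DefPj}, which you noted earlier but should invoke again at that final step.
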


\begin{remark}[Discontinuities through the interface]\label{RkConv} In order to make sense of $g^m$ in \eqref{Defg}, we notice that the functions $\check{\phi}$ and $\check{\sigma}$ are (generically) discontinuous through the interface $\mathcal{I}$. This is, however, not a problem as $(1-\chi)$ and $\nabla \chi$ vanish in a neighborhood of the interface and, therefore, possible singularities of distributions multiplied by these functions in this neighborhood are not important. To illustrate this, we remark that apriori the formal product $\partial_l \check{\phi}_j \partial_j P_k$ has no mathematical significance for $x^\perp=0$  (even in the sense of distributions), but the expression 
\begin{align*}
\lt((1-\chi) \partial_l \check{\phi}_j \partial_j P_k\rt)(x):=
\lt\{
\begin{aligned}
&0  &&\quad\text{if}\quad x \notin \Supp\lt((1-\chi)\rt),
\\
&\lt((1-\chi) \partial_l \check{\phi}_j \partial_j P_k\rt)(x) &&\quad\text{if}\quad x \in  \Supp\lt((1-\chi)\rt)
\end{aligned}
\rt.
\end{align*}
is well-defined. From now on, we will use this and all analogous conventions without further notice.
\end{remark}

\begin{proof}[Proof of Lemma \ref{LemConstW}]
The matter of the existence of $\tilde{\phi}_k^m$ and the energy estimate \eqref{Estim1} can be settled using a standard Lax-Milgram argument. 

By the definition \eqref{DefetaM} of $\eta_M$, \eqref{PropPhiM} is an obvious consequence of \eqref{Loc_CorrPhi} and \eqref{DefPj}. Then, by  \eqref{Defwm} and \eqref{Defg}, there holds:
\begin{align}
\label{Eqphim}
&-\div\lt( a \nabla \lt( \sum_{m=0}^M \tilde{\phi}^m_k \rt)\rt)
\\
& \nonumber =
\partial_i\lt(\sum_{m=0}^M\chi_m \lt(a_{ij}-\overline{a}_{ij}\rt)\partial_j P_k + \lt(a_{il} \check{\phi}_j-\check{\sigma}_{ilj}\rt)\partial_l \lt( \eta_M-\sum_{m=0}^M\chi_m\rt)\partial_j P_k\rt).
\end{align}
Notice that since
\begin{align*}
\eta_M-\sum_{m=0}^M\chi_m=\sum_{m=0}^M \lt(\eta_m-\eta_{m-1}\rt)\lt(1-S_m\rt),
\end{align*}
the right-hand side of \eqref{Eqphim} is well-defined by Remark \ref{RkConv}. 

To finish, we show that with the ansatz \eqref{Ansatz2}, the relations \eqref{Loc_CorrPhi} and \eqref{Eqphim} are equivalent. By plugging the ansatz \eqref{Ansatz2} in \eqref{Loc_CorrPhi}, we obtain:
\begin{align*}
&-\partial_i\lt(a_{ij} \sum_{m=0}^M \partial_j \tilde{\phi}^M_k\rt)
\\
&=\partial_i \Bigg( \lt(\eta_M-\sum_{m=0}^M\chi_m\rt) a_{il}\partial_l \check{\phi}_j\partial_j P_k+ a_{il}\partial_l\lt(\eta_M-\sum_{m=0}^M\chi_m\rt)\check{\phi}_j\partial_j P_k
\\
&~~~~~+\eta_M\lt(a_{ij}-\overline{a}_{ij}\rt)\partial_j P_k \Bigg)
\\
&=\partial_i \Bigg( \lt(\eta_M-\sum_{m=0}^M\chi_m\rt) \lt(a_{il}\partial_l \check{\phi}_j +a_{ij}-\overline{a}_{ij} \rt)\partial_j P_k
\\
&~~~~~~+ a_{il}\partial_l\lt(\eta_M-\sum_{m=0}^M\chi_m\rt)\check{\phi}_j\partial_j P_k
+\sum_{m=0}^M\chi_m\lt(a_{ij}-\overline{a}_{ij}\rt)\partial_jP_k \Bigg).
\end{align*}
By the definition and skew-symmetry of $\check{\sigma}$, the first term on the right-hand side reads
\begin{align*}
\partial_i \lt( \theta^M \lt(a_{il}\partial_l \check{\phi}_j +a_{ij}-\overline{a}_{ij} \rt)\partial_j P_k\rt)
&=-\partial_i \lt( \theta^M \partial_l\check{\sigma}_{lij} \partial_j P_k\rt)
\\
&=- \partial_l\check{\sigma}_{lij}\partial_i \lt( \theta^M \partial_j P_k\rt)
\\
&=-\partial_l\lt(\check{\sigma}_{lij}\partial_i \lt( \theta^M \partial_j P_k\rt)\rt)
\\
&=-\partial_l\lt(\check{\sigma}_{lij}\partial_i \theta^M \partial_j P_k\rt),
\end{align*}
where we used $\theta^M:=\eta_M-\sum_{m=0}^M\chi_m$ for brevity.
Whence \eqref{Eqphim} is established.
\end{proof}

With the local correctors $\phi^M$ from Lemma \ref{LemConstW}, we can now build local ungauged flux correctors:

\begin{lemma}\label{LemConstS}
Let $g^m$, $\tilde{\phi}^m$ and $\phi^M$ be defined as in Lemma \ref{LemConstW}. For $M \geq 0$ and $j,k \in  \[ 1 , d \]$, assume that there exists a function $\widetilde{N}^{M}_{jk} \in \HH^1_{\rm{loc}}(\R^d)$ that satisfies 
\begin{align}\label{DefN}
\Delta \widetilde{N}^M_{jk}=-\sum_{m=0}^M \lt(g^m_{jk}+ a_{jl} \partial_l \tilde{\phi}^m_k\rt) \quad\text{in}\quad \R^d.
\end{align}
If we, furthermore, assume $\nabla \widetilde{N}^M$ to be strictly sublinear, then $\sigma^{M, {\rm u}}$ defined by
\begin{align}
\sigma^{M, {\rm u}}_{ijk}=&\lt(\partial_i \widetilde{N}^{M}_{jk} - \partial_j \widetilde{N}^{M}_{ik}\rt)+\lt(\eta_M-\sum_{m=0}^M\chi_m\rt) \check{\sigma}_{ijl} \partial_l P_k
\label{DefSM}
\end{align}
is a ``local ungauged flux corrector associated with $\phi^M$''. In other words, it satisfies \eqref{PotSym} in $\R^d$ and is a weak solution of 
\begin{align}\label{EqFondaW2}
\partial_i \sigma^{M, {\rm u}}_{ijk} 
=\eta_M  \overline{a}_{jl} \partial_l P_k-a_{jl}\lt( \partial_l \phi_k^M + \eta_M  \partial_l P_k \rt)\quad\text{in}\quad \R^d,
\end{align}
which implies that
\begin{align}\label{EqFondaW3}
\partial_i \sigma^{M, {\rm u}}_{ijk}=
\overline{a}_{jl}  \partial_l P_k -a_{jl} \lt(\partial_l P_k + \partial_l \phi^M_k \rt) \quad\text{in}\quad \Boule\lt(0,r_{M-1}\rt).
\end{align}
\end{lemma}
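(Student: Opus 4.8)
The strategy is to verify the three claimed properties of $\sigma^{M,{\rm u}}$ directly from its definition \eqref{DefSM} and the equation \eqref{DefN} for $\widetilde{N}^M$, exploiting that the correction term $(\eta_M - \sum_m \chi_m)\check\sigma_{ijl}\partial_l P_k$ is supported away from the interface so that all products are well-defined in the sense of Remark \ref{RkConv}. The skew-symmetry \eqref{PotSym} is immediate: the term $\partial_i \widetilde N^M_{jk} - \partial_j \widetilde N^M_{ik}$ is manifestly antisymmetric under $i\leftrightarrow j$, and $\check\sigma_{ijl}$ inherits the skew-symmetry \eqref{PotSym} from the $(\sigma_\pm)_{ijl}$ in its definition \eqref{Def_Corrpm}, while the scalar factor $(\eta_M - \sum_m\chi_m)\partial_l P_k$ is symmetric (it does not carry the indices $i,j$). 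So $\sigma^{M,{\rm u}}_{ijk} = -\sigma^{M,{\rm u}}_{jik}$.

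\textbf{The divergence identity.} For \eqref{EqFondaW2} I would compute $\partial_i \sigma^{M,{\rm u}}_{ijk}$ term by term. From the first group, $\partial_i(\partial_i \widetilde N^M_{jk} - \partial_j \widetilde N^M_{ik}) = \Delta \widetilde N^M_{jk} - \partial_j (\partial_i \widetilde N^M_{ik})$. Using \eqref{DefN} twice, this equals
\begin{align*}
-\sum_{m=0}^M\lt(g^m_{jk} + a_{jl}\partial_l\tilde\phi^m_k\rt) + \partial_j\lt(\sum_{m=0}^M \partial_i^{-1}\,\ldots\rt)
\end{align*}
— more carefully: $\partial_i\Delta^{-1}$ applied to the right-hand side of \eqref{DefN} gives a gradient whose divergence in $j$ we must track. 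The cleanest route is: $\partial_i(\partial_i \widetilde N^M_{jk}) = \Delta \widetilde N^M_{jk}$ which by \eqref{DefN} is $-\sum_m (g^m_{jk} + a_{jl}\partial_l\tilde\phi^m_k)$; and $\partial_i(\partial_j \widetilde N^M_{ik}) = \partial_j(\partial_i \widetilde N^M_{ik})$, where again by \eqref{DefN}, $\partial_i \widetilde N^M_{ik}$ is (up to harmonic ambiguity, which drops under $\partial_j$ only if it is genuinely a constant — here one should note $\widetilde N^M$ is determined up to affine, so $\partial_i \widetilde N^M_{ik}$ is determined up to a constant and $\partial_j$ of it is well-defined) a primitive of $-\sum_m\partial_i(g^m_{ik} + a_{il}\partial_l\tilde\phi^m_k)$. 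But by Lemma \ref{LemConstW}, $-\partial_i(g^m_{ik}) = \div(a\nabla\tilde\phi^m_k)$ in the sense of \eqref{Defwm}... actually \eqref{Defwm} reads $-\div(a\nabla\tilde\phi^m_k) = \partial_i g^m_{ik}$, i.e. $\partial_i(g^m_{ik} + a_{il}\partial_l\tilde\phi^m_k) = 0$. Hence $\partial_i \widetilde N^M_{ik}$ is constant in space (up to the affine ambiguity), so $\partial_j\partial_i\widetilde N^M_{ik} = 0$. Therefore the first group contributes exactly $-\sum_m(g^m_{jk} + a_{jl}\partial_l\tilde\phi^m_k)$. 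For the second group, $\partial_i\big((\eta_M-\sum_m\chi_m)\check\sigma_{ijl}\partial_l P_k\big)$: since $\check\sigma_\pm$ satisfies $\partial_i(\check\sigma_\pm)_{ijl} = \overline{a}_{\pm,jm}\partial_m(\phi_\pm\text{-stuff})$ — more precisely the analogue of \eqref{Def_Pot}, namely $\partial_i(\sigma_\pm)_{ijl} = (\overline a_\pm)_{jl} - (a_\pm)_{jm}(\delta_{ml} + \partial_m(\phi_\pm)_l)$ — one distributes $\partial_i$ onto the cutoff and onto $\check\sigma$, collects the $\partial_l P_k$ factor, and matches against the definition \eqref{Defg} of $g^m$ together with $\sum_m \chi_m(a_{ij}-\overline a_{ij})$ and the layer-correction terms. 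Summing \eqref{Defg} over $m$, the telescoping $\sum_m(\eta_m - \eta_{m-1}) = \eta_M$ and $\sum_m\chi_m$ combine so that everything collapses to the right-hand side of \eqref{EqFondaW2}; this is exactly the algebraic bookkeeping that \eqref{Eqphim} already performed for $\phi^M$, run through the relation $\Delta N = \div(\ldots)$ that defines the flux corrector (compare \eqref{Def2_Pot}–\eqref{Def_Pot}).

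\textbf{Localization and the main obstacle.} Equation \eqref{EqFondaW3} then follows from \eqref{EqFondaW2} by the observation that $\eta_M \equiv 1$ and $\phi^M_k = \phi^M_k$ with the glued-corrector contribution active only outside $\Boule(0, r_{M-1})$: on $\Boule(0,r_{M-1})$ we have $\eta_M = 1$ (by \eqref{DefetaM}, since $r_{M-1} = r_M/2 \le r_M - L(r_M)$ for $r_0$ large enough — this is where the choice of $L$ in \eqref{DefLpm} and $r_0$ enters), so the right side of \eqref{EqFondaW2} becomes $\overline a_{jl}\partial_l P_k - a_{jl}(\partial_l\phi^M_k + \partial_l P_k)$, which is \eqref{EqFondaW3}. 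The genuinely delicate point — and the one I would be most careful about — is the well-definedness of all the products involving $\check\sigma$ and $\check\phi$ across the interface $\mathcal I$, and relatedly, justifying that $\partial_j\partial_i\widetilde N^M_{ik} = 0$ given that $\widetilde N^M$ is only defined up to an affine function (so $\partial_i\widetilde N^M_{ik}$ is a priori only determined modulo a constant, and one must check the ambiguity genuinely is a \emph{constant}, not a general harmonic function; this uses the specific structure $\Delta\widetilde N^M = $ divergence-free-corrected data on all of $\R^d$ together with the strict sublinearity hypothesis on $\nabla\widetilde N^M$ and a Liouville argument, e.g. the one in Appendix \ref{A}). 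The interface issue is handled exactly as in Remark \ref{RkConv}: $(1-\chi)$ and $\nabla\chi$ vanish near $\mathcal I$, so $(\eta_M - \sum_m\chi_m)\check\sigma$ and the terms $\partial_l((\eta_m-\eta_{m-1})-\chi_m)\check\sigma$ in $g^m$ are supported where $\check\sigma$ is smooth, and all manipulations above are legitimate in the sense of distributions on $\R^d$.
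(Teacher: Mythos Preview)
Your proposal is correct and follows essentially the same approach as the paper: skew-symmetry is immediate from the definition, the key step $\partial_j\partial_i\widetilde N^M_{ik}=0$ is obtained exactly as you indicate (the right-hand side of \eqref{DefN} is divergence-free by \eqref{Defwm}, so $\partial_i\widetilde N^M_{ik}$ is harmonic and then constant by the Liouville principle together with the strict sublinearity of $\nabla\widetilde N^M$), and the remaining algebra with the $\check\sigma$-term and the definition \eqref{Defg} of $g^m$ collapses to \eqref{EqFondaW2} just as the paper computes explicitly. Your identification of the Liouville step as the ``delicate point'' is spot on; the only cosmetic difference is that the paper carries out the final bookkeeping line by line rather than appealing to the analogy with \eqref{Eqphim}.
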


\begin{proof}[Proof of Lemma \ref{LemConstS}] 
Notice that that $\sigma^{M,{\rm u}}$ defined by \eqref{DefSM} naturally satisfies \eqref{PotSym}. The only part that remains to be checked is \eqref{EqFondaW2}. For this, we recall from Lemma \ref{LemConstW} that $g^m+a \nabla \tilde{\phi}^m$ is divergence-free, which, when combined with \eqref{DefN}, implies that 
$\Delta \partial_j \widetilde{N}^M_{jk}=0$.
As a consequence, by the first order Liouville principle for harmonic functions, $ \partial_j \widetilde{N}^M_{jk}$ is constant.
We then have that
\begin{align}
\partial_{ii} \widetilde{N}^M_{jk} - \partial_j\partial_i\widetilde{N}^M_{ik}
=-\sum_{m=0}^M \lt(g^m_{jk}+a_{jl}\partial_l \tilde{\phi}^m_k\rt).
\label{3}
\end{align}
Whence, by the definitions of $\sigma^{M, {\rm u}}$ and $\check{\sigma}$, and as a consequence of \eqref{3}, we obtain that 
\begin{align*}
\partial_i \sigma^{M, {\rm u}}_{ijk} 
=&
\partial_{ii} \widetilde{N}^M_{jk} - \partial_j\partial_i\widetilde{N}^M_{ik}+\partial_i\lt(\lt(\eta_M-\sum_{m=0}^M\chi_m\rt) \check{\sigma}_{ijl} \partial_l P_k\rt)
\\
=&\sum_{m=0}^M \lt(-g^m_{jk}-a_{jl}\partial_l \tilde{\phi}^m_k\rt)
+\partial_i\lt(\eta_M-\sum_{m=0}^M\chi_m\rt) \check{\sigma}_{ijl} \partial_l P_k 
\\
&+\lt(\eta_M-\sum_{m=0}^M\chi_m\rt) \lt(\overline{a}_{jl} - a_{jl} - a_{jh}\partial_h\check{\phi}_l\rt) \partial_l P_k.
\end{align*}
By definition \eqref{Defg} of $g$, \eqref{Ansatz2}, and the antisymmetry of $\check{\sigma}$:
\begin{align*}
\partial_i \sigma^{M, {\rm u}}_{ijk} 
=&\lt(-\sum_{m=0}^M\chi_m \lt(a_{jl}-\overline{a}_{jl}\rt)-\lt(a_{ji} \check{\phi}_l-\check{\sigma}_{jil}\rt)\partial_i \lt(\eta_M-\sum_{m=0}^M\chi_m\rt)\rt)\partial_l P_k
\\
&-a_{ji}\partial_i \lt(\phi^M_k -\lt(\eta_M-\sum_{m=0}^M\chi_m\rt)\check{\phi}_l\partial_lP_k\rt)
\\
&-\check{\sigma}_{jil}\partial_i\lt(\eta_M-\sum_{m=0}^M\chi_m\rt)  \partial_l P_k
\\
&+\lt(\eta_M-\sum_{m=0}^M\chi_m\rt) \lt(\overline{a}_{jl} - a_{jl} - a_{jh}\partial_h\check{\phi}_l\rt) \partial_l P_k
\\
=&\eta_M \lt( \lt(\overline{a}_{jl} - a_{jl}\rt)-a_{jl}\partial_l \phi^M_k \rt)  \partial_l P_k,
\end{align*}
which establishes \eqref{EqFondaW2}.
\end{proof}

\subsection{Inductive use of large-scale Lipschitz regularity} \label{SecProofConst}

\begin{lemma}\label{LemRecurrent}
Let $\alpha=1/2$, $M \geq 0$, $\nu \in (0,1]$, and $x_0 \in \R^d$. Assume that there exists a local ungauged generalized corrector $\lt(\phi^M,\sigma^{M,{\rm u}}\rt)$ on $\Boule(x_0,r_{M-1})$ that simultaneously satisfies the growth conditions \eqref{Condition_delta}, and \eqref{CorrSub} for $r^\star=r_0 \geq 1$ and $r_{\max}=r_{M-1}$ and for $\delta:=\delta(d,\lambda,1/2)$ as in Theorem \ref{ThAL}.

Under these assumptions, there exists a local corrector $\phi^{M+1}$, namely a solution of  \eqref{Loc_CorrPhi}, and a local ungauged flux corrector $\sigma^{M+1,\rm u}$, namely a solution of \eqref{EqFondaW2} satisfying  \eqref{PotSym}, such that for any $r\geq r_0$ the following estimates hold:
\begin{align}
\frac{1}{r}\lt(\fint_{\Boule(x_0,r)} \lt|\phi^{M+1}-\fint_{\Boule(x_0,r)}\phi^{M+1} \rt|^2\rt)^{\frac{1}{2}} \leq C(d,\lambda) r^{-\nu/3},
\label{Estimphim1bis}
\end{align}
and
\begin{align}
&\frac{1}{r}\lt(\fint_{\Boule(x_0,r)} \lt|\sigma^{M+1,\rm u} - \fint_{\Boule(x_0,r)} \sigma^{M+1,\rm u} \rt|^2 \rt)^{\frac{1}{2}}
\leq C(d,\lambda) r^{-\nu/3},
\label{Estimsigmam1}
\end{align}
for a constant $C(d,\lambda)$ only depending on $d$ and $\lambda$.
\end{lemma}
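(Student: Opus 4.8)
The plan is to prove Lemma \ref{LemRecurrent} by an explicit construction following the recipe laid out in Lemmas \ref{LemConstW} and \ref{LemConstS}, using the large-scale Lipschitz estimate of Theorem \ref{ThAL} exactly once (on the scale $r_M$) to pass from level $M$ to level $M+1$. First I would construct $\widetilde{\phi}^{M+1}_k$ and $\widetilde{N}^{M+1}_{jk}$ by solving \eqref{Defwm} and \eqref{DefN} via Lax--Milgram, which immediately furnishes the energy bound \eqref{Estim1} for $\widetilde{\phi}^{M+1}$; then $\phi^{M+1}_k$ and $\sigma^{M+1,\mathrm{u}}_{ijk}$ are defined by the formulas \eqref{Ansatz2} and \eqref{DefSM}. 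Lemmas \ref{LemConstW} and \ref{LemConstS} then guarantee that $(\phi^{M+1},\sigma^{M+1,\mathrm u})$ solve \eqref{Loc_CorrPhi} and \eqref{EqFondaW2}--\eqref{PotSym} on all of $\R^d$. The entire content of the lemma is therefore the two sublinearity estimates \eqref{Estimphim1bis} and \eqref{Estimsigmam1}.

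The core of the argument is to estimate the increments $\nabla\widetilde{\phi}^{M+1}$ and $\nabla\widetilde N^{M+1}$ — equivalently, the difference between the level-$M$ and level-$(M+1)$ objects — on the annulus $\Boule(x_0,r_{M+1})\setminus\Boule(x_0,r_{M-1})$, where the only new contribution lives. The source $g^{M+1}$ defined in \eqref{Defg} is supported in the ``trumpet'' piece $\chi_{M+1}$ together with the transition region where $\eta_{M+1}-\eta_M$ varies; its $\LL^2$-mass is controlled by the measure of this thin set, which for the layer width $L(r_{M+1})\simeq r_{M+1}^{1-2\nu/3}$ scales like $\bigl(L(r_{M+1}) r_{M+1}^{d-1}\bigr)^{1/2}$ times the size of $(\check\phi,\check\sigma)$ there, which by \eqref{Condition_delta} is at most $\simeq r_{M+1}^{1-\nu}$. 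Plugging the energy estimate \eqref{Estim1} into this and dividing by $r_{M+1}$ and by $|\Boule(x_0,r_{M+1})|^{1/2}$, one gets a normalized $\LL^2$ increment of order $r_{M+1}^{-\nu/3}$ — this is precisely why $L$ is chosen as in \eqref{DefLpm}, so that the two competing powers balance. Summing these increments geometrically over the dyadic scales up to $M+1$ then yields the bound $r^{-\nu/3}$ on each dyadic annulus $\Boule(x_0,r)\setminus\Boule(x_0,r/2)$, and hence — after a standard telescoping/averaging argument combined with the hypothesis \eqref{CorrSub} already valid on $\Boule(x_0,r_{M-1})$ — the claimed estimates for all $r\ge r_0$, with a constant depending only on $d,\lambda$ (the $\nu$-dependence being absorbed because $c_p$ does not enter this qualitative step). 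For $\nabla\widetilde N^{M+1}$ one additionally uses the Calderón--Zygmund / Poincaré-type estimate for the Laplacian to convert the $\LL^2$ control of the right-hand side of \eqref{DefN} into control of $\nabla\widetilde N^{M+1}$, after subtracting its mean on each ball.

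The step where Theorem \ref{ThAL} is genuinely needed — and the main obstacle — is controlling the part of $\nabla\phi^{M+1}$ coming through the glued term $(\eta_{M+1}-\sum\chi_m)\check\phi\cdot\nabla P_k$ on the new annulus, where $\check\phi$ itself grows like $r^{1-\nu}$ and a crude bound would only give $r^{1-\nu}$, not $r^{1-\nu/3}$. Here one invokes the large-scale Lipschitz estimate \eqref{LargeLip} for the $a$-harmonic function $\phi^M_k+P_k$ (which, by \eqref{PropPhiM}, is $a$-harmonic on $\Boule(x_0,r_{M-1})$ with $r_{M-1}$ of order $r_M$): this lets one bound $\fint_{\Boule(x_0,r)}|\nabla\phi^{M}|^2$ for $r\le r_{M-1}$ by its value at scale $r_{M-1}$, which is controlled by the inductive hypothesis \eqref{CorrSub}/\eqref{Condition_delta}. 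The delicate point is verifying that the hypotheses of Theorem \ref{ThAL} — in particular the smallness \eqref{CorrSub} with the specific $\delta=\delta(d,\lambda,1/2)$ — are met on the whole range $[r_0,r_{M-1}]$; this is exactly where the choice of $r_0$ (``to be fixed later'') is used: one takes $r_0$ large enough, depending on $d,\lambda,\nu$ and the constant $c_p$ from \eqref{Condition_delta}, so that $r_0^{-\nu}\le\delta$, and then propagates this smallness inductively. I expect the bookkeeping of which constants depend on what — making sure the final $C$ depends only on $d$ and $\lambda$ and not on $M$ or $\nu$ — to be the most error-prone part of the write-up, together with the careful treatment (via Remark \ref{RkConv}) of the discontinuities of $\check\phi,\check\sigma$ across the interface inside the various products.
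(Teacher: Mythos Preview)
Your high-level plan is right, and you correctly identify that the scaling $\int_{\R^d}|g^m|^2\lesssim r_m^{d-2\nu/3}$ (optimized by the choice \eqref{DefLpm}) is what produces the exponent $\nu/3$. But you have misidentified where Theorem~\ref{ThAL} enters, and this leaves a genuine gap.

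The glued term $(\eta_{M+1}-\sum_m\chi_m)\,\check\phi\cdot\nabla P$ is in fact the \emph{easy} part: by \eqref{Condition_delta} its normalized oscillation at scale $r$ is $\lesssim r^{-\nu}$, which is \emph{stronger} than the $r^{-\nu/3}$ you need (recall $1-\nu\le 1-\nu/3$), so no regularity theory is required there. The obstacle is controlling $\sum_{m=0}^{M+1}\tilde\phi^m$ on a ball $\Boule(x_0,r)$ with $r$ small compared to $r_{M+1}$. For the terms with $r_m\gg r$, the global energy bound \eqref{Estim1} gives only $\fint_{\Boule(x_0,r)}|\nabla\tilde\phi^m|^2\lesssim r^{-d}r_m^{d-2\nu/3}$, and summing this over $m$ diverges geometrically. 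What saves you is that $g^m$ is supported outside $\Boule(x_0,r_{m-2})$, so each $\tilde\phi^m$ is $a$-harmonic there; applying \eqref{LargeLip} \emph{to each $\tilde\phi^m$ separately} (this is precisely where the hypothesis \eqref{CorrSub} on $(\phi^M,\sigma^{M,\rm u})$ is used, to make Theorem~\ref{ThAL} available on the range $[r_0,r_{M-1}]$) propagates the energy bound from scale $r_{m-2}$ down to scale $r$, yielding $\fint_{\Boule(x_0,r)}|\nabla\tilde\phi^m|^2\lesssim r_m^{-2\nu/3}$ uniformly in $r\in[r_0,r_{m-2}]$. Now the sum over $m$ is $\sum_m\min(1,(r_m/r)^{d/2})\,r_m^{-\nu/3}\lesssim r^{-\nu/3}$. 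By contrast, your proposed application of \eqref{LargeLip} to $\phi^M+P$ yields only $\fint_{\Boule(x_0,r)}|\nabla\phi^M|^2\lesssim 1$, a bound independent of $r$; and the hypothesis \eqref{CorrSub} itself says nothing quantitative beyond the fixed smallness $\delta$, so neither route gives you \eqref{Estimphim1bis}.

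For $\sigma^{M+1,\rm u}$ there is a second point you gloss over. The right-hand side of \eqref{DefN} is not in divergence form and grows at infinity, so a bare Calder\'on--Zygmund or Poincar\'e estimate does not produce a strictly sublinear $\nabla\widetilde N^{M+1}$. The paper first uses that $\sum_m(g^m+a\nabla\tilde\phi^m)$ is divergence-free to rewrite $\Delta\widetilde N^{M+1}_{jk}=-\partial_i\big((x-x_0)\cdot e_j\sum_m(g^m_{ik}+a_{il}\partial_l\tilde\phi^m_k)\big)$; the Step~1 bound on $\nabla\tilde\phi^m$ then controls the new right-hand side in the sense of \eqref{ee0}, and Lemma~\ref{LemDelta} both constructs $\widetilde N^{M+1}$ and delivers the sublinearity \eqref{Estimsigmam1}.
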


\begin{proof}
Using the objects that we have defined in Lemmas \ref{LemConstW} and \ref{LemConstS}, we proceed in three steps: We first establish an estimate on $\nabla \tilde{\phi}^m$ by appealing to Theorem \ref{ThAL}, from which we deduce \eqref{Estimphim1bis} in a second step. Then, to finish, we show \eqref{Estimsigmam1} by using the result of the first step and basic facts about harmonic functions.

\paragraph{Step 1: Energy estimate} We first show that, for any $r\geq r_0$ and $m\leq M+1$, the functions $\tilde{\phi}^m$ from Lemma \ref{LemConstW} satisfy the following estimate
\begin{align}
\label{Estimphim0}
\fint_{\Boule(x_0,r)} \lt|\nabla \tilde{\phi}^m\rt|^2  \lesssim_{d,\lambda} \min\lt(1, \lt( \frac{r_m}{r} \rt)^d \rt) r_m^{-2\nu/3} .
\end{align}

To obtain \eqref{Estimphim0}, we use \eqref{Defg} to write:
\begin{align}
\int_{\R^d} \lt|g^m\rt|^2
\lesssim &
\int_{\R^d}\chi_m^2 + \int_{\R^d}\lt|(\check{\phi},\check{\sigma})\rt|^2 \lt|\nabla\lt((\eta_m-\eta_{m-1})-\chi_m\rt) \rt|^2.
\label{Estim2}
\end{align}
Since the function $\chi_m$ has a localized support, \textit{i.e.} we have that 
\begin{align*}
\Supp(\chi_m) \subset \Boule(x_0,r_{m}) \backslash \Boule(x_0,r_{m-2}) \cap \lt\{ x \in \R^d \, : \,  \lt|x^\perp\rt| \leq L(r_{m}) \rt\},
\end{align*}
the first integral on the right-hand side of \eqref{Estim2} is bounded as
\begin{align*}
\int_{\R^d}\chi_m^2  \lesssim r_m^{d-1} L(r_m).
\end{align*}
For the second integral we use the decomposition
\begin{align*}
\nabla \lt(\eta_m-\eta_{m-1} -\chi_m\rt) = (1-S_m)\lt(\nabla \eta_m - \nabla \eta_{m-1}\rt) -  \lt(\eta_{m}-\eta_{m-1}\rt)\nabla S_m
\end{align*}
and properties of the cut-off functions defined in Section \ref{SecEquatConst} to estimate
\begin{align*}
&\int_{\R^d}\lt|(\check{\phi},\check{\sigma})\rt|^2 \lt|\nabla\lt((\eta_m-\eta_{m-1})-\chi_m\rt) \rt|^2
\\
&\lesssim
\int_{\Boule(x_0,r_m)} \lt|(\check{\phi},\check{\sigma})\rt|^2 L(r_m)^{-2}
+\int_{\Boule(x_0,r_{m})} \lt|(\check{\phi},\check{\sigma})\rt|^2 L(r_{m-1})^{-2}
\\
&\lesssim  r_m^{d+2(1-\nu)} L(r_m)^{-2} + r_{m}^{d+2(1-\nu)} L(r_{m-1})^{-2}.
\end{align*}
Notice that we have also used the growth condition \eqref{Condition_delta}.

As a consequence,
\begin{align}\label{Num:1010}
\int_{\R^d} \lt|g^m\rt|^2 
\lesssim
r_m^{d}\lt( \frac{L(r_m)}{r_m} + \frac{r_m^{2(1-\nu)}}{L(r_m)^2}\rt) + r_{m-1}^{d}\frac{r_{m-1}^{2(1-\nu)}}{L(r_{m-1})^2}.
\end{align}
Recalling that $r_m = 2^mr_0$, \eqref{DefLpm} appears to be the optimal choice and plugging it in  \eqref{Num:1010} yields:
\begin{align}
\label{Optichoice}
\begin{split}
\int_{\R^d} \lt|g^m\rt|^2 
&
\lesssim r_m^d r_m^{-2\nu/3}.
\end{split}
\end{align}
Therefore, from \eqref{Estim1} and \eqref{Optichoice}, we obtain \eqref{Estimphim0} for $r \geq r_{m-2}$.

For the case $r \leq r_{m-2}$, we can use Theorem \ref{ThAL} for which, up to the scale $r_02^{M-1}$, it is sufficient to have a local ungauged generalized corrector in $\Boule(x_0,r_{M-1})$. Since $\tilde{\phi}^m$ is $a$-harmonic in $\Boule(x_0,r_{m-2}) \subset \Boule(x_0,r_{M-1})$, this entails
\begin{align*}
\fint_{\Boule(x_0,r)} \lt|\nabla \tilde{\phi}^m\rt|^2 \lesssim  \fint_{\Boule(x_0,r_{m-2})} \lt|\nabla \tilde{\phi}^m\rt|^2 \lesssim r_m^{-2\nu/3} ,
\end{align*}
where we have used \eqref{Estimphim0} for $r = r_{m-2}$. This finishes the argument for \eqref{Estimphim0} for all $r \geq r_0$. 

\paragraph{Step 2: Argument for \eqref{Estimphim1bis}}
Once \eqref{Estimphim0} is established, applying the Poincaré-Wirtinger inequality yields
\begin{align*}
\frac{1}{r} \lt(\fint_{\Boule(x_0,r)} \lt| \tilde{\phi}^m-\fint_{\Boule(x_0,r)}\tilde{\phi}^m\rt|^2\rt)^{\frac{1}{2}} & \lesssim \min\lt(1, \lt( \frac{r_m}{r} \rt)^{d/2} \rt) r_m^{-\nu/3}.
\end{align*}
Whence, by definition \eqref{Ansatz2} combined with the triangle inequality, \eqref{Condition_delta}, and recalling  $r_m=2^mr_0$, we obtain:
\begin{align}
\label{Estimphim1}
\begin{split}
&\frac{1}{r}\lt(\fint_{\Boule(x_0,r)} \lt|\phi^{M+1}-\fint_{\Boule(x_0,r)}\phi^{M+1} \rt|^2\rt)^{\frac{1}{2}} \\
&\lesssim  r^{-\nu}+ \sum_{m=0}^{M+1} \min\lt(1, \lt( \frac{r_m}{r} \rt)^{d/2} \rt)r_m^{-\nu/3} \lesssim r^{-\nu/3 },      
\end{split}
\end{align}
which produces \eqref{Estimphim1bis}.

\paragraph{Step 3: Argument for \eqref{Estimsigmam1}}      
Since the right-hand term of \eqref{DefN} is divergence-free, we can rewrite it as
\begin{align*}
\Delta \widetilde{N}^{M+1}_{jk}
=&-\partial_i\lt(\lt(x - x_0\rt)\cdot e_j \sum_{m=0}^{M+1} \lt(g^m_{ik}+a_{il}\partial_l \tilde{\phi}^m_k\rt)\rt).
\end{align*}

We would now like to invoke Lemma \ref{LemDelta}. To do this we first notice that, for $r \geq 1$, we have:
\begin{align*}
& \fint_{\Boule(x_0,r)} \lt|\lt(x - x_0\rt)\cdot e_j \sum_{m=0}^{M+1} \lt(g^m_{ik}+a_{il}\partial_l \tilde{\phi}^m_k\rt)\rt|^2 \lesssim r \sum_{m=0}^{M+1}  \left( \fint_{\Boule(x_0,r)} \lt|g^m\rt|^2+\lt|\nabla \tilde{\phi}^m\rt|^2 \right).
\end{align*}
By applying \eqref{Estimphim0} this becomes:
\begin{align*}
& r \sum_{m=0}^{M+1} \lt( \fint_{\Boule(x_0,r)} \lt|\nabla \tilde{\phi}^m\rt|^2 \rt)^{\frac{1}{2}} \lesssim  r \sum_{m=0}^{M+1}  \min\lt(1, \lt( \frac{r_m}{r} \rt)^{d/2} \rt) r_m^{-\nu/3}  \lesssim  r^{1-\nu/3}.
\end{align*}
On the other hand, since $g_m=0$ outside of $\Boule(x_0,r_{m})\backslash \Boule(x_0,r_{m-2})$, \eqref{Optichoice} implies that
\begin{align*}
r \sum_{m=0}^{M+1} \lt( \fint_{\Boule(x_0,r)} \lt|g^m\rt|^2 \rt)^{\frac{1}{2}}
&\lesssim 
r \sum_{m=0}^{\min({M+1}, \lceil r \rceil+2)} r^{-d/2} \lt(\int_{\R^d} \lt|g^m\rt|^2\rt)^{\frac{1}{2}}
\\
& \lesssim
r \sum_{m=0}^{\min({M+1},\lceil  r \rceil+2)} \lt(\frac{r_m}{r}\rt)^{d/2} r_m^{-\nu/3} \lesssim r^{1-\nu/3}.
\end{align*}
Therefore, we obtain:
\begin{align*}
&\lt( \fint_{\Boule(x_0,r)} \lt|\lt(x - x_0\rt)\cdot e_j \sum_{m=0}^{M+1} \lt(g^m_{ik}+a_{il}\partial_l \tilde{\phi}^m_k\rt)\rt|^2 \rt)^{\frac{1}{2}}
\lesssim r^{1-\nu/3}.
\end{align*}
As a consequence, Lemma \ref{LemDelta} produces a solution $\widetilde{N}^{M+1}$ to \eqref{DefN}.  Moreover, \eqref{ee2} implies that, for any $r \geq 1$, there holds:
\begin{align}
&r^{-1}\lt(\fint_{\Boule(x_0,r)} \lt|\nabla \widetilde{N}^{M+1} - \fint_{\Boule(x_0,r)} \nabla \widetilde{N}^{M+1}\rt|^2  \rt)^{\frac{1}{2}}
\lesssim  r^{-\nu/3}.
\label{ee12}
\end{align}

Finally, we define $\sigma^{M+1,\rm u}$ by \eqref{DefSM}. Then, since \eqref{DefN} is satisfied and \eqref{ee12} implies that $\nabla \widetilde{N}^{M+1}$ is strictly sublinear, by Lemma \ref{LemConstS} we know that $\sigma^{M+1,\rm u}$ is a local ungauged flux corrector and solves \eqref{EqFondaW2}.
Last, summing up \eqref{ee12} and the estimate satisfied by $\check{\sigma}$ yields \eqref{Estimsigmam1}.
\end{proof}

\subsection{Proof of Proposition  \ref{PropExistCorr}}

We  are now in a position to proceed with the:

\begin{proof}[Proof of Proposition \ref{PropExistCorr}]
As in the previous works \cite{Fischer_Raithel_2017,Fischer_Otto_2015}, the proof is done by induction.

\paragraph{Step 1: Induction}Note first that the local ungauged generalized corrector $\lt(\phi^0,\sigma^0\rt)$ satisfies \eqref{Estimphim1bis} and \eqref{Estimsigmam1}, as a straightforward corollary of the energy estimate --possibly at the price of taking a larger uniform constant. We then set $r_0 \geq 1$ such that
\begin{align}
\label{defnr0}
C(d,\lambda)r_0^{-\nu/3} \leq \delta(d,\lambda,1/2),
\end{align}
where $C(d,\lambda)$ refers to the common constant of \eqref{Estimphim1bis} and \eqref{Estimsigmam1} and $\delta(d,\lambda,1/2)$ is fixed in Theorem \ref{ThAL}. 

Next, assume that the  local ungauged generalized corrector $\lt(\phi^M,\sigma^M\rt)$ satisfies \eqref{Estimphim1bis} and \eqref{Estimsigmam1} for a given $M \in \NN$. Therefore, by \eqref{defnr0}, this local generalized corrector also satisfies the growth condition \eqref{CorrSub} for $r^\star=r_0$, $r_{\max}=r_{M-1}$, and $\delta:=\delta(d,\lambda,1/2)$. Whence, applying Lemma \ref{LemRecurrent}, we obtain that $\lt(\phi^{M+1},\sigma^{M+1}\rt)$ also satisfy \eqref{Estimphim1bis} and \eqref{Estimsigmam1}.

As a conclusion of the inductive proof, the local correctors $\lt(\phi^M,\sigma^M\rt)$ satisfy \eqref{Estimphim1bis} and \eqref{Estimsigmam1} for any $M \in \NN$.

\paragraph{Step 2: Limit $M \rightarrow +\infty$}
By a compactness argument in $\LL^2_\loc(\R^d)$, the following convergences hold up to a subsequence:
\begin{align*}
\lt(\phi^M,\sigma^M\rt) \underset{M\rightarrow +\infty}{\rightharpoonup} \lt(\phi,\sigma^{\rm u} \rt) 
\quad\text{and}\quad 
\nabla \phi^M \underset{M\rightarrow +\infty}{\rightharpoonup} \nabla \phi \quad\text{in}\quad \LL^2_\loc(\R^d).
\end{align*}
By taking the limit of the weak formulations of \eqref{PropPhiM} and \eqref{EqFondaW3}, we deduce that $\phi$ and $\sigma^{\rm u} $ respectively satisfy \eqref{DefCorr} and \eqref{Def_Pot}. 
Also, the generalized ungauged correctors $\lt(\phi,\sigma^{\rm u} \rt)$ inherit \eqref{Estimphim1bis} and \eqref{Estimsigmam1}. 
As a consequence, we have established
\begin{align}
\label{SublinSSop}
\frac{1}{r} \lt(\fint_{\Boule(x_0,r)} \lt| \lt(\phi,\sigma^{\rm u} \rt) - \fint_{\Boule(x_0,r)} \lt(\phi,\sigma^{\rm u} \rt) \rt|^2  \rt)^{\frac{1}{2}} \leq \tilde{\kappa} r^{-\tilde{\nu}} \quad \text{for any} \quad r  \geq 1.
\end{align}

\paragraph{Step 3: Post-processing \eqref{SublinSSop}}
We use a standard argument to get \eqref{SublinSSop_2} from \eqref{SublinSSop}.
Applying the Cauchy-Schwarz inequality and \eqref{SublinSSop}, there holds:
\begin{align*}
\lt|\fint_{\Boule(x_0,r)} \lt(\phi,\sigma^{\rm u} \rt)-\fint_{\Boule(x_0,2r)} \lt(\phi,\sigma^{\rm u} \rt)\rt|
&\leq
\lt(\fint_{\Boule(x_0,r)} \lt|\lt(\phi,\sigma^{\rm u} \rt)-\fint_{\Boule(x_0,2r)} \lt(\phi,\sigma^{\rm u} \rt) \rt|^2\rt)^{\frac{1}{2}}
\\
&\leq \tilde{\kappa} 2^{d/2}r^{1-\tilde{\nu}},
\end{align*}
for any $r\geq 1$. Thus, by a dyadic argument, for $2^{n-1}<r\leq 2^n$, we get
\begin{align*}
&\lt|\fint_{\Boule(x_0,r)} \lt(\phi,\sigma^{\rm u} \rt)-\fint_{\Boule(x_0,1)} \lt(\phi,\sigma^{\rm u} \rt)\rt|
\\
&\leq \sum_{j=1}^n \lt|\fint_{\Boule(x_0,2^j)} \lt(\phi,\sigma^{\rm u} \rt)-\fint_{\Boule(x_0,2^{j-1})} \lt(\phi,\sigma^{\rm u} \rt)\rt|
\\
&\quad+ \lt|\fint_{\Boule(x_0,r)} \lt(\phi,\sigma^{\rm u} \rt)-\fint_{\Boule(x_0,2^n)} \lt(\phi,\sigma^{\rm u} \rt)\rt|
\\
&\leq \tilde{\kappa} 2^{d/2} \sum_{j=1}^{n+1} 2^{j(1-\tilde{\nu})}
\leq \tilde{\kappa} \frac{2^{d/2+1}}{2^{1-\tilde{\nu}}-1} r^{1-\tilde{\nu}}.
\end{align*}
As a conclusion, combining the above inequality and \eqref{SublinSSop}, we obtain \eqref{SublinSSop_2}.
\end{proof}

\section{Argument for Theorem \ref{PropRefinedDeter}}
\label{section_6}

As already discussed in Section \ref{SecResult}, we first collect some peripheral results and then combine these in Section \ref{subsec:Thm1.3_proof}, which contains the proof of Theorem \ref{PropRefinedDeter}. 

\subsection{Enforcing the gauge on the flux corrector}
\label{SecUnique0}

Here, we assume that we are given an ungauged generalized corrector satisfying a quantified sublinearity estimate (\textit{e.g.} the output of Proposition \ref{PropExistCorr}) and use Lemma \ref{LemDelta} to obtain a generalized corrector with the same sublinearity properties. We also show that strictly sublinear generalized correctors are unique up to the addition of a random constant. Here is the result of this subsection:

\begin{lemma}
\label{enforce_gauge_1}
Let $\overline{a}_+$ and $\overline{a}_- \in \R^{d\times d}$ be fixed, and let $\langle \cdot \rangle$ be an ensemble on $\Omega$ such that, $\langl\cdot\rangl$-almost surely, $a$ admits $\overline{a}$ defined by \eqref{Defabar} as its homogenized matrix (this ensemble does not necessarily satisfy the assumptions of Section \ref{ensemble}).

Assume that $(\phi, \sigma^{\rm u} )$ is an ungauged generalized corrector such that for fixed $x_0 \in \mathbb{R}^d$ and $p\in [2,\infty)$, and for any $r\geq1$ we have that
\begin{align}
\frac{1}{r} \left\langle  \left( \fint_{B(x_0, r)} \left| \phi(x) - \fint_{B(x_0, 1)} \phi  \right|^2  \, \textrm{d} x\right)^{\frac{p}{2}} \right\rangle^{\frac{1}{p}} \leq \kappa \ln^{\beta}(2+r) r^{-\nu} \label{corrector_sublinear_1}\\
\textrm{and} \quad \frac{1}{r} \left\langle\left( \fint_{B(x_0, r)} \left| \sigma^{\rm u} (x) - \fint_{B(x_0, 1)} \sigma^{\rm u}   \right|^2 \, \textrm{d} x \right)^{\frac{p}{2}}\right\rangle^{\frac{1}{p}} \leq \kappa \ln^{\tilde{\beta}}(2+r) r^{-\nu} \label{flux_corrector_sublinear_1}
\end{align}
for $\nu \in (0,1]$, $\beta\geq0$, $\tilde{\beta}\geq0$, and $\kappa>0$. Under these conditions, $\phi$ is $\langle \cdot \rangle$-almost surely unique up to the addition of a constant in the class of strictly sublinear correctors. Furthermore, $\langl\cdot\rangl$-almost surely, there exists a strictly sublinear flux corrector $\sigma$ that is unique up to the addition of a constant and that satisfies
\begin{align}
\label{flux_corrector_sublinear_2}
\begin{split}
\frac{1}{r} \left\langle  \left( \fint_{B(x_0, r)} \left| \sigma(x) - \fint_{B(x_0, 1)} \sigma  \right|^2  \, \textrm{d} x\right)^{\frac{p}{2}}\right\rangle^{\frac{1}{p}} \\
\lesssim_{d, \nu, \tilde{\beta}} \begin{cases}
\kappa \ln^{\tilde{\beta}}(2+r) r^{-\nu} & \textrm{if} \quad \nu <1,\\
\kappa \ln^{\tilde{\beta}+1} (2+r)r^{-\nu} & \textrm{if} \quad \nu =1,
\end{cases}
\end{split}
\end{align}
for any $r\geq 1$.
\end{lemma}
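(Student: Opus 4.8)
The plan is to handle the two assertions by separate mechanisms: the uniqueness of $\phi$ via the large-scale regularity of Theorem \ref{ThAL} (i.e.\ a Liouville argument), and the existence and estimate for the gauged flux corrector $\sigma$ by solving a Poisson equation for its potential $N$ and feeding the input bound \eqref{flux_corrector_sublinear_1} into Lemma \ref{LemDelta}. A preliminary, routine step is to pass from the annealed hypotheses \eqref{corrector_sublinear_1}--\eqref{flux_corrector_sublinear_1} to the quenched statement needed for Theorem \ref{ThAL}: since $\nu>0$, the $p$-th moments of $\frac{1}{2^n}\big(\fint_{\Boule(x_0,2^n)}|(\phi,\sigma^{\rm u})-\fint_{\Boule(x_0,1)}(\phi,\sigma^{\rm u})|^2\big)^{1/2}$ are summable in $n$, so a Borel--Cantelli argument together with interpolation between consecutive dyadic scales shows that $\langle\cdot\rangle$-almost surely $(\phi,\sigma^{\rm u})$ satisfies the quenched sublinearity \eqref{CorrSub} for every $\delta>0$ above some realization-dependent radius $r^\star<\infty$. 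Granting this, if $\phi$ and $\phi'$ are both strictly sublinear correctors associated with $a$ then $\psi:=\phi-\phi'$ is $a$-harmonic on $\R^d$ (the affine parts cancel in \eqref{DefCorr}) and strictly sublinear, hence strictly subquadratic; by the Liouville theorem following Theorem \ref{ThAL}, $\psi=c+b\cdot(P+\phi)$, and by the non-degeneracy \eqref{NonDegen} combined with the Caccioppoli inequality $b\cdot(P+\phi)$ cannot be strictly sublinear unless $b=0$. Thus $\psi\equiv c$, proving the uniqueness of $\phi$.

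For $\sigma$, I would realize the gauged flux corrector through its potential as in \eqref{Def2_Pot}--\eqref{Def_N1}. By \eqref{Def_Pot}, $\partial_i\sigma^{\rm u}_{ijk}=\overline{a}_{jl}\partial_l P_k-a_{jl}(\partial_l P_k+\partial_l\phi_k)$, so for fixed $j,k$ the equation $\Delta N_{jk}=\partial_i\sigma^{\rm u}_{ijk}$ is in divergence form with right-hand side data $\sigma^{\rm u}_{\cdot jk}$, whose quantified sublinearity is precisely \eqref{flux_corrector_sublinear_1}. Lemma \ref{LemDelta} then produces $N_{jk}\in\HH^1_{\rm loc}(\R^d)$, unique up to an additive constant, with $\nabla N_{jk}-\fint_{\Boule(x_0,1)}\nabla N_{jk}$ strictly sublinear; in particular $N_{jk}$ is strictly subquadratic, so $\sigma_{ijk}:=\partial_i N_{jk}-\partial_j N_{ik}$ is an admissible object in the sense of Definition \ref{DefGenCorr}, satisfies \eqref{PotSym} by construction, and is strictly sublinear (being, up to a constant, a linear combination of components of $\nabla N-\fint_{\Boule(x_0,1)}\nabla N$). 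It remains to check \eqref{Def_Pot}: we have $\partial_i\sigma_{ijk}=\Delta N_{jk}-\partial_j(\partial_i N_{ik})$, and the extra term $\partial_i N_{ik}$ is harmonic because $\Delta(\partial_i N_{ik})=\partial_i\partial_l\sigma^{\rm u}_{lik}=0$ by the skew-symmetry of $\sigma^{\rm u}$ in its first two indices; since $\partial_i N_{ik}$ is also strictly sublinear, it is constant, whence $\partial_j(\partial_i N_{ik})=0$ and $\partial_i\sigma_{ijk}=\partial_i\sigma^{\rm u}_{ijk}$, i.e.\ \eqref{Def_Pot}.

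To obtain \eqref{flux_corrector_sublinear_2}, note that $\sigma_{ijk}-\fint_{\Boule(x_0,1)}\sigma_{ijk}$ is a fixed linear combination of components of $\nabla N-\fint_{\Boule(x_0,1)}\nabla N$, so it suffices to take the $p$-th $\langle\cdot\rangle$-moment of the conclusion \eqref{ee2} of Lemma \ref{LemDelta} applied to the data $\sigma^{\rm u}_{\cdot jk}$. Using the triangle inequality in $L^p(\langle\cdot\rangle)$ to move the moment inside the dyadic sum over scales that \eqref{ee2} produces, and then invoking \eqref{flux_corrector_sublinear_1} scale by scale, one is left with summing a series of the form $\sum_{2^n\lesssim r}\ln^{\tilde{\beta}}(2+2^n)2^{n(1-\nu)}$ (plus an exponentially decaying tail). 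For $\nu<1$ this is dominated by its last term and reproduces $\kappa\ln^{\tilde{\beta}}(2+r)r^{-\nu}$; for $\nu=1$ the geometric weight disappears and one is summing $O(\ln r)$ logarithms, which is exactly what yields the extra factor $\ln(2+r)$, hence $\kappa\ln^{\tilde{\beta}+1}(2+r)r^{-1}$. Finally, uniqueness of $\sigma$ up to a (skew-symmetric) constant follows since, if $\sigma=\partial N-\partial N$ and $\sigma'=\partial N'-\partial N'$ are two such flux correctors, $\nabla\phi$ being unique forces $N$ and $N'$ to solve the same Poisson equation, so $N-N'$ is harmonic and strictly subquadratic, hence affine, making $\sigma-\sigma'$ constant.

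The main obstacle I anticipate is the quantitative bookkeeping in the last paragraph: one must read off from Lemma \ref{LemDelta} the exact scale-by-scale dependence of $\nabla N$ on the data, and correctly identify that at the critical exponent $\nu=1$ the near-field contribution is a \emph{sum} of logarithms — so that the loss is a single extra power of $\ln(2+r)$ and not a spurious power of $r$. The only other non-mechanical point is ensuring the a.s.\ quenched sublinearity needed to invoke Theorem \ref{ThAL} in the uniqueness argument for $\phi$.
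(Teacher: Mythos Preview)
Your proposal is correct and follows essentially the same route as the paper: Borel--Cantelli to pass to quenched sublinearity, then Theorem~\ref{ThAL} for uniqueness of $\phi$ (the paper uses the large-scale Lipschitz estimate \eqref{LargeLip} plus Caccioppoli directly rather than going through the Liouville statement, but this is the same mechanism), and Lemma~\ref{LemDelta} applied with $f=\sigma^{\rm u}_{\cdot jk}-\fint_{\Boule(x_0,1)}\sigma^{\rm u}_{\cdot jk}$ to build $N_{jk}$ and hence the gauged $\sigma$. Two minor remarks: Lemma~\ref{LemDelta} is already stated in annealed form, so there is no need to ``take the $p$-th moment'' or redo the dyadic summation --- the estimate you want is precisely \eqref{SpecialCase}, which already contains the $\nu<1$ versus $\nu=1$ dichotomy; and $N_{jk}$ is unique up to affine (not constant) functions, which is what makes $\sigma$ unique up to a constant, as you correctly use at the end.
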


\begin{remark}
\label{4} An easy consequence of Proposition \ref{PropExistCorr} and Lemma \ref{enforce_gauge_1} is that for an ensemble $\langl\cdot\rangl$ on $\Omega$ satisfying the conditions of Section \ref{ensemble}, there $\langle \cdot \rangle$-almost surely exist generalized correctors $(\phi, \sigma)$ and $(\phi^{\star}, \sigma^{\star})$ respectively associated to $a$ and $a^{\star}$ that are strictly sublinear and, therefore, unique (up to the addition of random constants).
\end{remark}

\begin{proof}[Proof of Lemma \ref{enforce_gauge_1}]
We first focus on $\phi$.
By the Markov inequality and Borel-Cantelli lemma (using the same arguments as in Step 2 Lemma \ref{LemDelta}), $\phi$ is $\langle \cdot \rangle$-almost surely strictly sublinear.
Now, assume that, given a realization $a$, there are two strictly sublinear solutions to \eqref{DefCorr}.
By definition, their difference $u$ satisfies
\begin{align}
-\div\lt(a \nabla u\rt) = 0 \quad\text{in}\quad \R^d.
\end{align}
Yet, by Theorem \ref{ThAL}, there exists $r^\star>1$ which is $\langl\cdot\rangl$-surely finite, such that, for any $R\geq r\geq r^\star$, there holds
\begin{align*}
\fint_{\Boule(x_0,r)} \lt|\nabla u\rt|^2 \lesssim  \fint_{\Boule(x_0,R)} \lt|\nabla u\rt|^2.
\end{align*}
Therefore, using the Caccioppoli estimate and the strict sublinearity of $u$, we get
\begin{align*}
& \lt(\fint_{\Boule(x_0,r)} \lt|\nabla u\rt|^2  \rt)^{\frac{1}{2}}
\lesssim R^{-1} 
\lt(\fint_{\Boule(x_0,2R)} \lt| u-\fint_{\Boule(x_0,2R)} u  \rt|^2  \rt)^{\frac{1}{2}} \underset{R \uparrow \infty}{\longrightarrow} 0.
\end{align*}
Since $r \geq r^\star$ is arbitrary, we deduce that $\nabla u=0$.
Therefore, $u$ is constant in the space, $\langl\cdot\rangl$-almost surely.
As a consequence, $\phi$ is $\langle \cdot \rangle$-almost surely unique up to the addition of a random constant.

Next, we apply Lemma \ref{LemDelta} with $f = \sigma_{ijk}^{\rm u} - \fint_{B(x_0, 1)}  \sigma_{ijk}^{\rm u}$, which we may do since \eqref{ee0} is satisfied.  We obtain a solution $N_{jk}$ to \eqref{Def_N1} such that $\nabla N_{jk}$ satisfies \eqref{SpecialCase} and $\sigma$ as defined in \eqref{Def2_Pot}, \textit{i.e.} $\sigma_{ijk}:= \partial_i N_{jk} - \partial_j N_{ik}$, satisfies \eqref{flux_corrector_sublinear_2}. The $\langle \cdot \rangle$-almost sure uniqueness (up to the addition of constants) of $\sigma$ follows from the uniqueness of the $N_{jk}$ up to the addition of affine functions (shown in Lemma \ref{LemDelta}).
\end{proof}

\subsection{Control of the stochastic moments of $r^\star$}

We now bound the stochastic moments of the minimal radius $r^\star(x)$ for $x \in \mathbb{R}^d$.

\begin{lemma}
\label{moments}
Let the assumptions of Section \ref{ensemble} hold and $\Phi(a) :=(\phi, \phi^*, \sigma, \sigma^*)$ be composed of strictly sublinear generalized correctors associated to $a$ and $a^*$ respectively. Given $\delta_0>0$ and $x \in \R^d$, we define $r^\star(x)\geq 1$ as the minimal radius such that
\begin{align}
\label{DefRstar}
\sup_{r \geq r^\star(x)} \frac{1}{r} \lt( \fint_{\Boule(x,r)} \lt|\Phi - \fint_{\Boule(x,r)}\Phi  \rt|^2 \rt)^{\frac{1}{2}} \leq \delta_0.
\end{align}
Then, for any $p \in [1,\infty)$, the $p$-moment of $r^\star(x)$ is bounded as
\begin{align}\label{EspRstar}
\lt\langle \lt|r^\star(x)\rt|^p \rt\rangle^{\frac{1}{p}} \lesssim_{ d,\lambda, \delta_0, \nu, \nu_0, p}  c_{p/\nu_0}^{1/\nu_0}
\end{align}
for $\nu_0 < \nu$.
\end{lemma}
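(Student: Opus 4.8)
The plan is to dominate $r^\star(x)$, $\langle\cdot\rangle$-almost surely, by a fixed power of a single scalar random variable $\mathcal{C}(x)$ — the constant in a \emph{deterministic} sublinearity bound for the input correctors $\Phi_\pm$ extracted from the annealed hypothesis \eqref{CorrSubDef} — and then to control the stochastic moments of $\mathcal{C}(x)$ directly from \eqref{CorrSubDef}. Here $\Phi_\pm$ denotes, as in Section~\ref{ensemble}, the quadruple $\big((\phi_+,\phi_+^\star,\sigma_+,\sigma_+^\star),(\phi_-,\phi_-^\star,\sigma_-,\sigma_-^\star)\big)$, which comprises all the correctors entering the construction of the adapted $\Phi=(\phi,\phi^\star,\sigma,\sigma^\star)$.

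\emph{Step 1 (a random deterministic bound for $\Phi_\pm$).} Fix $\nu_0<\nu$ and, for $x\in\R^d$, set
$$
\mathcal{C}(x):=\max\!\left(1,\ \sup_{r\ge1}\ r^{\nu_0-1}\left(\fint_{\Boule(x,r)}\left|\Phi_\pm-\fint_{\Q(x,1)}\Phi_\pm\right|^2\right)^{\!\frac12}\right).
$$
I claim that $\mathcal{C}(x)$ is $\langle\cdot\rangle$-almost surely finite and that $\langle\mathcal{C}(x)^q\rangle^{1/q}\lesssim_{d,\lambda,\nu,\nu_0,q}c_q$ for every $q\in[2,\infty)$. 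Indeed, one bounds the supremum over $r\ge1$ by the sum over dyadic radii $r=2^n$ (the quotient at an intermediate scale being comparable to the one at the adjacent dyadic scale, up to a dimensional factor), uses Jensen's inequality to pull $\fint_{\Boule(x,2^n)}$ out of the $q/2$-th power, invokes \eqref{CorrSubDef} — after a routine covering argument relating $\fint_{\Boule(x,2^n)}|\Phi_\pm-\fint_{\Q(x,1)}\Phi_\pm|^2$ to the smoothed annealed difference there — to get $\langle(\fint_{\Boule(x,2^n)}|\cdots|^2)^{q/2}\rangle^{1/q}\lesssim c_q\,2^{n(1-\nu)}$, and sums $\sum_{n\ge0}2^{nq(\nu_0-\nu)}<\infty$, where $\nu_0<\nu$ is used; the finiteness of this $q$-th moment gives the almost sure finiteness of $\mathcal{C}(x)$. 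In particular, $\langle\cdot\rangle$-almost surely the suitably anchored $\Phi_\pm$ satisfy the input condition \eqref{Condition_delta} of Proposition~\ref{PropExistCorr} with $\nu$ replaced by $\nu_0$ and the constant $1$ replaced by $\mathcal{C}(x)$.

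\emph{Step 2 (deterministic domination of $r^\star(x)$, and conclusion).} I would then run the construction of Proposition~\ref{PropExistCorr} followed by Lemma~\ref{enforce_gauge_1} realisation-by-realisation, keeping the dependence on the constant in \eqref{Condition_delta} explicit: if that condition holds with exponent $\nu_0$ and constant $\mathcal{C}\ge1$, then the energy-optimal cut-off width in \eqref{DefLpm} becomes $L(r)\sim\mathcal{C}^{2/3}r^{1-2\nu_0/3}+2$ (the minimiser of the right-hand side of \eqref{Num:1010}), the threshold \eqref{defnr0} forces a minimal scale $r_0\lesssim_{d,\lambda,\nu_0}\mathcal{C}^{1/\nu_0}$, and the resulting unique strictly sublinear generalized corrector $\Phi$ satisfies
$$
\frac1r\left(\fint_{\Boule(x,r)}\left|\Phi-\fint_{\Boule(x,r)}\Phi\right|^2\right)^{\!\frac12}\ \lesssim_{d,\lambda,\nu_0}\ \mathcal{C}^{1/3}\,r^{-\nu_0/3}\qquad\text{for all }r\ge r_0.
$$
By the almost sure uniqueness of strictly sublinear generalized correctors from Lemma~\ref{enforce_gauge_1}, this $\Phi$ agrees up to additive constants with the one in the statement, so the quotient in \eqref{DefRstar} is the same, and it is $\le\delta_0$ for every $r\ge\max\!\big(r_0,(C\mathcal{C}^{1/3}/\delta_0)^{3/\nu_0}\big)$; hence $r^\star(x)\lesssim_{d,\lambda,\delta_0,\nu_0}\mathcal{C}(x)^{1/\nu_0}$ almost surely. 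Raising to the power $p$, taking $\langle\cdot\rangle$, and applying Step~1 with $q=p/\nu_0$ (using $q=\max(2,p/\nu_0)$ and absorbing the mismatch into the $p$-dependent constant when $p/\nu_0<2$) then yields
$$
\langle|r^\star(x)|^p\rangle^{1/p}\ \lesssim\ \langle\mathcal{C}(x)^{p/\nu_0}\rangle^{1/p}=\big(\langle\mathcal{C}(x)^{p/\nu_0}\rangle^{\nu_0/p}\big)^{1/\nu_0}\ \lesssim_{d,\lambda,\nu,\nu_0,p}\ c_{p/\nu_0}^{1/\nu_0},
$$
which is \eqref{EspRstar}.

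\emph{Main obstacle.} The delicate point is Step~2: Proposition~\ref{PropExistCorr} is stated with the input constant normalised to $1$, so one must revisit its proof — the choice of $L(r)$ in \eqref{DefLpm}, the threshold \eqref{defnr0}, and the summations in Lemma~\ref{LemRecurrent} — and verify that both the output constant and the minimal scale $r_0$ depend on $\mathcal{C}$ only through fixed powers. The exponent arithmetic is then self-balancing: the $\mathcal{C}^{1/3}$ in the corrector estimate, pushed through the threshold $(\cdot/\delta_0)^{3/\nu_0}$ that governs $r^\star$, turns into exactly $\mathcal{C}^{1/\nu_0}$, which is matched by the $(p/\nu_0)$-th moment bound for $\mathcal{C}$ from Step~1 — this is precisely what produces the constant $c_{p/\nu_0}^{1/\nu_0}$ and explains the appearance of both the index $p/\nu_0$ and the power $1/\nu_0$ in \eqref{EspRstar}.
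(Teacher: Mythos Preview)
Your approach is correct and is, at its core, the same strategy as the paper's: control $r^\star(x)$ by a single random scalar built from the dyadic supremum of the input-corrector sublinearity quotient, and control the moments of that scalar via \eqref{CorrSubDef}. The only substantive difference is in packaging. The paper does not reopen Proposition~\ref{PropExistCorr} to track the input constant; instead it \emph{rescales}: setting $\epsilon^{-1}\simeq\mathcal{C}(x)^{1/\nu_1}$ (for an auxiliary $\nu_0<\nu_1<\nu$), the rescaled field $a(\cdot/\epsilon)$ satisfies \eqref{Condition_delta} with constant~$1$, so Proposition~\ref{PropExistCorr} applies verbatim, and undoing the rescaling yields exactly your output bound $\mathcal{C}^{1/3}r^{-\nu_0/3}$ above the threshold $r_0\sim\mathcal{C}^{1/\nu_0}$. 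Your explicit tracking of $\mathcal{C}$ through $L(r)$, $r_0$, and the summations in Lemma~\ref{LemRecurrent} is the hands-on equivalent of this rescaling, and your exponent bookkeeping ($L\sim\mathcal{C}^{2/3}r^{1-2\nu_0/3}$, $r_0\sim\mathcal{C}^{1/\nu_0}$, output prefactor $\mathcal{C}^{1/3}$) matches what the rescaling produces.

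Your organisation is in one respect slightly leaner: by first establishing the \emph{deterministic} inequality $r^\star(x)\lesssim\mathcal{C}(x)^{1/\nu_0}$ and only then taking moments, you bypass the paper's intermediate step of proving an annealed sublinearity bound on $\Phi$ itself (its estimate \eqref{EstimLpPhi}) and the second dyadic summation in its Step~3; correspondingly you need only one exponent gap $\nu_0<\nu$, whereas the paper threads two, $\nu_0<\nu_1<\nu$. The paper's rescaling, on the other hand, buys modularity: Proposition~\ref{PropExistCorr} is used as a black box, with no need to revisit its proof --- precisely the ``main obstacle'' you flag.
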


\begin{proof} Notice first that the generalized corrector $\Phi=(\phi,\phi^*,\sigma,\sigma^*)$ and thus the minimal radius $r^{\star}(x)$ are $\langle \cdot \rangle$-almost surely well-defined thanks to Remark \ref{4}. For brevity, we use the convention
\begin{align}
\label{sublinear_glued}
\check{\delta}(x_0,r) := \frac{1}{r} \lt(\int_{\Boule(x_0,r)} \lt|\Phi_\pm\rt|^2 \rt)^{\frac{1}{2}}
\end{align}
for $x_0 \in \R^d$ and $r>0$.
Without loss of generality, we assume that $p$ is large (the general case being deduced from the H\"older inequality).

\paragraph{Step 1: Post-processing \eqref{CorrSubDef}} 
If $p \geq 2$, by the Cauchy-Schwarz inequality, the Bochner inequality and \eqref{CorrSubDef}, we have, for any $x, y \in \R^d$,
\begin{equation}
\label{Num:1011}
\begin{aligned}
&\langl \lt(\int_{\Q(0,1)} \lt| \Phi_\pm(y+z) - \fint_{\Q(x,1)} \Phi_\pm\rt|^2 \dd z\rt)^{\frac{p}{2}} \rangl^{\frac{1}{p}}
\\
&\leq \langl \lt(\int_{\Q(0,1)} \int_{\Q(0,1)} \lt| \Phi_\pm(y+z) - \Phi_\pm(x+z')\rt|^2 \dd z' \dd z\rt)^{\frac{p}{2}} \rangl^{\frac{1}{p}}
\\
&\leq \langl \lt( \int_{\Q(0,1)} \int_{\Q(0,2)} \lt| \Phi_\pm(y+z) - \Phi_\pm(x+z+z'')\rt|^2\dd z'' \dd z \rt)^{\frac{p}{2}} \rangl^{\frac{1}{p}}
\\
&\leq \lt(\int_{\Q(0,2)} \langl \lt(\fint_{\Q(0,1)} \lt| \Phi_\pm(y+z) - \Phi_\pm(x+z+z'')\rt|^2\dd z\rt)^{\frac{p}{2}} \rangl^{\frac{2}{p}} \dd z''\rt)^{\frac{1}{2}}
\\
&\lesssim c_p \lt( 1+|y-x|\rt)^{1-\nu}.
\end{aligned}
\end{equation}
In the sequel, we anchor $\Phi_\pm$ at $x$, in the sense of $\int_{\Q(x,1)} \Phi_\pm=0$.
Whence, by the the above inequality combined with the Bochner estimate, we deduce that
\begin{equation}\label{A_montrer}
\langl \check{\delta}(x,r)^p \rangl^{\frac{1}{p}} \lesssim_{d} c_p r^{-\nu}.
\end{equation}

\paragraph{Step 2: Estimate on the stochastic moments of $\Phi$}
We set $0<\nu_0<\nu_1<\nu$, $q \geq 3$ and
\begin{align}\label{defepsilon}
\epsilon^{-1}:= 1+ \sup_{r \geq 1} \lt(r^{\nu_1} \check{\delta}(x,r) \rt)^{1/\nu_1},
\end{align}
which, by definition, ensures that there holds:
\begin{align}
\label{star1}
\check{\delta}(x,r) \leq (\epsilon r)^{-\nu_1}
\end{align}
for any $r \geq 1$.
We then notice that, by a simple scaling argument, the generalized corrector associated to $a_{\pm}^{\epsilon}:= a_{\pm}\lt(\frac{\cdot}{\epsilon}\rt)$ is given by $\Phi^\epsilon_\pm:=\epsilon \Phi_\pm\lt(\frac{\cdot}{\epsilon}\rt)$. This means that 
\begin{align*}
\frac{1}{r} \lt( \fint_{\Boule(x,r)} \lt| \Phi^\epsilon_\pm \rt|^2 \rt)^{\frac{1}{2}}
=&\frac{\epsilon}{r} \lt( \fint_{\Boule\lt(x,\epsilon^{-1}r\rt)} \lt| \Phi_\pm\rt|^2 \rt)^{\frac{1}{2}}
\overset{\eqref{star1}}{\leq} r^{-\nu_1},
\end{align*}
which allows us to apply Proposition \ref{PropExistCorr} to obtain an \textit{ungauged} generalized corrector $\Phi^{\epsilon,{\rm u}}=\lt(\phi,\sigma^{\rm u}\rt)$ associated with the coefficient field\footnote{By definition, the interface layer of $a_\epsilon$ has a width $2\epsilon<2$.} $a^{\epsilon}:= a\lt(\frac{\cdot}{\epsilon}\rt)$. Rescaling in the opposite direction as before, we then notice that $\Phi^{\rm u}:=\epsilon^{-1} \Phi^{\epsilon, \rm u } \lt(\epsilon \, \cdot\rt)$ is a generalized ungauged corrector associated to the original coefficient field $a$.
By \eqref{SublinSSop_2} applied to $\Phi^{\epsilon,{\rm u}}$ we obtain:
\begin{align}\label{star2}
\frac{1}{r} \lt( \fint_{\Boule(x,r)} \lt| \Phi^{\rm u} - \fint_{\Boule(x,1)} \Phi^{\rm u} \rt|^2 \rt)^{\frac{1}{2}}
\leq \kappa \epsilon^{-\nu_1/3} r^{-\nu_1/3},
\end{align}
for any $r \geq \epsilon^{-1}$. 

As a consequence, there holds
\begin{align}\label{star2_1}
\langl\lt( \fint_{\Boule(x,r)} \lt| \Phi^{\rm u} - \fint_{\Boule(x,1)} \Phi^{\rm u} \rt|^2 \rt)^{\frac{q}{2}}\rangl^{\frac{1}{q}}
\leq \kappa \langl \epsilon^{-q\nu_1/3} \rangl^{\frac{1}{q}} r^{1-\nu_1/3}.
\end{align}
Furthermore, by definition \eqref{defepsilon} of $\epsilon$, by a dyadic argument, by the Bochner estimate and by \eqref{A_montrer}, we obtain
\begin{align}\label{star2_2}
\begin{aligned}
\langl \epsilon^{-q\nu_1/3} \rangl^{\frac{1}{q}} &\lesssim 1 +  \langl \lt( \sup_{r \geq 1} r^{\nu_1} \check{\delta}(x,r) \rt)^{\frac{q}{3}}\rangl^{\frac{1}{q}} 
\\
&\lesssim 1 + \langl \lt(\sum_{j=0}^{\infty} 2^{j \nu_1} \check{\delta}(x,2^j) \rt)^{\frac{q}{3}}\rangl^{\frac{1}{q}}
\\
&\lesssim 1 + \lt(\sum_{j=0}^{\infty} 2^{j \nu_1} 2^{-j \nu} c_{q/3}\rt)^{\frac{1}{3}}
\lesssim c_{q/3}^{\frac{1}{3}}.
\end{aligned}
\end{align}
Then, combining \eqref{star2_1}, \eqref{star2_2} and Lemma \ref{enforce_gauge_1} implies that the generalized corrector $\Phi=(\phi,\phi^*,\sigma,\sigma^*)$ satisfies:
\begin{equation}
\langl\delta(x,r)^{q}\rangl^{\frac{1}{q}}
\lesssim c_{q/3}^{\frac{1}{3}} r^{-\nu_1/3},
\label{EstimLpPhi}
\end{equation}
where we denote
\begin{equation*}
\delta(x,r):=\lt( \fint_{\Boule(x,r)} \lt| \Phi - \fint_{\Boule(x,r)} \Phi \rt|^2 \rt)^{\frac{1}{2}}.
\end{equation*}

\paragraph{Step 3: Conclusion}
By a similar argument as in Step 2, we know that
\begin{align*}
r^\star(x) \leq 1 + \sup_{r \geq 1} \lt( r^{\frac{\nu_0}{3}} \frac{\delta(x,r)}{\delta_0} \rt)^{\frac{3}{\nu_0}}.
\end{align*}
As a consequence of \eqref{EstimLpPhi}, using a dyadic decomposition and the Bochner estimate as in \eqref{star2_2}, there holds:
\begin{align*}
\langl |r^\star(x)|^p \rangl^{\frac{1}{p}}
&\lesssim 1 + \delta_0^{-\frac{3}{\nu_0}} \langl\sup_{r \geq 1} \lt(  r^{\frac{\nu_0}{3}} \delta(x,r) \rt)^{\frac{3p}{\nu_0}} \rangl^{\frac{1}{p}}
\\
&\lesssim 1 + \delta_0^{-\frac{3}{\nu_0}} \lt(\sum_{j=0}^{\infty} 2^{\frac{j \nu_0}{3}} \langl\lt( \delta(x,2^j) \rt)^{\frac{3p}{\nu_0}} \rangl^{\frac{\nu_0}{3p}} \rt)^{\frac{3}{\nu_0}}
\\
&\lesssim 1 + \delta_0^{-\frac{3}{\nu_0}} c_{p/\nu_0}^{1/\nu_0} \lt(\sum_{j=0}^{\infty} 2^{\frac{j \nu_0}{3}}  2^{-\frac{j \nu_1}{3}} \rt)^{\frac{3}{\nu_0}}
\lesssim c_{p/\nu_0}^{1/\nu_0}.
\end{align*}
This concludes the proof of Lemma \ref{moments}.
\end{proof}

\subsection{Estimate for the Green's function}

We now use the uniform control of the $p^{\rm th}$ stochastic moments of $r^{\star}(x)$ for $x \in \mathbb{R}^d$ from the previous subsection in order to bound the $p^{\rm th}$ stochastic moments of local  $L^2$-averages of the second mixed derivatives of the heterogeneous Green's function.  In particular, we find:

\begin{lemma}
\label{LemGreenDet}
Assume that $d \geq 2$. Let the assumptions of Section \ref{ensemble} hold. Then, the mixed  second derivatives of the Green's function $G$ associated with the operator $-\div\lt(a\cdot\nabla\rt)$ in $\R^d$ satisfy:
\begin{align}
\label{annealedE}
\begin{split}
& \langl\lt(\fint_{\Boule(x,1)} \fint_{\Boule(y,1)} \lt|\nabla_x  \nabla_y G(x',y') \rt|^2 \dd y' \dd x'\rt)^{\frac{p}{2}} \rangl^{\frac{1}{p}}\\ &\quad \quad \quad \lesssim_{d,\lambda, \nu, \nu_0, p} c^{d/\nu_0}_{dp/\nu_0}  |x-y|^{-d} \quad \quad \quad   \textrm{for} \quad |x-y| \geq 3,
\end{split}
\end{align}
\end{lemma}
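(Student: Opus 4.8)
The plan is to derive \eqref{annealedE} from three ingredients: the large-scale Lipschitz estimate of Theorem~\ref{ThAL}, which by Proposition~\ref{PropExistCorr}, Lemma~\ref{enforce_gauge_1} and Remark~\ref{4} is available $\langle\cdot\rangle$-almost surely \emph{both} for $a$ and for its transpose $a^\star$ (the strictly sublinear generalized correctors $(\phi,\sigma)$ and $(\phi^\star,\sigma^\star)$ exist, and $a^\star$ again has the interface form \eqref{Defa}); the deterministic Green's-function estimates that follow from large-scale regularity alone in \cite{BellaGiunti_2018} (from which I would also import the very existence of $G$ and the a priori meaning of $\nabla_x\nabla_y G$ away from the diagonal, so as not to invoke scalar-specific regularity); and the moment bound on the minimal radius $r^\star$ of Lemma~\ref{moments}, which is exactly what produces the dependence on $c_{dp/\nu_0}$. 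Throughout I fix $\delta_0:=\delta(d,\lambda,1/2)$ in the definition of $r^\star(\cdot)$; since the correctors are globally strictly sublinear, this guarantees that \eqref{LargeLip} holds, with balls centered at $z$, for every $r^\star(z)\le r\le R<\infty$, simultaneously for $a$ and for $a^\star$ (the latter because $\Phi$ in Lemma~\ref{moments} contains $\phi^\star,\sigma^\star$).

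The core is a deterministic estimate: for $|x-y|\ge 3$,
\begin{align}
\label{plan_det}
\fint_{\Boule(x,1)}\fint_{\Boule(y,1)}\lt|\nabla_x\nabla_y G(x',y')\rt|^2\,\dd y'\,\dd x'\;\lesssim_{d,\lambda}\;\lt(r^\star(x)\,r^\star(y)\rt)^{d}\,|x-y|^{-2d}.
\end{align}
To prove it I would use that, away from the diagonal, $\partial_{y_k}G(\cdot,y')$ is $a$-harmonic and $\partial_{x_j}G(x',\cdot)$ is $a^\star$-harmonic (differentiate the defining equation in a parameter). Setting $T:=|x-y|/6$, I apply \eqref{LargeLip} on the $x$-side to $\partial_{y'_k}G(\cdot,y')$ for fixed $y'\in\Boule(y,T)$, then on the $y$-side to $\partial_{x'_j}G(x',\cdot)$ for fixed $x'\in\Boule(x,T)$, using at each step the elementary volume comparison $\fint_{\Boule(z,1)}(\cdot)\le (r^\star(z))^d\fint_{\Boule(z,r^\star(z))}(\cdot)$ to descend from scale $r^\star(z)$ to the unit scale. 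On the event $\{r^\star(x)\vee r^\star(y)\le T\}$ this gives
\begin{align*}
\fint_{\Boule(x,1)}\fint_{\Boule(y,1)}\lt|\nabla_x\nabla_y G\rt|^2\;\lesssim\;\lt(r^\star(x)\,r^\star(y)\rt)^d\fint_{\Boule(x,T)}\fint_{\Boule(y,T)}\lt|\nabla_x\nabla_y G\rt|^2,
\end{align*}
and the remaining large-scale average is $\lesssim T^{-2d}\simeq|x-y|^{-2d}$: two Caccioppoli inequalities reduce it to $T^{-4}$ times an $L^2$-average over $\Boule(x,2T)\times\Boule(y,2T)$ of the mixed second difference of $G$, which scales like $T^{4-2d}$ by the Green's-function bounds of \cite{BellaGiunti_2018}. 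On the complementary event $\{r^\star(x)\vee r^\star(y)>T\}$ the factor $(r^\star(x)\,r^\star(y))^d$ already dominates $T^{2d}$, so \eqref{plan_det} still follows, now from a cruder energy bound.

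Granting \eqref{plan_det}, the conclusion follows by taking the $\tfrac p2$-th power, then $\langle\cdot\rangle$, then Cauchy--Schwarz to decouple $r^\star(x)$ and $r^\star(y)$:
\begin{align*}
\langl\lt(\fint_{\Boule(x,1)}\fint_{\Boule(y,1)}\lt|\nabla_x\nabla_y G\rt|^2\rt)^{\frac p2}\rangl^{\frac1p}\;\lesssim\;\langl (r^\star(x))^{dp}\rangl^{\frac1{2p}}\,\langl (r^\star(y))^{dp}\rangl^{\frac1{2p}}\,|x-y|^{-d},
\end{align*}
and then Lemma~\ref{moments} with exponent $dp$ gives $\langl (r^\star(z))^{dp}\rangl^{1/(dp)}\lesssim c_{dp/\nu_0}^{1/\nu_0}$, hence $\langl (r^\star(z))^{dp}\rangl^{1/(2p)}\lesssim c_{dp/\nu_0}^{d/(2\nu_0)}$, and the product of the two factors is $c_{dp/\nu_0}^{d/\nu_0}$, which is \eqref{annealedE}. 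The hard part will be the deterministic bound \eqref{plan_det}---pinning down the correct large-scale scaling of $\nabla_x\nabla_y G$ without recourse to De Giorgi--Nash--Moser, and transferring the translation-invariant argument of \cite{BellaGiunti_2018} to the interface geometry, where it is essential that Theorem~\ref{ThAL} supplies large-scale regularity centered at \emph{arbitrary} points $x_0\neq0$, for both $a$ and $a^\star$.
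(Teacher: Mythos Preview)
Your proposal is correct and follows essentially the same route as the paper: obtain the deterministic pathwise estimate $\bigl(\fint\fint|\nabla_x\nabla_y G|^2\bigr)^{1/2}\lesssim (r^\star(x)r^\star(y))^{d/2}|x-y|^{-d}$, then take moments, decouple by Cauchy--Schwarz, and apply Lemma~\ref{moments} with exponent $dp$ to recover the constant $c_{dp/\nu_0}^{d/\nu_0}$. The paper simply invokes \cite[Th.~1 and Cor.~1]{BellaGiunti_2018} for the deterministic estimate rather than re-deriving it, whereas you sketch the two-sided large-scale Lipschitz argument; your observation that Theorem~\ref{ThAL} applies at arbitrary centers $x_0$ and for both $a$ and $a^\star$ is exactly what makes the transfer of \cite{BellaGiunti_2018} to the interface setting go through. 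One small caution: on the event $\{r^\star(x)\vee r^\star(y)>T\}$ the product $(r^\star(x)r^\star(y))^d$ need not dominate $T^{2d}$ when only one radius is large, so the ``cruder energy bound'' there requires a bit more care (treat the $x$- and $y$-sides separately as in \cite{BellaGiunti_2018}); but this is precisely the part you flag as the hard step and defer to the reference, in line with the paper.
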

\noindent for any $\nu_0 < \nu$.
\begin{remark}
If the coefficient field $a$ is uniformly Hölder continuous in $\R^d$, estimate \eqref{annealedE} can be upgraded to a pointwise estimate for any $x, y \notin \mathcal{I}$ by invoking \cite[Th. 1.1]{LiNirenberg_2003}.
\end{remark}

\begin{proof}[Proof of Lemma \ref{LemGreenDet}]
Invoking the result \cite[Th.\ 1]{BellaGiunti_2018} (and \cite[Cor.\ 1]{BellaGiunti_2018} for $d=2$) we obtain: For a well-chosen constant $\delta>0$, depending on $d$ and $\lambda$, and $r^\star(x) \geq 1$, the minimal radius associated with the condition
\begin{align*}
\sup_{r>r^\star(x)} \frac{1}{r} \lt(\fint_{\Boule(x,r)} \lt|\Phi -  \fint_{\Boule(x,r)} \Phi \rt|^2\rt)^{\frac{1}{2}} \leq \delta,
\end{align*}
the mixed derivatives of the Green's function $G$ satisfy
\begin{align}
\label{Green3}
\lt(\fint_{\Boule(x,1)} \fint_{\Boule(y,1)} \lt|\nabla_x  \nabla_y G(x',y') \rt|^2 \dd y' \dd x'\rt)^{\frac{1}{2}} &\lesssim_{d,\lambda} \lt(\frac{\lt(r^\star(x)r^\star(y)\rt)^{\frac{1}{2}}}{|x-y|}\rt)^{d}.
\end{align}
Hence, \eqref{annealedE} is a direct consequence of \eqref{Green3} and  Lemma \ref{moments}, which yields:
\begin{align*}
\langl  \lt(r^\star(x)r^\star(y)\rt)^{dp/2}  \rangl^{\frac{1}{p}} \lesssim  \left( \langle r^\star (x)^{dp} \rangle^{1/dp} \langle r^\star (y)^{dp} \rangle^{1/dp}   \right)^{d/2} \lesssim_{d,\lambda, \nu, \nu_0, p, \delta}  c^{d/\nu_0}_{dp/\nu_0}.
\end{align*}
\end{proof}

\subsection{Proof of Theorem \ref{PropRefinedDeter}}
\label{subsec:Thm1.3_proof}
Equipped with Lemmas \ref{enforce_gauge_1}, \ref{moments} and \ref{LemGreenDet}, we are in a position to proceed with the:

\begin{proof}[Proof of Theorem \ref{PropRefinedDeter}] Throughout our argument we fix $p \in [2,\infty)$ and use the notation $``\lesssim"$ to denote $``\lesssim_{d, \lambda, \nu, \nu_0, p}"$.

\paragraph{Strategy of proof} 
By Remark \ref{4}, we already have the $\langl\cdot\rangl$-almost sure existence and the uniqueness up to a random constant of a strictly sublinear generalized corrector $\lt(\phi,\sigma\rt)$. Therefore, the following proof is devoted to establishing the improved sublinearity estimates \eqref{CorrSubNu} and \eqref{CorrSubNuSig}.
We use a similar construction as that in Section \ref{sec:Prop_1}, but replacing the use of energy estimates with that of the Green's function estimates provided by Lemma \ref{LemGreenDet}.

More precisely, we set a smooth function  $\chi(x)$ only depending on $x^\perp$ such that
\begin{align*}
[-2,2] \times \R^{d-1} \subseteq \lt\{x : \chi(x)=1\rt\} \subseteq \Supp(\chi) \subseteq [-3,3] \times \R^{d-1}.
\end{align*}
Then, imposing the anchoring relation $\fint_{\Q(x_0,1)} \Phi_{\pm} = 0$ for $x_0 \in \mathbb{R}^d$, we propose a decomposition of $\phi$ in the spirit of Proposition \ref{PropExistCorr}:
\begin{align}\label{Defw2}
\phi_k=(1-\chi) \check{\phi}_j[x_0] \partial_j P_k + \tilde{\phi}_k[x_0],
\end{align}
where the dependence of $\check{\phi}[x_0]$, $\tilde{\phi}[x_0]$, and $\check{\sigma}[x_0]$ in the anchoring is made explicit.
Moreover, we define an ungauged corrector $\sigma^{\rm u} [x_0]$ by:
\begin{equation}
\label{Defsigmau}
\lt\{
\begin{aligned}
\sigma^{\rm u} _{ijk}[x_0]&=(1-\chi) \check{\sigma}_{ijl}[x_0] \partial_l P_k +\partial_i \widetilde{N}_{jk}[x_0]-\partial_j\widetilde{N}_{ik}[x_0],
\\
\Delta \widetilde{N}_{jk}[x_0]&=-g_{jk}[x_0]-a_{jl}\partial_l\tilde{\phi}_k[x_0],
\\
g_{ik}[x_0]&=\lt(\chi \lt(a_{ij}-\overline{a}_{ij}\rt)
-\lt(a_{il} \check{\phi}_j[x_0]-\check{\sigma}_{ilj}[x_0]\rt)\partial_l \chi\rt)\partial_j P_k.
\end{aligned}
\rt.
\end{equation}
Then, we prove estimates on $\tilde{\phi}[x_0]$ and $\sigma^{\rm u}[x_0]$ that may be transfered to $\phi$ and $\sigma$, either directly using \eqref{Defw2} or indirectly through Lemma \ref{enforce_gauge_1} (note that the uniqueness of $\lt(\phi,\sigma\rt)$ plays a fundamental role).

Our argument has five steps: In Step 1, we use Lemma \ref{LemGreenDet} to show that
\begin{align}\label{Claim0}
\langl \lt(\int_{\Boule(x_0,1)} \lt|\nabla \tilde{\phi}[x_0] \rt|^2  \rt)^{\frac{p}{2}} \rangl^{\frac{1}{p}} \lesssim  c^{1+d/\nu_0}_{2dp/\nu_0} \lt(1+\lt|x_0^\perp\rt|\rt)^{-\nu}
\end{align}
for any $p < \infty$.
In Step 2,  using the uniqueness $\nabla \phi$ and changing the anchoring point $x_0$, we extend the above estimate as follows:
\begin{align}
\label{Satisw}
\begin{split}
&\langl \lt(\int_{\Boule(x,1)} \lt|\nabla \tilde{\phi}[x_0] \rt|^2 \rt)^{\frac{p}{2}} \rangl^{\frac{1}{p}}
\\
&\qquad\lesssim
c^{1+d/\nu_0}_{2dp/\nu_0} \lt\{
\begin{aligned}
&\lt(1+\lt|x^\perp\rt|\rt)^{-\nu}  && \quad\text{if}\quad \lt|x^\perp\rt| \geq 4,
\\
&\lt(1+|x-x_0|\rt)^{1-\nu}  && \quad\text{if}\quad \lt|x^\perp\rt| \leq 4
\end{aligned}
\rt. 
\end{split}
\end{align}
for any $x \in \R^d$.  From this, we deduce in Step 3 that the corrector $\phi$ satisfies \eqref{CorrSubNu}.
In Step 4, defining $x_0'=(0,x_0^\parallel)$, we show that the ungauged flux corrector $\sigma^{\rm u} [x_0']$ satisfies
\begin{align}\label{PrevStep}
\begin{aligned}
&\langl \lt( \fint_{\Boule(x_0,r)} \lt| \sigma^{\rm u} \lt[x_0'\rt] - \fint_{B(x_0,1)} \sigma^{\rm u} \lt[x_0'\rt]  \rt|^2\rt)^{\frac{p}{2}} \rangl^{\frac{1}{p}}
\\
&\qquad \lesssim c^{1+d/\nu_0}_{2dp/\nu_0}
r^{1-\nu}\ln^{1+ \lfloor \nu \rfloor}(r)
\end{aligned}
\end{align}
for any $r \geq 2$. In Step 5, by appealing to Lemma \ref{enforce_gauge_1}, we obtain that the (gauged) flux corrector $\sigma$ satisfies 
\begin{align}\label{PrevStep_new}
\langl \lt( \fint_{\Boule(x,r)} \lt| \sigma - \fint_{B(x,1)} \sigma \rt|^2\rt)^{\frac{p}{2}} \rangl^{\frac{1}{p}}
\lesssim c^{1+d/\nu_0}_{2dp/\nu_0}
r^{1-\nu}\ln^{1+ 2\lfloor \nu \rfloor}(r)
\end{align}
for any $x \in \mathbb{R}^d$ and $r \geq 2$. To finish, we convert this into \eqref{CorrSubNuSig}.

\paragraph{Step 1:} Set $x_0 \in \R^d$. We reinterpret \eqref{Defwm} and \eqref{Defg} in the easier context of \eqref{Defw2} to write:
\begin{align}\label{Defwm2}
-\div\lt( a \nabla \tilde{\phi}_k[x_0] \rt)
=\partial_i\lt(g_{ik}[x_0]\rt) \quad\text{in}\quad \R^d
\end{align}
with $g$ defined by \eqref{Defsigmau}.
By \eqref{Num:1011}, we have:
\begin{align}
\langl \left( \int_{\Q(x,1)}  \lt| \lt(\check{\phi}[x_0],\check{\sigma}[x_0]\rt) \rt|^2  \right)^{\frac{p}{2}} \rangl^{\frac{1}{p}}\lesssim c_p (1 + |x-x_0|)^{1- \nu}
\end{align}
for any $p<\infty$ and $x \in \R^d$.
The definition \eqref{Defsigmau} then yields that
\begin{align}
\label{Borneg}
\langl \lt(\int_{\Q(x,1)} \lt|g[x_0]\rt|^2\rt)^{\frac{p}{2}}\rangl^{\frac{1}{p}} 
\lesssim 
\lt\{
\begin{aligned}
&c_p \lt(1+ \lt|x-x_0\rt|\rt)^{1-\nu} && \quad\text{if}\quad x^\perp \in [-4,4],
\\
&0 && \quad\text{if}\quad x^\perp \notin [-4,4].
\end{aligned}
\rt.
\end{align}

To obtain \eqref{Claim0}, we then decompose $\tilde{\phi}[x_0]=\tilde{\phi}^1[x_0] +\tilde{\phi}^2[x_0]$, where for each $k \in  [\![ 1, d]\!]$ the function $\tilde{\phi}^1_k[x_0] \in \HH^1_{\rm{loc}}(\R^d)$ is the Lax-Milgram solution of
\begin{align}
\label{LM_new_1}
-\div\lt( a \nabla \tilde{\phi}^1_k[x_0] \rt)=\partial_i\lt(\mathds{1}_{\Q(x_0,3)} g_{ik}[x_0]\rt) \quad \quad\text{in}\quad \quad \R^d
\end{align}
and $\tilde{\phi}^2_k[x_0] \in \HH^1_{\rm{loc}}(\R^d)$ is a strictly sublinear solution to
\begin{align}
\label{LM_new_2}
-\div\lt( a \nabla \tilde{\phi}^2_k[x_0] \rt)=\partial_i\lt(\lt(1-\mathds{1}_{\Q(x_0,3)}(y)\rt) g_{ik}[x_0]\rt)   \quad \quad\text{in}\quad \quad \R^d.
\end{align}
Combining the energy estimate with \eqref{Borneg}, we get
\begin{align}
\langl \left( \int_{\R^d} \lt|\nabla \tilde{\phi}^1[x_0]\rt|^2\right)^{\frac{p}{2}} \rangl^{\frac{1}{p}} 
\lesssim  c_p \langl \left( \int_{\Q(x_0,3)} \lt|g[x_0]\rt|^2 \right)^{\frac{p}{2}} \rangl^{\frac{1}{p}} 
\lesssim c_p.
\label{Estim9}
\end{align}

To obtain estimates for $\nabla \tilde{\phi}^2[x_0]$, we differentiate the Green's function representation and decompose it on cubes. In particular, for $x \in \Q(x_0,1)$, we write:
\begin{align*}
\nabla \tilde{\phi}^2[x_0](x)
=&\int_{\R^d} \nabla_x \nabla_y G(x,y) \cdot g [x_0](y)  \lt(1-\mathds{1}_{\Q(x_0,3)}(y)\rt) \dd y.
\end{align*}
Combining this representation with the triangle inequality and H\"{o}lder's inequality, we obtain:
\begin{align*}
&\langl \lt(\int_{\Q(x_0,1)} \lt| \nabla \tilde{\phi}^2[x_0]\rt|^2 \rt)^{\frac{p}{2}} \rangl^{\frac{1}{p}}
\\
&\lesssim \sum_{k \in \Z^d \backslash \Q(0,3)} \langl \lt( \int_{\Q(x_0,1)} \lt| \int_{\Q(k+x_0,1)} \nabla_x \nabla_y G(x,y) \cdot g[x_0](y) \dd y\rt|^2 \dd x \rt)^{\frac{p}{2}} \rangl^{\frac{1}{p}}
\\
&\lesssim \sum_{k \in \Z^d \backslash\Q(0,3)}  \langl \left( \int_{\Q(k+x_0,1)} \lt|g[x_0]\rt|^2 \right)^{p}  \rangl^{1/2p}  \\
& \hspace{2.5cm} \times \langl \left(\int_{\Q(x_0,1)} \int_{\Q(k+x_0,1)} \lt| \nabla_x \nabla_y G(x,y)\rt|^2 \dd y \, \dd x \right)^{p} \rangl^{1/ 2p}.
\end{align*}

We treat the second term on the right-hand side of the above estimate with Lemma \ref{LemGreenDet} to the effect of
\begin{align*}
\langl \left(\int_{\Q(x_0,1)} \int_{\Q(k+x_0,1)} \lt| \nabla_x \nabla_y G(x,y)\rt|^2 \dd y \, \dd x \right)^{p} \rangl^{1/2p} \lesssim c^{d/\nu_0}_{2 d p / \nu_0}  |k|^{-d}.
\end{align*}
Since $g[x_0]$ is supported inside $[-3,3] \times \R^{d-1}$ and satisfies \eqref{Borneg}, we then have:
\begin{align}
\begin{aligned}
& \langl \lt(\int_{\Q(x_0,1)} \lt| \nabla \tilde{\phi}^2[x_0](x)\rt|^2 \dd x\rt)^{\frac{p}{2}} \rangl^{\frac{1}{p}}
\\
& \lesssim  
c_{2p} c^{d/\nu_0}_{2dp/\nu_0}
\sum_{ \textrm{\scriptsize $\begin{array}{ll}
k \in \Z^d \backslash \Q(0,3),\\
x^\perp_0 + k^\perp \in [-4,4]
\end{array}$}} |k|^{-d} |k|^{1-\nu}
\\
&  \lesssim c^{1+d/\nu_0}_{2dp/\nu_0} \sum_{ k^\parallel \in \Z^{d-1}} \lt(1+|k^\parallel| + \lt|x_0^\perp\rt|\rt)^{-d+1-\nu}
\\
&\lesssim  c^{1+d/\nu_0}_{2dp/\nu_0} \lt( 1 + \lt|x_0^\perp\rt|\rt)^{-\nu}.
\end{aligned}
\label{Estim8}	    
\end{align}
(By monotonicity, we have $c_{2dp/\nu_0}>c_{2p}$.) Thus, we have established \eqref{Claim0}.

\paragraph{Step 2:}
Here comes the argument for \eqref{Satisw}.
Recall that $\nabla \phi$ is uniquely defined (almost-surely).
Therefore, by \eqref{Defw2}, changing the anchoring of $\phi_{\pm}$ in the sense of $x_0 \rightsquigarrow x \in \R^d$, implies replacing $\nabla \tilde{\phi}[x_0] \rightsquigarrow \nabla \tilde{\phi}[x]$ as follows:
\begin{align}
\begin{aligned}
\nabla \tilde{\phi}_k[x]=&\nabla \tilde{\phi}_k[x_0] 
+   \mathds{1}_{\R_+\times \R^{d-1}}\lt(\fint_{\Q(x_0,1)}\phi_+ -  \fint_{\Q(x,1)}\phi_+\rt) \cdot \nabla P_k \nabla \chi
\\
&+  \mathds{1}_{\R_-\times \R^{d-1}}\lt(\fint_{\Q(x_0,1)}\phi_-  - \fint_{\Q(x,1)}\phi_-\rt) \cdot \nabla P_k \nabla \chi.
\end{aligned}
\label{Remplacer}
\end{align}
In particular, changing the anchoring point $x_0$ does not change the value of $\nabla \tilde{\phi}[x_0]$ outside of $[-3,3] \times \R^{d-1}$.
Hence, for any $x \in \R^d \backslash ([-4,4] \times \R^{d-1})$, \eqref{Claim0} yields that
\begin{align}
\begin{aligned}
\langl \lt(\fint_{\Q(x,1)} \lt|\nabla \tilde{\phi}[x_0]\rt|^2 \rt)^{\frac{p}{2}} \rangl^{\frac{1}{p}} 
& =\langl \lt(\fint_{\Q(x,1)} \lt|\nabla \tilde{\phi}[x]\rt|^2 \rt)^{\frac{p}{2}} \rangl^{\frac{1}{p}} 
\\
& \lesssim c^{1+d/\nu_0}_{2dp/\nu_0}  \lt(1 + \lt|x^\perp\rt|\rt)^{-\nu}.  
\end{aligned}
\label{Claim3}
\end{align}
Moreover, when $x \in [-4,4] \times \R^{d-1}$, using \eqref{Remplacer}, by assumption \eqref{CorrSubDef} in the form \eqref{Num:1011} and by \eqref{Claim0}, we obtain:
\begin{align*}
& \langl \lt(\int_{\Q(x,1)} \lt|\nabla \tilde{\phi}[x_0] \rt|^2  \rt)^{\frac{p}{2}} \rangl^{\frac{1}{p}}
\\
&\lesssim \langl \lt(\int_{\Q(x,1)} \lt|\nabla \tilde{\phi}[x] \rt|^2 \rt)^{\frac{p}{2}} \rangl^{\frac{1}{p}}
+ \langl \lt|  \int_{\Q(x,1)}\phi_{\pm} -  \int_{\Q(x_0,1)}\phi_{\pm} \rt|^p \rangl^{\frac{1}{p}} 
\\
&\lesssim c^{1+d/\nu_0}_{2dp/\nu_0}  \lt(1 + |x_0-x|\rt)^{1-\nu}.
\end{align*}
As a consequence, \eqref{Satisw} holds.

\paragraph{Step 3:}
We now establish \eqref{CorrSubNu}. To estimate $\phi$, we set a suitable anchoring point $x_0 \in \R^d$ and apply the triangle inequality on \eqref{Defw2}: The part $(1-\chi)\check{\phi}[x_0]$ is treated with \eqref{CorrSubDef} whereas $\tilde{\phi}[x_0]$ is handled by integrating $\nabla \tilde{\phi}[x_0]$, which is controlled by \eqref{Satisw}, along a suitable path $\Gamma$.

Fix $x, y \in \R^d$; we assume that $x^\perp \leq -4 < 4 \leq y^\perp$. (It is easy to check that our method extends to the general case.) We then introduce the points
\begin{align}
\label{points}
\begin{aligned}
& x_0 :=\lt(0,x^\parallel\rt), && x_1:=\lt(2 |x-y|,x^\parallel\rt), & \textrm{ and }& x_2 :=\lt(2 |x-y|,y^\parallel\rt).
\end{aligned}
\end{align}
From these points we draw the path $\Gamma$ parametrized by
\begin{align}
\label{path}
\gamma(t)= 
\begin{cases}
x (1 - 3t) + x_1 3 t  &\quad \text{for} \quad 0 \leq t \leq 1/3,\\
x_1 (2 - 3t) +  x_2 (3t - 1) &\quad \text{for} \quad 1/3 \leq t \leq 2/3, \\
x_2 (3 - 3t)+ y  (3t - 2) & \quad \text{for} \quad 2/3 \leq t \leq 1.\\
\end{cases}
\end{align}

\begin{figure}[h]
\begin{center}
\includegraphics{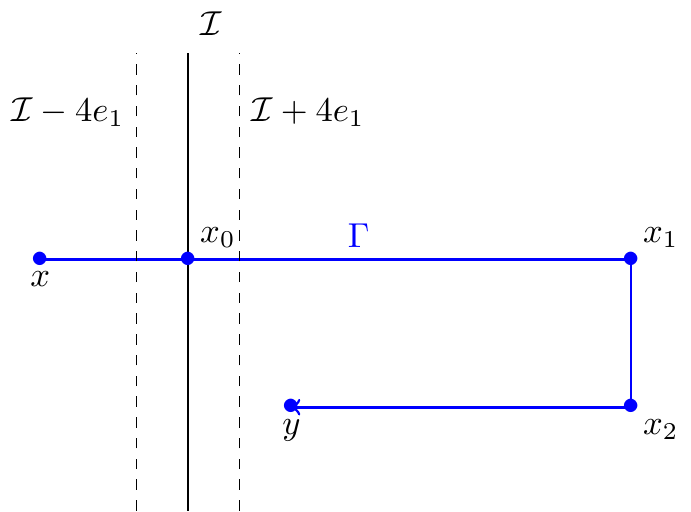}
\end{center}
\caption{Path of integration $\Gamma$.}\label{FigPath}
\end{figure}

By \eqref{Num:1011}, using $x_0$ as the reference point, there holds:
\begin{align*}
& \langl \left(  \int_{\Q(0,1)}\lt|\check{\phi}[x_0](x+z)-\check{\phi}[x_0](y+z)\rt|^2 \dd z \right)^{\frac{p}{2}} \rangl^{\frac{1}{p}}
\lesssim_d c_p( 1+ \lt|x-y\rt|^{1-\nu}).
\end{align*}
Combining  \eqref{Num:1011} with the splitting \eqref{Defw2} and an application of the triangle inequality, we obtain:
\begin{align*}
&  \langl \left(  \int_{\Q(0,1)}\lt|\phi(x+z)-\phi(y+z)\rt|^2 \dd z \right)^{\frac{p}{2}} \rangl^{\frac{1}{p}}\\
& \lesssim c_p ( 1+ \lt|x-y\rt|^{1-\nu}) + \langl \left(  \int_{\Q(0,1)}\lt|\tilde{\phi}[x_0](x+z)-\tilde{\phi}[x_0](y+z)\rt|^2 \dd z \right)^{\frac{p}{2}} \rangl^{\frac{1}{p}},
\end{align*}
which implies that proving \eqref{CorrSubNu} amounts to establishing
\begin{align}
\label{ToEstab}
\begin{split}
& \langl \left(  \int_{\Q(0,1)}\lt|\tilde{\phi}[x_0](x+z)-\tilde{\phi}[x_0](y+z)\rt|^2 \dd z \right)^{\frac{p}{2}} \rangl^{\frac{1}{p}}   
\\
&\lesssim c^{1+d/\nu_0}_{2dp/\nu_0} \begin{cases}
(1 + |x-y|)^{1-\nu} & \textrm{if} \quad \nu < 1,\\
\ln(2 + |x-y|)  & \textrm{if} \quad \nu = 1.\\
\end{cases} 
\end{split}
\end{align}

For \eqref{ToEstab}, we first use the Bochner inequality:
\begin{align*}
& \lt(\int_{\Q(0,1)}  \lt| \tilde{\phi}[x_0](x+z)  -\tilde{\phi}[x_0](y+z) \rt|^2 \dd z \rt)^{\frac{1}{2}}
\\
&= \lt(\int_{\Q(0,1)}  \lt| \int_{0}^1 \nabla \tilde{\phi}[x_0](z + \gamma(t)) \gamma^{\prime}(t) \, \dd t  \rt|^2 \dd z \rt)^{\frac{1}{2}}
\\
&\leq \int_{0}^1 \lt( \int_{\Q(\gamma(t),1)} \lt|\nabla \tilde{\phi}[x_0](z) \rt|^2 \dd z\rt)^{\frac{1}{2}} \lt| \gamma^{\prime}(t) \rt| \dd t,
\end{align*}
where $\gamma$ is defined in \eqref{path}.
As a consequence, using once more the Bochner inequality, we get from \eqref{Satisw} that
\begin{align*}
&\langl \lt(\int_{\Q(0,1)}  \lt| \tilde{\phi}[x_0](x+z)  -\tilde{\phi}[x_0](y+z) \rt|^2 \dd z \rt)^{\frac{p}{2}} \rangl^{\frac{1}{p}}
\\
&\lesssim \int_{0}^1 \langl \lt( \int_{\Q(\gamma(t),1)} \lt|\nabla \tilde{\phi}[x_0](z) \rt|^2 \dd z\rt)^{\frac{p}{2}}\rangl^{\frac{1}{p}} \lt| \gamma^{\prime}(t) \rt| \dd t
\\
&\lesssim
c^{1+d/\nu_0}_{2dp/\nu_0} \int_{0}^{1}  \lt(1+\lt|\gamma(t)^{\perp} \rt|\rt)^{-\nu} |\gamma^{\prime}(t)| \, \dd t  
\\
&\lesssim  c^{1+d/\nu_0}_{dp/\nu_0}
\begin{cases}
(1 + |x-y|)^{1-\nu} & \textrm{if} \quad \nu < 1,\\
\ln(2 + |x-y|)  & \textrm{if} \quad \nu = 1.\\
\end{cases}
\end{align*}
This establishes \eqref{ToEstab}, so that \eqref{CorrSubNu} is proved.

\paragraph{Step 4:} 
We now consider the ungauged corrector $\sigma^{\rm u} [x_0']$ defined by \eqref{Defsigmau}, for $x_0'=(0,x_0^\parallel)$.

By splitting $\widetilde{N}[x_0']$ into a far-field and near-field contribution (depending on the support of the right-hand side), we can use the Green's function associated to the Laplacian for the far-field piece and the standard energy estimate for the near-field piece to obtain:
\begin{align*}
\int_{\Boule(x,1)} \lt| \nabla^2 \widetilde{N}[x_0']\rt|^2 
\lesssim & \int_{\Q(x,   2)} |g[x_0']|^2 + |\nabla \tilde{\phi}[x_0']|^2
\\
&+\lt( \int_{\R^d \backslash \Q(x,2)} \frac{ |g[x_0'](z)| +|\nabla \tilde{\phi}[x_0'](z)| }{|x-z|^d} \dd z\rt)^2.
\end{align*}
Using \eqref{Satisw} and \eqref{Borneg}, we can then decompose the above estimate to find that 
\begin{align}
\label{Estim12}
\begin{aligned}
& \langl \lt(\int_{\Boule(x,1)} \lt| \nabla^2 \widetilde{N}[x_0']\rt|^2 \rt)^{\frac{p}{2}}\rangl^{\frac{1}{p}}
\\
& \lesssim  \sum_{k \in \Z^d} \frac{1}{\lt(|k-x| + 1 \rt)^{d}}\langl \lt(\int_{\Q(k,1)} \lt|g[x_0']\rt|^2 + \lt|\nabla \tilde{\phi}[x_0']\rt|^2  \rt)^{\frac{p}{2}} \rangl^{\frac{1}{p}}
\\
&\lesssim
c^{1+d/\nu_0}_{2dp/\nu_0} \lt( \sum_{\scriptsize{ \begin{array}{c} k \in \Z^d, \\ |k^\perp| \leq 4 \end{array}}} \frac{\lt(1 + |k-x_0'|\rt)^{1-\nu}}{\lt(|k-x| +1\rt)^{d}} 
+\sum_{\scriptsize{ \begin{array}{c} k \in \Z^d, \\ |k^\perp| \geq 4 \end{array}}} \frac{|k^\perp|^{-\nu}}{\lt(|k-x|+1\rt)^{d}} \rt).
\end{aligned}
\end{align}
Now, up to a multiplicative constant, we bound the first right-hand term of \eqref{Estim12} by
\begin{align*}
\sum_{k^\parallel \in \Z^{d-1}} \frac{\lt(1+|k^\parallel-x_0^\parallel|\rt)^{1-\nu}}{\lt(1+|k^\parallel-x^\parallel|+|x^\perp|\rt)^{d}}=&\sum_{k^\parallel \in \Z^{d-1}} \frac{\lt(1+|k^\parallel-x_0^\parallel+x^\parallel|\rt)^{1-\nu}}{\lt(1+|k^\parallel|+|x^\perp|\rt)^{d}}
\\
\lesssim& \lt(1 + |x^\parallel-x_0^\parallel |\rt)^{1-\nu} \lt(1 + |x^\perp|\rt)^{-1} 
\\
&\quad + \lt(1 + |x^\perp|\rt)^{-\nu}
\end{align*}
and the second right-hand term of \eqref{Estim12} by
\begin{align}
\label{new_sum}
\sum_{k^\perp \in \Z \setminus \left\{ 0 \right\} } \frac{ | k^\perp|^{-\nu}}{|k^\perp - x^\perp| + 1}
\lesssim \lt(1 + |x^\perp|\rt)^{-\nu} \ln\lt(2+ |x^\perp|\rt). 
\end{align}

By summation, we deduce from the three above inequalities that
\begin{align*}
\langl \lt(\int_{\Boule(x,1)} | \nabla^2 \widetilde{N}[x_0'] |^2 \rt)^{\frac{p}{2}} \rangl^{\frac{1}{p}}
\lesssim & c^{1+d/\nu_0}_{2dp/\nu_0}\lt(1 + |x^\parallel-x_0^\parallel|\rt)^{1-\nu} \lt(1 + \lt|x^\perp\rt|\rt)^{-1} 
\\
&+ c^{1+d/\nu_0}_{2dp/\nu_0}\lt(1 + \lt|x^\perp\rt|\rt)^{-\nu}  \ln\lt(2+ \lt|x^\perp\rt|\rt) .
\end{align*}
Integrating the previous estimate along a path $\Gamma$ similar to Step 3, we obtain that
\begin{align*}
& \langl \lt(\int_{\Boule(0,1)} \lt|\nabla \widetilde{N}[x_0'](x_0+ \cdot) - \nabla \widetilde{N}[x_0'](x + \cdot)\rt|^2 \rt)^{\frac{p}{2}} \rangl^{\frac{1}{p}}\\
& \quad \lesssim  c^{1+d/\nu_0}_{2dp/\nu_0}
\lt|x - x_0\rt|^{1-\nu} \ln^{1+ \lfloor \nu \rfloor}\lt(2 + \lt|x-x_0\rt| \rt).
\end{align*}
As a consequence, for any $R \geq 1$ and $\nu \in \left(0,1\right]$, we find:
\begin{align*}
&  \left\langle \left( \fint_{\Boule(x_0,R)} \lt| \nabla \widetilde{N}[x_0] - \fint_{\Boule(x_0,1)} \nabla \widetilde{N}[x_0] \rt|^2 \right)^{\frac{p}{2}} \right\rangle^{2/p} \\
& \lesssim   \lt(c^{1+d/\nu_0}_{2dp/\nu_0}\rt)^2 R^{-d} \int_{\Boule(0,R)} |z|^{2(1-\nu)} \ln^{2 (1+ \lfloor \nu \rfloor) }\lt(2+ \lt|z\rt| \rt) \dd z
\\
& \lesssim \lt(c^{1+d/\nu_0}_{2dp/\nu_0}\rt)^2 R^{2(1-\nu)}\ln^{2 (1+ \lfloor \nu \rfloor) }(2+R).
\end{align*} 
Then, recalling \eqref{CorrSubDef} in the form \eqref{Num:1011}, we obtain that $\sigma^{\rm u} $ defined by \eqref{Defsigmau} satisfies \eqref{PrevStep}.

\paragraph{Step 5:}
From the previous step, we have a family of ungauged flux correctors $\sigma^{\rm u}[x_0^{\prime}]$ depending on the anchoring point $x_0'=(0,x_0^\parallel) \in \mathbb{R}^d$ and satisfying \eqref{PrevStep}. Thus, we may apply Lemma \ref{enforce_gauge_1} to each $\sigma^{\rm u}[x^{\prime}_0]$ and obtain a new estimate on the $\langl\cdot\rangl$-almost surely unique (up to the addition of a random constant) flux corrector $\sigma$.
Namely, the latter satisfies \eqref{PrevStep_new}, for any $x \in \R^d$.

Finally, we notice that, for fixed $x, y \in \mathbb{R}^d$ such that $|x-y| =R \geq 1$, we may apply \eqref{PrevStep_new} for $x$ and $y$ with $r =1$ and then again for $y$ with $r=2R$, in order to obtain:
\begin{align*}
\int_{\Boule(0,1)} |\sigma(x + z) - \sigma(y+z)| \dd z \lesssim  1 + R^{1 - \nu} \ln^{1 + 2 \lfloor \nu \rfloor}(2 + R)
\end{align*}
as desired.
\end{proof}

\section{Proof of Corollary \ref{ThCvgDet}}\label{Sec_proof_ThCvgDet}

Since a very similar version of Corollary \ref{ThCvgDet} has already been proved in \cite{Josien_InterfPer_2018}, we only emphasize the main steps.
Indeed, the only technical difference between the setting in \cite{Josien_InterfPer_2018} and here is that the generalized correctors considered here are not bounded, but only satisfy \eqref{CorrSubNu}. We refer the interested reader to \cite{Josien_InterfPer_2018} for the technicalities due to the interface, and to \cite{Article_Homog} for the technicalities due to the growth rate \eqref{CorrSubNu} of the generalized correctors. We also refer to \cite{KLSGreenNeumann}, from which the main ideas of the aforementioned articles are borrowed.

\begin{proof}[Proof of Corollary \ref{ThCvgDet}]
We first justify \eqref{Eq:ThLinfty3-1}, which corresponds to \cite[Prop.\ 4.3]{Josien_InterfPer_2018}.
It is a consequence of \eqref{Algebre}, that can be reformulated thanks to the Green's function $G^\epsilon$ associated with the operator $-\div\lt(a(\cdot/\epsilon)\nabla \cdot \rt)$ in $\R^d$ as
\begin{align*}
&u^\epsilon(x) - \overline{u}(x) - \epsilon \phi\lt(\frac{x}{\epsilon}\rt) \cdot \overline{\nabla} \overline{u}(x)
\\
&=
-\epsilon\int_{\R^d} \partial_{y_i}G^\epsilon(x,y) \lt(\lt(a_{ij}\phi_k-\sigma_{ijk}\rt)\lt(\frac{y}{\epsilon}  \rt)\partial_j \overline{\partial}_k \overline{u}(y)  \rt) \dd y.
\end{align*}
By applying a Hölder inequality on the above right-hand term, in which we inject the regularity of $\overline{\nabla} \overline{u}$ (see \cite[Th.]{Lorenzi_1972}), the estimates on the generalized correctors provided by \eqref{CorrSubNu} and the following estimate  on the Green's function
\begin{align}
\label{Green2prim}
\lt|\nabla_y G^{\epsilon}(x,y) \rt| \lesssim |x-y|^{-d+1},
\end{align}
(which follows from \cite[Th.\ 1]{BellaGiunti_2018}) we obtain \eqref{Eq:ThLinfty3-1}.

By a duality argument detailed in \cite[Th.\ 4.6]{Article_Homog} and in \cite[Prop.\ 4.4]{Josien_InterfPer_2018}, we deduce from \eqref{Eq:ThLinfty3-1} that, for any $x \neq y$, the following estimate hold:
\begin{align}
\left|G^\epsilon(x,y) - \bar{G}(x,y)\rt| 
&\lesssim
\epsilon^\nu  \frac{\ln^{1+ 2\lfloor \nu \rfloor}\lt(2 + \epsilon^{-1} |x-y|\rt)}{|x-y|^{d-2+\nu}},
\label{DefGreen1}
\end{align}
where $\bar{G}$ is the Green's function of the homogenized operator $-\div\lt(\overline{a} \nabla \cdot \rt)$ in $\R^d$.

As in \cite[Th.\ 4.5]{Josien_InterfPer_2018}, it is deduced from Lipschitz regularity (\textit{\textit{i.e.}} Theorem \ref{ThAL}) that the previous estimate \eqref{DefGreen1} can be upgraded to the level of the gradients:
\begin{align}
\label{DefGreen2}
\begin{split}
& \left| \nabla_x G^\epsilon(x,y)-\nabla_x\bar{G}(x,y)-\nabla \phi\lt(\frac{x}{\epsilon}\rt) \cdot \overline{\nabla}_x \bar{G}(x,y) \right|\\
&\leq C \epsilon^\nu \frac{\ln^{2 + 2\lfloor \nu \rfloor }\lt(2+\epsilon^{-1}|x-y|\rt) }{|x-y|^{d-1+\nu}}.
\end{split}
\end{align}

Finally, as in the proof of \cite[Cor.\ 4.6]{Josien_InterfPer_2018}, we express
\begin{align*}
&\nabla u^\epsilon(x) - \nabla \overline{u}(x) - \nabla\phi\lt(\frac{x}{\epsilon} \rt) \cdot \overline{\nabla} \overline{u}(x)
\\
&=\int_{\Boule(x_0,1)} \lt( \nabla_x G^{\epsilon}(x,y) - \nabla \bar{G}(x,y) -\nabla\phi\lt(\frac{x}{\epsilon} \rt) \cdot \overline{\nabla} \bar{G}(x,y)\rt) f(y) \dd y.
\end{align*}
Then, by using a simple Hölder inequality involving the previous estimate, we obtain the desired result \eqref{Eq:ThWLinfty}. (In the case $\nu=1$, an additional decomposition is required to avoid the singularity $x=y$ for $\nu=1$; it is detailed in \cite[Cor.\ 4.6]{Josien_InterfPer_2018}.)
\end{proof}

\section{Proof of Proposition \ref{PropContrex}}\label{Sec_Proof_PropContrex}

\begin{proof}
In the sequel, the symbol ``$\lesssim$'' will be used for ``$\lesssim_{d, \eta_1, \eta_2}$''. The equation on $\phi_1$ reads:
\begin{align*}
&-\div\lt( a \nabla \phi_1\rt)=\div\lt( ae_1\rt) = \div\lt( \eta e_1 \rt) \qquad \quad\text{in}\quad \R^d.
\end{align*}
The strategy of the proof is to make use of the Green's function $G$ associated with the operator $-\div\lt(a \cdot\nabla\rt)$ to express the growth rate of $\phi_1$ between two points $x$ and $x' \in \R^d$:
\begin{align}\label{GwthRate_strip}
\phi_1(x')-\phi_1(x)
=& \int_{[x',x]} \lt(\int_{D} \eta(y) \nabla_x\nabla_y G(x'',y) \cdot e_1 \dd y\rt) \cdot \dd x''.
\end{align}

The proof then falls in two steps.
Step 1 is devoted to deriving an asymptotic estimate for $\nabla_y G(x,y)$ when $y$ is far from the interface $\mathcal{I}$.
Equipped with this, in Step 2, we choose suitable points $x$ and $x' \in \R^d$ and explicitly compute the leading order of \eqref{GwthRate_strip}, thus obtaining \eqref{BorneCorr_strip}.

\paragraph{Step 1:}
Recall that the correctors $\phi_\pm$ and $\sigma_\pm$ are bounded.
Therefore, we may apply Theorem \ref{PropRefinedDeter} to obtain strictly sublinear generalized correctors $\lt(\phi,\sigma\rt)$.
Since the correctors $\Phi_\pm$ are uniformly bounded in $\R^d$, we have the additional estimate
\begin{align}\label{ApproxBarphi}
\lt|\nabla \phi(y) \rt| \lesssim \lt(1+\lt|y^\perp\rt|\rt)^{-1},
\end{align}
for any $y \in \R^d$. (The above estimate is \eqref{Claim0} turned into a pointwise estimate thanks to the smoothness of $a$).

Also, the Green's function $G$ satisfies \eqref{DefGreen2}.
Thus, we may deduce from the regularity and symmetry of $a$ the following (suboptimal but convenient) estimate:
\begin{align}
\left| \nabla_y G (x,y) + \lt(\Id+ \nabla \phi\lt(y\rt)\rt) \cdot \nabla \bar{G}(x-y) \right|
&\lesssim |x-y|^{-d+\frac{1}{2}},
\label{GreenDef_strip}
\end{align}
for any $x, y \in \R^d \backslash \mathcal{I}$, $|x-y| \geq 1$, where $\bar{G}$ is the Green's function of the operator $-\Delta$. 
The latter function satisfies
\begin{align}\label{GDelta_strip}
\lt\{
\begin{aligned}
& \bar{G}(x)=C_d |x|^{-d+2}, \quad   \nabla\bar{G}(x)=
-C_d(d-2) \frac{x,}{|x|^{d}},
\\
&\nabla \nabla\bar{G}(x)=C_d (d-2) \frac{d (x \otimes x) - |x|^2 \Id}{|x|^{d+2}}.
\end{aligned}
\rt.
\end{align}
for a universal constant $C_d>0$ (see \cite[(4.1) Chap.\ 4 p.\ 51]{GT}).

By a triangle inequality involving \eqref{ApproxBarphi}, \eqref{GreenDef_strip}, and \eqref{GDelta_strip}, we obtain an approximation of $\nabla_y G(x,y)$ in the form of
\begin{align}
&\left| \nabla_y G (x,y) + \nabla \bar{G}(x-y) \right|
\lesssim \frac{|x-y|^{-\frac{1}{2}} +\lt(1+\lt|y^\perp\rt|\rt)^{-1}}{|x-y|^{d-1}},
\label{Green2_strip}
\end{align}
for any $x, y \in \R^d$ with $|x-y| \geq 1$.

\paragraph{Step 2:}
We now show that, given $r\geq 1$, estimate \eqref{BorneCorr_strip} holds for the following choice of $x$ and $x'$:
\begin{align*}
x:=-e_1 \quad\text{and}\quad x':=-(r+1) e_1.
\end{align*}
More precisely, we prove that there exists a positive constant $C$ such that
\begin{align}\label{ee7_strip}
\lt|\phi_1(x)-\phi_1(x')\rt| \geq C \ln(r) + O(1).
\end{align}

We split the strip $D$, defined in \eqref{DefD}, into a far domain and a near domain:
\begin{align}
\label{far_and_near}
D_1:= [r,+\infty) \times [-1,1] \times \R^{d-2} \quad\text{and}\quad D_2:= [0,r] \times [-1,1] \times \R^{d-2}
\end{align}
(see Figure \ref{FigureDefa_strip}).
The identity \eqref{GwthRate_strip} is accordingly reinterpreted as
\begin{align}
\phi_1(x')-\phi_1(x)
=&\int_{[x',x]} \lt(\int_{D_1} \eta(y) \nabla_x\partial_{y_1} G(x'',y)\dd y\rt) \cdot \dd x''
\nonumber
\\
&+ \int_{[x',x]} \lt(\int_{D_2} \eta(y) \nabla_x\partial_{y_1} G(x'',y) \dd y\rt) \cdot \dd x''
=:I_1 + I_2.
\label{eq101_strip}
\end{align}

By Lemma \ref{LemGreenDet} combined with \eqref{Green2prim} and the regularity of $a$, the Green's function $G$ satisfies the following estimates:
\begin{align}
\label{Green20_strip}
\lt|\nabla_y G(x,y) \rt| &\lesssim |x-y|^{-d+1} \quad\text{and}\quad \lt|\nabla_x \nabla_y G(x,y)\rt| \lesssim |x-y|^{-d},
\end{align}
for any $x, y\in \R^d \backslash \mathcal{I}$ with $|x-y| \geq 1$.
Whence, we may estimate $I_1$ defined in \eqref{eq101_strip} by
\begin{align}
\lt| I_1\rt|
&\lesssim
\int_{[x',x]} \lt(\int_{D_1} \frac{1}{|x''-y|^d} \dd y \rt) \lt|\dd x''\rt|
\nonumber
\\
&\lesssim
r \lt(\int_{[-1,1] \times \R_+ \times \R^{d-2}} \frac{1}{\lt(r+|y|\rt)^d} \dd y \rt)
\lesssim 1,
\label{ee5_strip}
\end{align}
where we used that $|x-x'|=r$.

We now consider $I_2$ in \eqref{eq101_strip}.
We first integrate along the $x''$ variable:
\begin{align*}
I_2
&=\int_{D_2} \eta(y) \partial_{y_1} G(x,y) \dd y -\int_{D_2} \eta(y)\partial_{y_1} G(x',y) \dd y
=:I_{21}+I_{22}.
\end{align*}
On the one hand, we treat the second integral by using \eqref{Green20_strip}
\begin{align}
\nonumber
\lt|I_{22}\rt|
\lesssim & \int_{D_2} \frac{1}{|x'-y|^{d-1}} \dd y
\lesssim \int_0^r \int_{\R^{d-2}} \frac{1}{ \lt(r+1+y_1 + \lt|\tilde{y}\rt|\rt)^{d-1}}\dd y_1 \dd \tilde{y}
\\
\label{ee6_strip}
\lesssim& \int_0^r \frac{1}{(r+1+y_1)}\dd y_1 \lesssim 1.
\end{align}
On the other hand, by \eqref{Green2_strip} and since $1\leq |x-y| \lesssim |x'-y|$ if $y \in D_2$, we approximate $I_{21}$ as follows:
\begin{align}
\nonumber
\lt|I_{21} +\int_{D_2} \eta  \partial_1 \bar{G}(x-\cdot) \rt|
& \lesssim \int_{D_2} \frac{|x-y|^{-1/2}+\lt(1+\lt|y^\perp\rt|\rt)^{-1}}{|x-y|^{d-1}} \dd y
\\
\nonumber
&\lesssim \int_0^{r} \int_{\R^{d-2}} 
\frac{\lt(1+y_1+|\tilde{y}|\rt)^{-1/2}+\lt(1+\lt|y_1\rt|\rt)^{-1}}{\lt(1+y_1+|\tilde{y}|\rt)^{d-1}}
\dd \tilde{y} \dd y_1
\\
\nonumber
&\lesssim \int_0^{+\infty} \lt[\frac{1}{|1+y_1|^{3/2}} 
+ \frac{\lt(1+\lt|y_1\rt|\rt)^{-1}}{\lt( |1+y_1|\rt)}
\rt]
\dd y_1
\lesssim 1.
\end{align}

We then further simplify $I_{21}$ by using a Taylor expansion to remove the dependence of $\partial_1 \bar{G}(z)$ on $z\cdot e_2$. In particular, this requires the decay of $\nabla^2 \bar{G}$ provided by \eqref{GDelta_strip} and the observation that $\eta$ is supported in the set $ \R_+ \times [-1,1] \times \R^{d-2}$. Recalling that $\eta$ is defined by \eqref{Defeta_strip}, that $\eta_1=1$ in $[1,+\infty)$, and substituting $x = -e_1$, we then obtain:
\begin{align}
I_{21}=-C_0 \int_{0}^{r} \int_{\R^{d-2}} \partial_1 \bar{G}\lt(\lt(-1-y_1,0,\tilde{y})\rt)\rt) \dd \tilde{y}\dd y_1
+O(1),
\label{ee2_strip}
\end{align}
for the positive constant
$C_0:=\int_{\R} \eta_2>0$.
Therefore, injecting \eqref{GDelta_strip} in \eqref{ee2_strip}, and then performing the change of variables $\tilde{y}=(1+y_1)\tilde{z}$, we may explicitly compute the above integral, to the effect of
\begin{equation}
\label{ee3_strip}
\begin{aligned}
I_{21}
=&
-C_1 \int_{0}^{r} \int_{\R^{d-2}}\frac{1+y_1}{\lt( \lt(1+y_1\rt)^2 + \lt|\tilde{y}\rt|^2 \rt)^{\frac{d}{2}}} \dd \tilde{y}\dd y_1
+O(1)
\\
=&-C_2 \int_{0}^{r} \frac{1}{\lt(1+y_1\rt)} \dd y_1 + O(1)
=-C_2 \ln(r+1) + O(1)
\end{aligned}
\end{equation} 
for positive constants $C_1, C_2>0$.

Combining \eqref{eq101_strip}, \eqref{ee5_strip}, \eqref{ee6_strip} and \eqref{ee3_strip} establishes \eqref{ee7_strip} and  concludes the proof of Proposition \ref{PropContrex}.
\end{proof}

\section{Proof of Theorem \ref{ThIndepGauss}}\label{Sec_Proof_ThIndepGauss}

\begin{remark}
For the sake of simplicity, we make use of Theorem \ref{PropRefinedDeter} to show Theorem \ref{ThIndepGauss}.
However, we only need the uniqueness of correctors and a (suboptimal) growth quantification (the one provided by Proposition \ref{PropExistCorr} is sufficient).
\end{remark}

\begin{proof}[Proof of Theorem \ref{ThIndepGauss}]

Existence and uniqueness (up to the addition of a constant) of strictly sublinear generalized correctors is already provided by Theorem \ref{PropRefinedDeter}.
Thus, we only have to check that \eqref{SublinRes} is satisfied.

We follow the classical approach described in the lecture notes \cite{JosienOtto_2019}.
First, we obtain from Theorem \ref{PropRefinedDeter} and from the regularity of the coefficient field $a$ a bound on the moments of the gradient of $\phi$ and $\sigma$:
\begin{align}
\label{LpGrad}
\sup_{x \in \R^d \backslash \lt( -e_1+\mathcal{I} \cup e_1+\mathcal{I} \rt)  }\langl  \lt|\lt(\nabla \phi,\nabla \sigma\rt)(x)\rt|^{p} \rangl^{\frac{1}{p}} \lesssim 1. 
\end{align}
Next, we establish that \eqref{Prop21a} holds.
We finally obtain \eqref{SublinRes} by applying \eqref{Prop21a} to a suitable functional
and by making use of the regularity of $a$.

Before proceeding with the proof, we make a couple of simplifications. We assume that $d_{\rm g}=1$, and that $d \geq 3$. (The only difference for the case $d=2$ is located in Step 3, where dimension plays a role in the potential theory).

\paragraph{Step 1:}
Let $x \in \R^d$.
By combining the Caccioppoli estimate, \eqref{CorrSubNu}, \eqref{CorrSubNuSig}, we easily obtain that
\begin{align*}
\langl\lt(\fint_{\Boule(x,1)} \lt|\lt(\nabla \phi,\nabla\sigma\rt)\rt|^2 \rt)^p \rangl  \lesssim 1.
\end{align*}
Then, by \eqref{wg03} and \eqref{HypoA_bis}, the coefficient $a$ is $\langle \cdot \rangle$-almost surely $\alpha/2$-Hölder continuous in $\Boule(x,1)$ except on the interfaces $-e_1+\mathcal{I}$ and $e_1+\mathcal{I}$, and, for any $p  \in [1,+\infty)$, there holds
\begin{align*}
\langl\lt\| a \rt\|^p_{\CC^{0,\frac{\alpha}{2}} \lt(\Boule(x,1) \backslash \lt( -e_1+\mathcal{I} \cup e_1+\mathcal{I} \rt) \rt)} \rangl^{\frac{1}{p}} \lesssim 1.
\end{align*}

Thus, a local version of the regularity theorem \cite[Th.\ 1.1]{LiNirenberg_2003} successively yields that $\phi$ and $\sigma$ satisfy \eqref{LpGrad}.

\paragraph{Step 2:}
For simplicity, we assume that $F$ only depends on $a_-$ and $a_+$ (the argument below is easily generalized), and that $\langl F \rangl=0$.
We denote by $\langl \cdot\rangl_\pm$ the ensembles associated with $g_\pm$ respectively.
We already know (see \cite[Sec.\ 3.2]{JosienOtto_2019}), that any functional depending only on $g_{\pm}$ satisfies \eqref{Prop21a} with $a$ and $\langl \cdot\rangl$ respectively replaced by $g_{\pm}$ and $\langl\cdot\rangl_{\pm}$.
By independence of $g_-$ and $g_+$, we also have that
\begin{align}
\label{Indep}
\langl G \rangl=\langl \langl G\rangl_- \rangl_+,
\end{align}
for any random variable $G$ depending only on $g_-$ and $g_+$

Therefore, by applying the spectral gap \eqref{Prop21a} only with respect to $\langl \cdot\rangl_-$, there holds:
\begin{align*}
\langl |F|^{2p} \rangl 
=\langl \langl |F|^{2p} \rangl_- \rangl_+
\lesssim & \langl \lt( \int_{\R^d} \lt| \frac{\partial F}{\partial g_-(z)}\rt|^2 \dd z \rt)^{p}  \rangl
+ \langl \lt| \langl F \rangl_-\rt|^{2p} \rangl_+
\end{align*}
Now invoking the spectral gap only with respect to $\langl \cdot\rangl_+$, we obtain:
\begin{align*}
\langl \lt| \langl F \rangl_-\rt|^{2p} \rangl_+
\lesssim&
\langl\lt( \int_{\R^d}\lt| \frac{\partial \langl F \rangl_-}{\partial g_+(z)}\rt|^2 \dd z \rt)^p \rangl_+ + \lt|\langl \langl F \rangl_- \rangl_+\rt|^{2p}
\\
\lesssim&
\langl\lt( \int_{\R^d} \langl\lt| \frac{\partial F}{\partial g_+(z)}\rt|^2 \rangl_- \dd z \rt)^p \rangl_+ + \lt|\langl F \rangl\rt|^{2p}
\\
\lesssim&
\langl\lt( \int_{\R^d} \lt| \frac{\partial F}{\partial g_+(z)}\rt|^2 \dd z \rt)^p \rangl,
\end{align*}
where we have used the Bochner inequality and $\langl F \rangl=0$.
The two above inequalities yield:
\begin{align}
\langl |F|^{2p} \rangl 
\lesssim 
\langl \lt( \int_{\R^d} \lt( \lt| \frac{\partial F}{\partial g_-(z)}\rt| + \lt| \frac{\partial F}{\partial g_+(z)}\rt| \rt) ^2 \dd z \rt)^{p} \rangl.
\end{align}
Finally, by the chain rule (and recalling \eqref{HypoA_bis}), we establish \eqref{Prop21a}.

\paragraph{Step 3:}
For the sake of simplicity, we only show \eqref{SublinRes} for $\phi$. Also, we make take $p$ large (by the Hölder inequality, if \eqref{SublinRes} holds for $p=p_1$, then it also holds for any $p=p_2 \leq p_1$).
Let $x, y \in \R^d$ (we recall that $d \geq 3$ here).
The aim of this step is to establish the following estimate:
\begin{align}\label{Diffmean}
\langl \lt|\fint_{\Boule(x,1)}\phi - \fint_{\Boule(y,1)} \phi \rt|^{2p}\rangl \lesssim 1.
\end{align}

By the Sobolev embedding (provided $2p >d$), there also holds that
\begin{align*}
\lt| \phi(x) - \fint_{\Boule(x,1)} \phi\rt|^{2p} \lesssim \lt( \fint_{\Boule(x,1)} \lt|\nabla \phi\rt|^{2p} \rt).
\end{align*}
Thence, taking the expectation and recalling \eqref{LpGrad}, we deduce that
\begin{align*}
\langl \lt| \phi(x) - \fint_{\Boule(x,1)} \phi\rt|^{2p} \rangl 
\lesssim \langl \fint_{\Boule(x,1)} \lt|\nabla \phi\rt|^{2p} \rangl \lesssim 1.
\end{align*}
Combining this estimate with \eqref{Diffmean} establishes \eqref{SublinRes} and concludes the proof in the case $d \geq 3$.

Here comes the argument for \eqref{Diffmean}.
We define $w$ and $v$ as the strictly sublinear solutions to
\begin{align*}
-\Delta w=\mathds{1}_{\Boule(x,1)} -\mathds{1}_{\Boule(y,1)} \quad\text{and}\quad -\div\lt(a^\star \nabla v + \nabla w\rt)=0.
\end{align*}
We set
\begin{align}\label{Def_F}
F:=\fint_{\Boule(x,1)}\phi - \fint_{\Boule(y,1)} \phi= -\int_{\R^d} \Delta w \phi = \int_{\R^d} \nabla w \cdot \nabla \phi.
\end{align}
Now, notice that
\begin{align*}
\frac{\partial F}{\partial a(z)} = \int_{\R^d} \nabla w \cdot \nabla \frac{\partial \phi}{\partial a(z)} = -\int_{\R^d} \nabla v \cdot a \nabla \frac{\partial \phi}{\partial a(z)}.
\end{align*}
By differentiating the equation \eqref{DefCorr}, there holds:
\begin{align*}
-\div \lt( a \cdot \nabla \frac{\partial \phi_j}{\partial a(z)} + \delta_z \nabla \lt( \phi_j + P_j\rt)^\star  \rt)=0.
\end{align*}
Thus, we obtain:
\begin{align*}
\frac{\partial F}{\partial a(z)} =\lt(\nabla\lt( \phi_j + P_j\rt) \otimes  \nabla v\rt)(z).
\end{align*}

Then, by the spectral gap \eqref{Prop21a} and a duality argument, we deduce that
\begin{align*}
\langl \lt|F \rt|^{2p} \rangl \lesssim& \langl \lt( \int_{\R^d} \lt|\nabla \lt(\phi_j + P_j\rt) \otimes  \nabla v \rt|^2 \rt)^p \rangl
\\
\lesssim& \sup_{\langl |G|^{2p'} \rangl \leq 1} \langl \int_{\R^d}\lt|\nabla \lt( \phi_j + P_j\rt) \otimes  \nabla v \rt|^2 |G|^2 \rangl^p,
\end{align*}
where $p'$ is the conjugated exponent of $p$ (\textit{\textit{i.e.}} $1/p+1/p'=1$).
By the Hölder inequality and thanks to \eqref{LpGrad}, we estimate the above right-hand side as follows:
\begin{align*}
\langl \int_{\R^d}\lt|\lt(\nabla \phi_j + P_j\rt) \otimes  \nabla v \rt|^2 |G|^2 \rangl
&\leq 
\int_{\R^d} 
\langl \lt|\nabla (\phi_j + P_j)\rt|^{2p} \rangl^{\frac{1}{p}} 
\langl  \lt|G\nabla v \rt|^{2 p'} \rangl^{\frac{1}{p'}}
\\
&\lesssim \int_{\R^d} \langl  \lt|G\nabla v \rt|^{2 p'} \rangl^{\frac{1}{p'}}.
\end{align*}
Recall that $G v$ satisfies
\begin{align*}
\div\lt(a^\star \nabla \lt(Gv\rt) + G\nabla w\rt)=0.
\end{align*}
Therefore, if $p'$ is sufficiently small, that is, if $p$ is sufficiently large, we may apply the annealed Meyers estimates provided by \cite[Prop.\ 4.1(i)]{JosienOtto_2019} (where it is sufficient that $a^\star$ is uniformly elliptic and bounded). Thus, we get
\begin{align*}
\int_{\R^d} \langl  \lt|G\nabla v \rt|^{2 p'} \rangl^{\frac{1}{p'}} \lesssim
\int_{\R^d} \langl  \lt|G\nabla w \rt|^{2 p'} \rangl^{\frac{1}{p'}} \lesssim
\int_{\R^d} \lt|\nabla w \rt|^{2} \lesssim 1,
\end{align*}
since $w$ is deterministic and $\langl \lt|G\rt|^{2p'}\rangl \leq 1$ (the last bound being obtained by the potential theory in dimension $d \geq 3$).
This implies \eqref{Diffmean}.
\end{proof}

\paragraph{Acknowledgement}
We would like to thank:
Sonia Fliss, with whom we had very interesting discussions during her stay at the Max Planck Institute in Leipzig;
Felix Otto, for stimulating discussions;
Nicolas Forcadel, who suggested to consider the case of a thin layer between the two heterogeneous media.

\def\cprime{$'$} \def\cprime{$'$} \def\cprime{$'$}

\appendix{}

\section{A technical lemma for harmonic functions}
\label{A}

Throughout this paper we make use of the following technical lemma:
\begin{lemma_appendix}\label{LemDelta}
Let $d \geq 2$, $p \geq 2$, $x_0 \in \R^d$, $\langl \cdot \rangl$ be a given ensemble, and $f \in \LL^p(\Omega,  \LL^2_\loc\lt(\R^d,\R^d\rt))$ be a random vector field. We assume that for any $R\geq 1$ we have:
\begin{align}\label{ee0}
\langl \lt( \fint_{\Boule(x_0,R)} \lt|f\rt|^2 \rt)^{\frac{p}{2}} \rangl^{\frac{1}{p}} \leq C_0R^{1-\nu}\ln^\beta(2+R)
\end{align}
for some exponents $\nu \in (0,1]$ and $\beta \geq 0$.
Then, $\langle \cdot \rangle$-almost surely, there exists a distributional solution $u \in \HH^1_\loc(\R^d)$ of
\begin{align}\label{ee3}
-\Delta u = \nabla \cdot f \quad\text{in}\quad \R^d
\end{align}
subject to the constraint:
\begin{align}\label{ee10}
\limsup_{R\rightarrow + \infty} R^{-1}\lt(\fint_{\Boule(x_0,R)} \lt|\nabla u-\fint_{\Boule(x_0,R)} \nabla u  \rt|^2 \rt)^{\frac{1}{2}}=0.
\end{align}
The solution $u$ is unique up to the addition of affine functions.
Moreover, for any $R \geq 1$ we have that 
\begin{align}
\begin{aligned}
\langl \lt( \fint_{\Boule(x_0,R)} \lt|\nabla u - \fint_{\Boule(x_0,R)} \nabla u\rt|^2   \rt)^{\frac{p}{2}} \rangl^{\frac{1}{p}}
\lesssim_{d,\nu,\beta} C_0 R^{1-\nu} \ln^\beta(2+R)
\end{aligned}
\label{ee2}
\end{align}
and
\begin{equation}
\label{SpecialCase}
\begin{aligned}
&\langl \lt( \fint_{\Boule(x_0,R)} \lt|\nabla u - \fint_{\Boule(x_0,1)} \nabla u \rt|^2   \rt)^{\frac{p}{2}} \rangl^{\frac{1}{p}} 
\\
& \qquad \lesssim_{d,\nu,\beta}
\lt\{
\begin{aligned}
& C_0R^{1-\nu} \ln^\beta(2+R)&& \quad\text{if}\quad \nu<1,
\\
& C_0 \ln^{\beta+1}(2+R) && \quad\text{if}\quad \nu=1.
\end{aligned}
\rt.
\end{aligned}
\end{equation}
\end{lemma_appendix}

\begin{proof}[Proof of Lemma \ref{LemDelta}]
The uniqueness of the solution $u$ is a direct application of the Liouville principle for harmonic functions. Our argument then breaks down into three steps: First, in Step 1, we prove the $\langle \cdot \rangle$-almost sure existence of a solution to \eqref{ee3}. Then, in Step 2, we show that this solution satisfies \eqref{ee2} for any $R\geq 1$. To finish, in Step 3 we deduce \eqref{SpecialCase} from \eqref{ee2}.

\paragraph{Step 1:}
We decompose
\begin{align*}
f=\sum_{m=1}^{+\infty} f_m \quad\text{for}\quad f_m:=f \mathds{1}_{\Boule\lt(x_0,(2^m-1)\rt) \backslash \Boule\lt(x_0,\lt(2^{m-1}-1\rt)\rt)}
\end{align*}
and let $u_m \in H_{loc}^1(\R^d)$ be the Lax-Milgram solutions of 
\begin{align}
\label{weak_formulation_2}
\int_{\R^d} \nabla \psi \cdot \nabla u_m  = - \int_{\R^d} \nabla \psi \cdot f_m.
\end{align}
Taking $\psi = u_m$ in \eqref{weak_formulation_2} and using H\"{o}lder's inequality and  \eqref{ee0} implies that 
\begin{align}
\nonumber
\langl \lt(\int_{\R^d} \lt|\nabla u_m\rt|^2\rt)^{\frac{p}{2}} \rangl^{\frac{1}{p}} 
\leq&
\langl \lt(\int_{\R^d} \lt|f_m\rt|^2 \rt)^{\frac{p}{2}} \rangl^{\frac{1}{p}}
\\
\leq&
\lt(2^m\rt)^{d/2}\lt(2^m\rt)^{1-\nu}\ln^\beta(2+2^m).
\label{ee19}
\end{align}

We then define:
\begin{align}
\label{defined_1}
\begin{split}
\tilde{u}_m(x):=
\lt\{
\begin{aligned}
&u_m(x)  && \quad\text{if}\quad m=1,
\\
&u_m(x)  -u_m(x_0)- x\cdot \nabla u_m(x_0) && \quad\text{if}\quad m >1,
\end{aligned}
\rt.        
\end{split}
\end{align}
and claim that the series
\begin{align}\label{ee4}
u:=\sum_{m=1}^{+\infty} \tilde{u}_m
\end{align}
almost-surely defines a distributional solution to \eqref{ee3}. In particular, we show that the series in \eqref{ee4} converges in $\HH^1_{\rm{loc}}(\R^d)$ $\langle \cdot \rangle$-almost surely.

Let $R:=2^{m_0}$ for $m_0 \in \mathbb{N}$. Notice that, to show the desired convergence, we may discard the terms of \eqref{ee4} with $m \in \[1,m_0+3\]$ as the estimate \eqref{ee19} ensures that $\langle \cdot \rangle$-almost surely $\nabla u_m \in \LL^2 (\R^d)$ for any $m \geq 1$.
For $m\geq m_0+3$, the function $u_m$ is harmonic in $\Boule(x_0,2R)$, which means that we have access to the mean-value property and the Caccioppoli inequality. We obtain:
\begin{align}
\label{Argument}
\begin{split}
\fint_{\Boule(x_0,R)} \lt|\nabla \tilde{u}_m\rt|^2
\stackrel{\eqref{defined_1}}{\lesssim}
R^2 \sup_{\Boule(x_0,R)} \lt| \nabla^2 u_m\rt|^2
&  \lesssim  R^2 \fint_{\Boule(x_0,2R)} \lt| \nabla^2 u_m\rt|^2
\\
&  \lesssim  R^2 \fint_{\Boule(x_0,2^{m-1})} \lt| \nabla^2 u_m\rt|^2
\\
&  \lesssim  R^22^{-2m} \fint_{\Boule(x_0,2^{m})} \lt| \nabla u_m\rt|^2.
\end{split}
\end{align}
Therefore, recalling \eqref{ee19} we obtain for any $m \geq m_0+3$
\begin{align*}
\langl \lt( \fint_{\Boule(x_0,R)} \lt|\nabla \tilde{u}_m\rt|^2 \rt)^{\frac{p}{2}} \rangl^{\frac{1}{p}}
\lesssim 2^{m_0-m} 2^{m(1-\nu)} \ln^\beta\lt(2+2^m\rt).
\end{align*}
Thus, by the triangle inequality, we get
\begin{align}
\langl \lt(\sum_{m=m_0+3}^{+\infty} \lt( \fint_{\Boule(x_0,R)} \lt|\nabla \tilde{u}_m\rt|^2 \rt)^{\frac{1}{2}}\rt)^p \rangl^{\frac{1}{p}}
&\lesssim 2^{m_0} \sum_{m=m_0+3}^{+\infty} 2^{-m\nu} \ln^\beta\lt(2+2^{m}\rt)
\nonumber
\\
&\lesssim 2^{m_0(1-\nu)} \ln^\beta\lt(2+2^{m_0}\rt),
\label{ee77}
\end{align}
which shows the desired convergence. Hence, $u$ is a solution to \eqref{ee3}.

\paragraph{Step 2:}
As above, we set $R:=2^{m_0}$. We then obtain:
\begin{align*}
& \langl\lt(\fint_{\Boule(x_0,R)} \lt|\nabla \tilde{u}_m -\fint_{\Boule(x_0,R)}\nabla \tilde{u}_m \rt|^2\rt)^{\frac{p}{2}}\rangl^{\frac{1}{p}}\\
&\quad \leq \langl\lt( \fint_{\Boule(x_0,R)} \lt|\nabla \tilde{u}_m\rt|^2 \rt)^{\frac{p}{2}}\rangl^{\frac{1}{p}} \overset{\eqref{ee19}}{\lesssim} 2^{(m-m_0)d/2} 2^{m(1-\nu)} \ln^\beta\lt(2+2^m\rt).
\end{align*}
Whence, by the triangle inequality, we have:
\begin{align}
\nonumber
&\langl\lt(\fint_{\Boule(0,R)} \lt|\sum_{m=1}^{m_0+2} \lt(\nabla \tilde{u}_m -\fint_{\Boule(x_0,R)}\nabla \tilde{u}_m\rt) \rt|^2\rt)^{\frac{p}{2}}\rangl^{\frac{1}{p}}
\\
&\lesssim \sum_{m=1}^{m_0+2} 2^{(m-m_0)d/2} 2^{m(1-\nu)} \ln^\beta\lt(2+2^m\rt)
\lesssim 2^{m_0(1-\nu)} \ln^\beta\lt(2+2^{m_0}\rt).
\label{ee6}
\end{align}
Therefore, by a triangle inequality involving \eqref{ee77} and \eqref{ee6} (recall that $R=2^{m_0}$), we obtain \eqref{ee2}.
Hence
\begin{align*}
\sum_{m_0=1}^{+\infty} 2^{-m_0}\langl\lt( \fint_{\Boule\lt(x_0,2^{m_0}\rt)} \lt|\nabla u - \fint_{\Boule\lt(x_0,2^{m_0}\rt)} \nabla u\rt|^2  \rt)^{\frac{p}{2}}\rangl^{\frac{1}{p}}
< + \infty.
\end{align*}
Therefore, by the Markov inequality and the Borel-Cantelli Lemma, this shows that \eqref{ee10} is $\langle \cdot \rangle$-almost surely satisfied.

\paragraph{Step 3}
We finally replace the innermost integral of the left-hand side of \eqref{ee2} by an average over $\Boule(x_0,1)$.
Indeed, by the Cauchy-Schwarz inequality, there holds:
\begin{align*}
\lt|\fint_{\Boule\lt(x_0,2^{m-1}\rt)} \nabla u - \fint_{\Boule\lt(x_0,2^{m}\rt)} \nabla u\rt|
\leq &
\lt(\fint_{\Boule\lt(x_0,2^{m-1}\rt)} \lt|\nabla u - \fint_{\Boule\lt(x_0,2^{m}\rt)} \nabla u\rt|^2\rt)^{\frac{1}{2}}
\\
\leq & 2^{d/2} \lt(\fint_{\Boule\lt(x_0,2^{m}\rt)} \lt|\nabla u - \fint_{\Boule\lt(x_0,2^{m}\rt)} \nabla u\rt|^2\rt)^{\frac{1}{2}}.
\end{align*}
Hence, invoking \eqref{ee2} yields
\begin{align*}
\langl \lt|\fint_{\Boule\lt(x_0,1\rt)} \nabla u - \fint_{\Boule\lt(x_0,2^{m_0}\rt)} \nabla u\rt|^p \rangl^{\frac{1}{p}}
\lesssim \sum_{m=0}^{m_0} 2^{m(1-\nu)} \ln^{\beta}\lt(2+2^m\rt),
\end{align*}
which, by a triangle inequality involving once more \eqref{ee2}, produces \eqref{SpecialCase} for $R:=2^{m_0}$ (the general case $R \geq 1$ being a consequence of it).
\end{proof}

\section{Proof of Lemma \ref{LemRegUbar}}\label{Sec:ProofLemmas}

The proof is standard and relies on ideas from \cite[Prop. 2.1]{Vogelius_2000}:
\begin{proof}[Proof of Lemma \ref{LemRegUbar}] 
We adapt the classical argument to our situation with the interface:

\paragraph{Step 1: Caccioppoli estimate} We first notice that, for any $x_0 \in \R^d$ and $R>0$, if $\overline{u} \in \HH^1(\Boule (x_0, R))$ is $\overline{a}$-harmonic, then we have that 
\begin{align}
\label{Cacc}
\fint_{\Boule(x_0,R/2) } \lt| \nabla \overline{u} \rt|^2 \lesssim_{d, \lambda} \left( \frac{1}{R} \right)^{2} \fint_{\Boule(x_0,R)}  \lt| \overline{u} - \fint_{\Boule(x_0,R)} \overline{u} \rt|^2.
\end{align}

We include the argument of \eqref{Cacc} for completeness. 
Without loss of generality, we assume that $\overline{u}$ is of zero mean in $\Boule(x_0,R)$.
Let $\eta$ be a smooth cut-off of $\Boule(x_0, R/2)$ in $\Boule(x_0, R)$ such that $|\nabla \eta | \lesssim \frac{1}{R}$ and test the equation satisfied by $\overline{u}$ with $\eta^2 \overline{u}$. This yields 
\begin{align}
\int_{\Boule(x_0, R)} ( \eta^2 \nabla \overline{u}  + 2 \eta \nabla \eta \overline{u}) \cdot a \nabla \overline{u} = 0,
\end{align}
which, after an application of Young's inequality and using the properties of $\eta$ and $a$,  yields \eqref{Cacc}.

\paragraph{Step 2: Iteration of the Caccioppoli estimate} 
We now show that, for $x \in \Boule(x_0,1-\rho)$ and $n \in \mathbb{N}$, the following estimate holds for any multi-index $\beta \in \mathbb{N}^d$ such that $|\beta|=n\geq 1$:
\begin{align}
\label{lemma_1_8}
\int_{\Boule(x, \rho/2) \setminus \mathcal{I}} | \partial^\beta \overline{u} |^2 \lesssim_{d, \lambda,n} \left( \frac{1}{\rho} \right)^{2(n-1)} \int_{\Boule(x,\rho)}  | \nabla \overline{u} |^2.
\end{align}
Notice that combining \eqref{lemma_1_8} with Morrey's inequality and a covering argument gives that
\begin{align}
\label{lemma_1_4.5}
\displaystyle\sup_{x \in \Boule(x_0, 1 - \rho) \setminus \mathcal{I}} \rho^2 |\nabla^2 \overline{u}(x)|^2 + \displaystyle\sup_{x \in \Boule(x_0, 1 - \rho) \setminus \mathcal{I}}  |\nabla \overline{u}(x)|^2  \lesssim  \rho^{-d} \fint_{\Boule(x_0, 1) }|\nabla \overline{u}|^2 .
\end{align}
Since the terms on the left-hand side of \eqref{lemma_1_4.5} are treated in the same way, we concentrate on the $| \nabla^2 \overline{u}|^2$-term and notice that 
\begin{align*}
\sup_{x \in \Boule(x_0, 1 - \rho) \setminus \mathcal{I}} |\nabla^2 \overline{u}(x)|^2
& \lesssim \rho^{- (d + 2)} \int_{\Boule(x_0, 1)} |\nabla \overline{u}|^2.
\end{align*}
In the above line, we used the estimate
\begin{align*}
\sup_{y \in \Boule(x, \rho/2) \setminus \mathcal{I} }  |\nabla^2 \overline{u}(y)|^2 \lesssim_{d,\lambda} \rho^{-d- 2} \int_{\Boule(x,\rho)} |\nabla \overline{u}|^2,
\end{align*}
which may be obtained  first in the ball of radius $1$ by means of \eqref{lemma_1_8} (with $\rho:=1/2$), and then in the ball of radius $\rho$ by a rescaling argument.

Our argument for \eqref{lemma_1_8} is inductive: Notice that the case $n=1$ is trivial. 
Next, since $\overline{a}$-harmonicity is equivalent to $\overline{a}_{\pm}$-harmonicity to the left and right of the interface respectively complemented with transmission condition, we find that $\partial^{\alpha} \overline{u} \in \HH^1(\Boule(x, \rho))$ is still $\overline{a}$-harmonic for any multi-index such that $\alpha_1 = 0$. Applying \eqref{Cacc} to these $\partial^{\alpha} \overline{u}$, we inductively obtain that 
\begin{equation}
\label{lemma_1_5}
\begin{aligned}
\int_{\Boule(x, \rho/4)} | \nabla \partial^{\alpha} \overline{u} |^2  
\lesssim& \frac{1}{\rho^2} \int_{\Boule(x, \rho/2)} |  \partial^{\alpha}  \overline{u} |^2
\lesssim \left( \frac{1}{\rho} \right)^{2n} \int_{\Boule(x,\rho)}  | \nabla \overline{u} |^2.
\end{aligned}
\end{equation}
This observation establishes \eqref{lemma_1_8}, for any multi-index $\beta$ such that $\beta_1\leq 1$.

We still have to increase $\beta_1$.
For this we can use the equation satisfied by $\overline{u}$ in $\Boule(x, \rho)$ off of the interface.
This implies that the following relation holds in $\Boule(x, \rho) \setminus \mathcal{I}$
\begin{align}
\label{lemma_1_6}
\partial^\beta \partial^{n+2}_1  \overline{u} = \frac{-1}{\overline{a}_{11}} \displaystyle\sum_{(i, j) \neq (1 , 1) }  \overline{a}_{i j} \partial^\beta \partial^{n}_1\partial_{i} \partial_{j} \overline{u}.
\end{align}
Using once more an inductive argument, we finally obtain \eqref{lemma_1_8} for any multi-index $\beta \in \mathbb{N}^d$.

\paragraph{Step 3: Harmonic coordinates} Going from \eqref{lemma_1_4.5} to the desired \eqref{BorneNabla2} is a simple matter of using \eqref{DefUUU}. In particular, we use that, due to \eqref{DefPj}, $(\nabla P)^{-1}$ is bounded off of the interface and, since $\overline{\nabla} \overline{u}$ satisfies the transmission condition through the interface, it is continuous.
\end{proof} 

\end{document}